\theoremstyle{plain}
\newtheorem{theorem}{Theorem}[section]
\newtheorem{proposition}[theorem]{Proposition}
\newtheorem{lemma}[theorem]{Lemma}
\newtheorem{corollary}[theorem]{Corollary}
\newtheorem*{theorem2}{Theorem}
\theoremstyle{definition}
\newtheorem{definition}[theorem]{Definition}
\newtheorem{assumption}[theorem]{Assumption}
\newtheorem{remark}[theorem]{Remark}
\newtheorem*{assumption2}{Assumption}
\numberwithin{equation}{section}
\newcommand{\abs}[1]{\lvert{#1}\rvert}
\newcommand{\norm}[1]{\lVert{#1}\rVert}
\newcommand{\ip}[2]{\langle{#1},{#2}\rangle}
\DeclareMathOperator{\ran}{Ran}
\DeclareMathOperator*{\esup}{ess\ sup}
\DeclareMathOperator{\supp}{supp}
\DeclareMathOperator{\haus}{d_H}
\newcommand{\hausK}{\mathop{\textup{d}_{\textup{H},K}}}
\DeclareMathOperator{\dist}{dist}
\DeclareMathOperator{\rank}{rank}
\DeclareMathOperator{\trace}{Tr}
\newcommand{\vp}{\varphi}
\newcommand{\bR}{\mathbf{R}}
\newcommand{\bC}{\mathbf{C}}
\newcommand{\cF}{\mathcal{F}}
\newcommand{\cH}{\mathcal{H}}
\newcommand{\cD}{\mathcal{D}}
\newcommand{\cB}{\mathcal{B}}
\newcommand{\cK}{\mathcal{K}}
\newcommand{\cS}{\mathcal{S}}
\newcommand{\cI}{\mathcal{I}}
\newcommand{\bZ}{\mathbf{Z}}
\newcommand{\bT}{\mathbf{T}}
\newcommand{\bN}{\mathbf{N}}
\newcommand{\sfF}{\mathsf{F}}
\newcommand{\e}{\mathrm{e}}
\newcommand{\I}{\mathrm{i}}
\newcommand{\di}{\,\mathrm{d}}
\newcommand{\ident}{\ensuremath{I}}
\newcommand{\identh}{\ensuremath{I_h}}
\long\def\MSC#1\EndMSC{\def\arg{#1}\ifx\arg\empty\relax\else
	{\narrower\noindent%
		\small\textbf{2020 Mathematics Subject Classification:} #1} \fi}
\long\def\KEY#1\EndKEY{\def\arg{#1}\ifx\arg\empty\relax\else
	{\narrower\noindent%
		\small\textbf{Keywords:} #1\\}\fi}
\date{}
\begin{document}
\title{Norm resolvent convergence of \\discretized Fourier multipliers}
\author{
Horia Cornean\footnote{Department of Mathematical Sciences, Aalborg University, Skjernvej 4A, 
DK-9220 Aalborg \O{}, Denmark. Email: cornean@math.aau.dk and matarne@math.aau.dk},\;
Henrik Garde\footnote{Department of Mathematics, Aarhus University, Ny Munkegade 118, DK-8000 Aarhus C, Denmark. Email: garde@math.au.dk},\; and
Arne Jensen\footnotemark[1]
}

\maketitle

\begin{abstract}
	\noindent We prove norm estimates for the difference of resolvents of operators and their discrete counterparts, embedded into the continuum using biorthogonal Riesz sequences. The estimates are given in the operator norm for operators on square integrable functions, and depend explicitly on the mesh size for the discrete operators. The operators are a sum of a Fourier multiplier and a multiplicative potential. The Fourier multipliers include the  fractional Laplacian and the pseudo-relativistic free Hamiltonian. The potentials are real, bounded, and H\"older continuous. As a side-product, the Hausdorff distance between the spectra of the resolvents of the continuous and discrete operators decays with the same rate in the mesh size as for the norm resolvent estimates. The same result holds for the spectra of the original operators in a local Hausdorff distance.
\end{abstract}

\KEY
norm resolvent convergence, Fourier multiplier, lattice, Hausdorff distance.
\EndKEY

\MSC
47A10, 
47A58, 
42B15, 
(%
47A11, 
47B39
).
\EndMSC

\section{Introduction} \label{sec:intro}
We start with some notation and definitions. Let  $h\bZ^d$ be lattices with mesh size $h>0$. Let
\begin{equation*}
\cH_h=\ell^2(h\bZ^d)\quad\text{with the norm}\quad
\norm{u_h}_{\ell^2(h\bZ^d)}
=
\bigl(h^d\sum_{n\in \bZ^d}\abs{u_h(n)}^2\bigr)^{1/2}.
\end{equation*}
Note that we use the index $n\in\bZ^d$ in the notation for $u_h\in\cH_h$. The dependence on the lattice parameter is given by the subscript $h$.

Write $\cH = \widehat{\cH}=L^2(\bR^d)$ and let
 $\cF\colon \cH \to \widehat{\cH}$ be the Fourier transform given by
\begin{equation*}
(\cF f)(\xi)=(2\pi)^{-d/2}\int_{\bR^d}\e^{-\I x\cdot\xi}f(x)\di x, \quad \xi\in\bR^d,
\end{equation*}
with adjoint $\cF^{\ast}\colon \widehat{\cH}\to\cH$. Moreover, for $f\in \cH$ we denote its Fourier transform by $\widehat{f}$. Let $\bT^d_h=[-\pi/h,\pi/h]^d$ and $\widehat{\cH}_h=L^2(\bT^d_h)$. The discrete Fourier transform 
$\sfF_h\colon \cH_h\to\widehat{\cH}_h$ and its adjoint 
$\sfF_h^{\ast}\colon\widehat{\cH}_h\to\cH_h$ are given by
\begin{align*}
(\sfF_h u_h)(\xi)
&=\frac{h^d}{(2\pi)^{d/2}}\sum_{n\in\bZ^d}u_h(n)\e^{-\I hn\cdot\xi},\quad \xi\in\bT^d_h,\\
(\sfF_h^{\ast}g)(n)&=\frac{1}{(2\pi)^{d/2}}\int_{\bT^d_h}\e^{\I hn\cdot\xi}g(\xi)\di\xi,\quad  n\in\bZ^d.
\end{align*} 
Normalization is chosen to ensure that $\cF$ and $\sfF_h$ are unitary operators.
 
Let $\vp_0,\psi_0\in \cH$ and define
\begin{equation*}
\vp_{h,k}(x)=\vp_0((x-hk)/h),\quad \psi_{h,k}(x)=\psi_0((x-hk)/h),
\quad h>0, \quad k\in\bZ^d.
\end{equation*}
We assume that the sequences $\{\vp_{1,k}\}_{k\in\bZ^d}$ and
$\{\psi_{1,k}\}_{k\in\bZ^d}$ are biorthogonal Riesz sequences in $\cH$. See section~\ref{sec:biorth} for details.
Embedding operators $J_h\colon\cH_h\to\cH$ and 
$\widetilde{J}_h\colon\cH_h\to\cH$ are defined as
\begin{equation*}
J_hu_h=\sum_{k\in\bZ^d}u_h(k)\vp_{h,k},\quad
\widetilde{J}_hu_h=\sum_{k\in\bZ^d}u_h(k)\psi_{h,k},\quad u_h\in\cH_h.
\end{equation*}
The discretization operators $K_h\colon\cH\to\cH_h$ are given by $K_h=(\widetilde{J}_h)^{\ast}$. Explicitly
\begin{equation*}
(K_hf)(k)=\frac{1}{h^d}\ip{\psi_{h,k}}{f},\quad f\in\cH,\quad
k\in\bZ^d.
\end{equation*}
We have $K_hJ_h=\identh$, the identity on $\cH_h$, and $J_hK_h$ a projection in $\cH$ onto $J_h\cH_h$.

We consider Fourier multipliers on $\cH$. Several different classes of multipliers are considered, each characterized by a list of assumptions. One of the classes is described by the following list of assumptions:
\begin{assumption2}
Assume 
\begin{equation*}
\alpha>\tfrac12,\quad \beta>-\tfrac12,\quad\text{and}\quad
\alpha\leq 1+\beta<2\alpha \leq 3+\beta.
\end{equation*}
Let $G_0\in C^1(\bR^d)$ be a real-valued and non-negative function. Assume 
\begin{enumerate}[\ \ (1)]
\item $G_0(0)=0$,
\item there exist $c>0$ and $c_0>0$ such that $G_0(\xi)\geq c\abs{\xi}^{\alpha}$, for all $\xi\in\bR^d$ with $\abs{\xi}\geq c_0$,
\item $\abs{\nabla G_0(\xi)}\leq c \abs{\xi}^{\beta}$, 
for all $\xi\in\bR^d$,
\item $G_0(\xi_1,\xi_2,\ldots,\xi_d) = G_0(\abs{\xi_1},\abs{\xi_2},\ldots,\abs{\xi_d})$, for all $\xi\in\bR^d$.
\end{enumerate}

We define the discretization of $G_0$ in Fourier space by
\begin{equation*}
G_{0,h}(\xi)=G_0(\tfrac{2}{h}\sin(\tfrac{h}{2}\xi_1),
\tfrac{2}{h}\sin(\tfrac{h}{2}\xi_2),\ldots,
\tfrac{2}{h}\sin(\tfrac{h}{2}\xi_d)), \quad h>0, \quad \xi\in\bR^d.
\end{equation*}
\end{assumption2}

\noindent
Using the Fourier transforms we then define the Fourier multipliers
\begin{equation*}
H_0=\cF^{\ast}G_0(\cdot)\cF\quad\text{on $\cH$}
\end{equation*}
and
\begin{equation*}
H_{0,h}=\sfF_h^{\ast}G_{0,h}(\cdot)\sfF_h
\quad\text{on $\cH_h$}.
\end{equation*}
Here $G_0(\cdot)$ and $G_{0,h}(\cdot)$ denote the operators of multiplication by the function $G_0$ and the function $G_{0,h}$, respectively.

Let $V$ be a real-valued, bounded, and H\"{o}lder continuous function of order $\theta$. We discretize $V$ as $V_h$ by $V_h(k)=V(hk)$, $k\in\bZ^d$. Let $H=H_0+V$ on $\cH$ and $H_h=H_{0,h}+V_h$ on $\cH_h$. Under the assumption above one of the main results can be stated as follows. Here $\theta'$  is defined in \eqref{theta'} and is only present for $V\not\equiv 0$.
\begin{theorem2}
Let $K\subset\bC\setminus\bR$ be compact. Let
\begin{equation*}
\gamma=\min\{2\alpha-1,2\alpha-\beta-1,\theta'\}.
\end{equation*}
Then there exist $C>0$ and $h_0>0$ such that
\begin{equation*}
\norm{J_h(H_h-z\identh)^{-1}K_h-(H-z\ident)^{-1}}_{\cB(\cH)}
\leq Ch^{\gamma}
\end{equation*}
for all $z\in K$ and all $h$ with $0<h\leq h_0$.
\end{theorem2}
Thus we have norm resolvent convergence with an explicit rate of convergence, depending on the parameters in the assumptions. The complete results, also including two other classes of Fourier multipliers, are stated in Theorem~\ref{thm54}.

The norm resolvent estimates imply some spectral results. For example in Theorem~\ref{resolventest}, for $\mu>0$ large enough we get the estimate
\begin{equation*} 
\haus\bigl(\sigma((H_h+\mu\identh)^{-1}),\sigma((H+\mu\ident)^{-1})\bigr)\leq C h^{\gamma},
\end{equation*}
where $\haus$ denotes the Hausdorff distance. 
The same result holds for the spectra of the original operators in a local Hausdorff distance, see Theorem~\ref{localest}.
In Theorem~\ref{eigen-thm} we give results on the relation between discrete eigenvalues of $H$ and $H_h$.

The choice of the discretization $G_{0,h}$ 
is motivated by the functional
calculus for self-adjoint operators. To explain this, write $-\Delta$ for the Laplacian on $\cH$ and $-\Delta_h$ for the standard discrete Laplacian given by
\begin{equation*}
(-\Delta_hu_h)(k)=h^{-2}\sum_{i=1}^d\bigl(2u_h(k)-u_h(k+e_i)
-u_h(k-e_i)\bigr),\quad k\in \bZ^d,\quad u_h\in\cH_h.
\end{equation*}
Here $e_i$, $i=1,2,\ldots,d$, are the canonical basis vectors in $\bR^d$.

Let $G_0(\xi)=g_0(\abs{\xi}^2)$, where $g_0\colon[0,\infty)\to\bR$ is a given function. Then $H_0$ defined above equals the operator $g_0(-\Delta)$ obtained from the functional calculus for $-\Delta$, and $H_{0,h}$ defined above equals the operator $g_0(-\Delta_h)$ obtained from the functional calculus for $-\Delta_h$.

Our results cover some operators $H_0$ of interest in mathematical physics: 
\begin{itemize}
\item The fractional Laplacian $(-\Delta)^{s/2}$, $0<s<2$. 
The result for $0<s\leq1$ is covered by case (iii) in Theorem~\ref{thm54}.
For $1<s<2$ the above assumptions are satisfied with $\alpha=s$ and $\beta=s-1$. In both cases the rate is $h^s$.

\item The Laplacian $-\Delta$ and the bi-Laplacian $(-\Delta)^2$, and more generally $(-\Delta)^{s/2}$, $s\geq2$. The above assumptions are satisfied with $\alpha=(s+2)/2$ and $\beta=s-1$. The rate is $h^2$ for $s\geq2$.

\item The pseudo-relativistic free Hamiltonian $\sqrt{-\Delta+m^2}-m$, $m\geq0$. We obtain a rate of $h$, using $\alpha = 1$ and $\beta = 0$.
\end{itemize}
Moreover, for $V\not\equiv 0$ these rates are at most $h^{\theta'}$.

Concerning previous results,  Nakamura and Tadano~\cite{NT} were the  first authors to obtain norm resolvent convergence for the Laplacian plus a potential and its discretization, as far as we know. They use an orthonormal sequence to define embedding and discretization operators. A noteworthy difference, however, is that they consider a larger class of potentials $V$ that are allowed to be unbounded from above. The proof in \cite{NT} for unbounded $V$ cannot directly be adapted to the general class of $H_0$ in our setting. For specific choices of $G_0$ this might be possible, but it seems that no general statement covering the class of $H_0$ considered here can be obtained for a class of unbounded potentials. The underlying technical issue is that such a $V$ is not $H_0$-bounded, but in case of the Laplacian it is $H$-bounded; see e.g.\ the proofs of \cite[Lemmas~2.4 and 2.5]{NT} involving the specific structure of the Laplacian.

Many papers prove strong or weak convergence of solutions to equations involving the Laplacian or the fractional Laplacian, often in the context of non-linear equations. See for example the paper by Hong and Yang~\cite{HoYa19} and the references therein. 

Bambusi and Penati~\cite{BP} approximate a class of non-linear Schr\"{o}dinger equations in dimensions one and two by discrete non-linear Schr\"{o}dinger equations using a finite element method. With these results they prove existence of soliton-type solutions to the discretized equations based on known existence results for the continuous equations.

Rabinovich~\cite{R} proves results on the relation between the spectra of Schr\"{o}dinger operators on $h\bZ^d$ and on $\bR^d$. We comment on one of these results in Remark~\ref{rmk57}.

In~\cite{IJ} strong convergence results are obtained for solutions to discretized Schr\"{o}dinger equations on a number of different lattices. The results imply strong convergence of scattering operators.

We end this discussion by mentioning that the symmetry assumption which requires $G_0$ to depend on the absolute value of each $\xi_j$ excludes first order Fourier multipliers of Dirac type. This case, together with models involving constant magnetic fields, will be considered in a future work. 

The rest of this article is organized as follows. In section~\ref{sec:biorth} we define the operators $J_h$ and $K_h$ using biorthogonal Riesz sequences with certain support properties. We also give constructive examples of such biorthogonal Riesz sequences satisfying the assumptions required by $J_h$ and $K_h$. In section~\ref{sec:fouriermult} we introduce the assumptions on the Fourier multipliers, and give estimates of differences of resolvents in Propositions~\ref{prop-77}, \ref{prop126}, and \ref{prop410}. Finally in section~\ref{sec:withpotential} we also add a potential $V$, and collect the general norm resolvent estimates in Theorem~\ref{thm54}. Section~\ref{sec:spectral} is dedicated to spectral estimates.

\section{Biorthogonal systems} \label{sec:biorth}
We use biorthogonal Riesz sequences to define the embedding operators $J_h$ and the discretization operators $K_h$.

\begin{definition}
Let $\{u_k\}_{k\in\bZ^d}$ and $\{v_k\}_{k\in\bZ^d}$ be two sequences in a Hilbert space $\cK$. They are said to be biorthogonal, if we have
\begin{equation}\label{biorth}
\ip{u_k}{v_n}=\delta_{k,n}, \quad k,n\in\bZ^d,
\end{equation}
where $\delta_{k,n}$ is Kronecker's delta.
\end{definition}
We recall the definition of a Riesz sequence (see e.g.~\cite[Section 3.6]{OC}). 
\begin{definition}\label{def-62}
A sequence $\{u_k\}_{k\in\bZ^d}$ in a Hilbert space $\cK$ is called a Riesz sequence if there exist $A>0$ and $B>0$ such that
\begin{equation}\label{riesz}
A\sum_{k\in\bZ^d}\abs{c_k}^2\leq\norm{\sum_{k\in\bZ^d}c_ku_k}^2
\leq B\sum_{k\in\bZ^d}\abs{c_k}^2
\end{equation}
for all $\{c_k\}_{k\in\bZ^d}\in\ell^2(\bZ^d)$.
\end{definition}

Let $\vp_0,\psi_0\in \cH$. Define 
\begin{equation}\label{scaling}
\vp_{h,k}(x)=\vp_0((x-hk)/h),\quad \psi_{h,k}(x)=\psi_0((x-hk)/h),
\quad h>0, \quad k\in\bZ^d.
\end{equation}

\begin{assumption}\label{assume-63}
Assume that the sequences $\{\vp_{1,k}\}_{k\in\bZ^d}$ and
$\{\psi_{1,k}\}_{k\in\bZ^d}$ are biorthogonal Riesz sequences in $\cH$.
\end{assumption}

Note that \eqref{biorth} and \eqref{scaling} imply that
$\{h^{-d/2}\vp_{h,k}\}_{k\in\bZ^d}$ and
$\{h^{-d/2}\psi_{h,k}\}_{k\in\bZ^d}$ are biorthogonal sequences.
They are also Riesz sequences. After a change of variables we get from \eqref{riesz} the estimates
\begin{align}
A_{\vp}h^d\sum_{k\in\bZ^d}\abs{c_k}^2&\leq
\norm{\sum_{k\in\bZ^d}c_k\vp_{h,k}}^2
\leq B_{\vp}h^d\sum_{k\in\bZ^d}\abs{c_k}^2,\label{riesz-h-1}
\\
A_{\psi}h^d\sum_{k\in\bZ^d}\abs{c_k}^2&\leq
\norm{\sum_{k\in\bZ^d}c_k\psi_{h,k}}^2
\leq B_{\psi}h^d\sum_{k\in\bZ^d}\abs{c_k}^2. \notag
\end{align}

The embedding operators $J_h\colon\cH_h\to\cH$ are defined by
\begin{equation*}
J_hu_h=\sum_{k\in\bZ^d}u_h(k)\vp_{h,k},\quad u_h\in\cH_h.
\end{equation*}
The estimate \eqref{riesz-h-1} implies that $J_h$ is a well defined bounded operator from $\cH_h$ to $\cH$. Furthermore, the estimate \eqref{riesz-h-1} implies
\begin{equation}
\norm{J_h}_{\cB(\cH_h,\cH)}\leq \sqrt{B_{\vp}}\quad \text{for all $h>0$}.\label{J-norm}
\end{equation}

Let
\begin{equation*}
\widetilde{J}_hu_h=\sum_{k\in\bZ^d}u_h(k)\psi_{h,k},\quad u_h\in\cH_h.
\end{equation*}
It is also a well defined bounded operator from $\cH_h$ to $\cH$ and satisfies $\norm{\widetilde{J_h}}_{\cB(\cH_h,\cH)}\leq \sqrt{B_{\psi}}$ for all $h>0$.

The discretization operators are defined as $K_h=(\widetilde{J}_h)^{\ast}$. They are bounded operators from $\cH$ to $\cH_h$, and are given by
\begin{equation*}
(K_hf)(k)=\frac{1}{h^d}\ip{\psi_{h,k}}{f}, \quad k\in\bZ^d.
\end{equation*}
They satisfy 
\begin{equation}
\norm{K_h}_{\cB(\cH,\cH_h)}\leq \sqrt{B_{\psi}}\quad \text{for all $h>0$}.
\label{K-norm}
\end{equation}
Due to biorthogonality we have that
\begin{equation}\label{KhJh}
K_hJ_h=\identh,
\end{equation}
where $\identh$ is the identity on $\cH_h$. Thus $J_h$ is injective and $K_h$ is surjective.

We also have 
\begin{equation*}
J_hK_hf = \sum_{k\in\bZ^d}\frac{1}{h^d}\ip{\psi_{h,k}}{f}\vp_{h,k}
\end{equation*} 
and the biorthogonality condition (or \eqref{KhJh}) implies that $J_hK_h$ is a projection, although not necessarily an orthogonal projection.

\begin{lemma} \label{lemma:proj}
	Let the sequences $\{\vp_{1,k}\}_{k\in\bZ^d}$ and
	$\{\psi_{1,k}\}_{k\in\bZ^d}$ satisfy Assumption~{\rm\ref{assume-63}}. Then $J_hK_h\neq I$ for all $h>0$.
\end{lemma}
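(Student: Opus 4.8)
The statement to prove is that $J_hK_h \neq I$ on $\cH = L^2(\bR^d)$ for every $h > 0$. Equivalently, since $J_hK_h$ is already known to be a (not necessarily orthogonal) projection onto the range $J_h\cH_h$, it suffices to show this range is a proper subspace of $\cH$, i.e. $J_h$ is not surjective. By the scaling relation \eqref{scaling} it is enough to treat $h=1$; indeed, the dilation $u(x)\mapsto h^{-d/2}u(x/h)$ is unitary on $\cH$ and intertwines $J_1$ with (a constant multiple of) $J_h$, so surjectivity of $J_h$ for one $h$ is equivalent to surjectivity of $J_1$. So I would fix $h=1$ and argue that $\overline{\myspan}\{\vp_{1,k}\}_{k\in\bZ^d} \neq \cH$.

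The plan is to exploit the Fourier transform. Write $\widehat{\vp}_{1,k}(\xi) = \e^{-\I k\cdot\xi}\widehat{\vp}_0(\xi)$, so that $\cF(J_1\cH_1)$ consists of functions of the form $m(\xi)\widehat{\vp}_0(\xi)$ where $m$ ranges over (a subspace of) $L^2$-functions with $2\pi$-periodic structure — more precisely, $J_1 u = \cF^{\ast}\bigl((\sum_k u(k)\e^{-\I k\cdot\xi})\,\widehat{\vp}_0(\xi)\bigr)$, and $\sum_k u(k)\e^{-\I k\cdot\xi}$ is an arbitrary element of $L^2(\bT^d)$ by Plancherel on the torus (the Riesz sequence condition \eqref{riesz} guarantees the map $u_h \mapsto \sum_k u_h(k)\vp_{1,k}$ is bounded below and above, consistent with this). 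The key structural fact is then the periodization identity: for $f \in \cH$, $f \perp \vp_{1,k}$ for all $k$ iff the periodized function $\Phi(\xi) := \sum_{n\in\bZ^d}\widehat{f}(\xi+2\pi n)\overline{\widehat{\vp}_0(\xi+2\pi n)}$ vanishes a.e. on $\bT^d$. To produce a nonzero such $f$, I would use that the upper Riesz bound forces $\sum_{n}\abs{\widehat{\vp}_0(\xi+2\pi n)}^2 \le B_{\vp}$ a.e. (a standard consequence of \eqref{riesz}), hence $\widehat{\vp}_0 \in L^\infty$ is impossible to have "full support with a periodic lower bound" unless... — actually the cleanest route: the lower Riesz bound gives $\sum_n \abs{\widehat{\vp}_0(\xi+2\pi n)}^2 \ge A_{\vp} > 0$ a.e., but since $\widehat{\vp}_0 \in L^2(\bR^d)$, it cannot be bounded below on any set of infinite measure; I would instead find a fixed bounded box $Q$ in frequency space together with a translate $Q + 2\pi n_0$ on which $\widehat{\vp}_0$ has a definite amount of mass, and construct $\widehat{f}$ supported on $Q \cup (Q+2\pi n_0)$, with its two pieces chosen so that the periodization $\Phi$ cancels — a two-dimensional linear-algebra argument in each fiber. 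Any nonzero $\widehat{f}$ of this form gives $f \perp J_1\cH_1$, so $J_1K_1 \neq I$.

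The main obstacle is making the cancellation construction rigorous and genuinely nonzero: one must rule out the degenerate possibility that $\widehat{\vp}_0(\xi+2\pi n)=0$ for all $n\neq 0$ (a.e. $\xi$), i.e. that $\widehat{\vp}_0$ is supported in a single fundamental domain. But that degenerate case is in fact the easiest: if $\supp\widehat{\vp}_0$ were essentially contained in one period cell, then $\myspan\{\vp_{1,k}\}$ would be contained in the shift-invariant space $\cF^{\ast}(L^2(\bT^d)\cdot\widehat{\vp}_0)$ whose closure is $\cF^{\ast}\{g : \supp g \subseteq \supp\widehat{\vp}_0\}$, manifestly proper in $L^2(\bR^d)$ since $\widehat{\vp}_0 \in L^2$ cannot have full-measure support combined with the essential-boundedness $\sum_n|\widehat{\vp}_0(\xi+2\pi n)|^2 \le B_\vp$; concretely $\supp\widehat{\vp}_0$ then has finite measure, so its complement is nonempty and supports a nonzero $\widehat f$. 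In the non-degenerate case I pick $n_0 \neq 0$ with $\widehat{\vp}_0(\cdot)\,\widehat{\vp}_0(\cdot+2\pi n_0) \not\equiv 0$ and do the fiberwise cancellation on the overlap set. Either way one obtains $J_h\cH_h \subsetneq \cH$, hence $J_hK_h \neq I$, which is the claim. I would also remark that this shows $J_hK_h$, being a projection that is not the identity on the infinite-dimensional space $\cH$, is a nontrivial idempotent, consistent with it generally not being orthogonal.
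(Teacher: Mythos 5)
Your plan is correct, and it reaches the conclusion by a route dual to the paper's. The paper shows that $K_h$ is \emph{not injective}: working in Fourier variables with $d=1$, $h=1$, it defines $\widehat{f}$ on the cell translated by $2\pi(2k)$ to be $\overline{\widehat{\psi}_0(\cdot+2\pi(2k+1))}$ and on the cell translated by $2\pi(2k+1)$ to be $-\overline{\widehat{\psi}_0(\cdot+2\pi(2k))}$, so that the periodized sum $\sum_n\widehat{f}(\xi+2\pi n)\overline{\widehat{\psi}_0(\xi+2\pi n)}$ cancels in consecutive pairs; hence $K_1f=0$ while $f\neq0$ because $\psi_0\neq0$. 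You instead show that $J_h$ is \emph{not surjective} by producing a nonzero $f$ orthogonal to every $\vp_{1,k}$, via the same periodization identity applied to $\widehat{\vp}_0$. Either implication suffices (if $J_hK_h=I$ then $K_h$ is injective and $J_h$ is surjective), and the underlying fact — the integer translates of a single $L^2$ function are never complete — is identical; the paper's global alternating-sign assignment is simply a cleaner way to achieve the cancellation than your local two-box construction, since it needs no case distinction. Two points to tighten if you write this up: the negation of ``there exists $n_0\neq0$ with $\widehat{\vp}_0(\cdot)\widehat{\vp}_0(\cdot+2\pi n_0)\not\equiv0$'' is that the translated supports are pairwise essentially disjoint, which is weaker than $\widehat{\vp}_0$ being supported in a single period cell — but your finite-measure argument still applies, since pairwise disjointness forces $\abs{\supp\widehat{\vp}_0}\leq(2\pi)^d$, so the complement carries a nonzero $\widehat{f}$. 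And in the non-degenerate case you should first shrink the overlap set to a positive-measure subset of one period cell so that it is disjoint from its translate by $2\pi n_0$; then the two-piece definition of $\widehat{f}$ is consistent and the fiberwise cancellation is exactly the two-term version of the paper's pairwise cancellation.
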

\begin{proof}
We start with the case $d=1$ and $h=1$. We will find $f\in\cH$ such that $f\neq0$ and $K_hf=0$. We define $f$ via the Fourier transform by
	\begin{align*}
		\widehat{f}(\xi+2\pi(2k))&= \overline{\widehat{\psi}_0(
		\xi+2\pi(2k+1))},\\
		\widehat{f}(\xi+2\pi(2k+1))&= -\overline{\widehat{\psi}_0(
		\xi+2\pi(2k))},
	\end{align*}
	for $\xi\in[-\pi,\pi)$ and $k\in\bZ$.
	This $f$ then satisfies
	\begin{equation*}
		\sum_{n\in\bZ}\widehat{f}(\xi+2\pi n)\overline{\widehat{\psi}_0(
		\xi+2\pi n)}=0
	\end{equation*}
	for a.e.\ $\xi\in\bR$. 
	
	Then
	\begin{equation*}
		\ip{\psi_{1,j}}{f}=\int_{\bR}\widehat{f}(\xi)\overline{\widehat{\psi}_0(\xi)}\e^{\I j\xi}\di\xi
		=\int_{-\pi}^{\pi}\Bigl(
		\sum_{n\in\bZ}\widehat{f}(\xi+2\pi n)\overline{\widehat{\psi}_0(
		\xi+2\pi n)}\Bigr)\e^{\I j\xi}\di\xi=0
	\end{equation*}
	for all $j\in\bZ$, implying that $K_1f=0$. By construction we have $f\neq0$. For general $h>0$ the result follows by a change of variables. The case $d>1$ follows by
applying the above argument to the variable $\xi_1$.
\end{proof}

We state the following elementary result from operator theory. For the reader's convenience we include a short proof.
The setting is as follows.
Let $\cK_1$ and $\cK_2$ be Hilbert spaces. Let $S\in\cB(\cK_1,\cK_2)$ and $T\in\cB(\cK_2,\cK_1)$ and assume that $TS=I_{\cK_1}$, the identity on $\cK_1$. Then $P=ST$ is a projection onto the closed subspace $\ran S$.

\begin{lemma}\label{lemma-spect2}
Let $A\in\cB(\cK_1)$. Then the following results hold.
\begin{itemize}
\item[\rm(i)] Assume $P=I_{\cK_2}$. Then
\begin{equation*}
\sigma(SAT)=\sigma(A).
\end{equation*}
\item[\rm(ii)] Assume $P\neq I_{\cK_2}$. Then
\begin{equation*}
\sigma(SAT)=\sigma(A)\cup\{0\}.
\end{equation*}
\end{itemize}
\end{lemma}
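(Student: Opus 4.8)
The plan is to reduce both parts to the spectral mapping philosophy by exploiting the relation $TS = I_{\cK_1}$ and $P = ST$, treating $SAT$ as an operator on $\cK_2$ that is block-diagonal with respect to the (not necessarily orthogonal) decomposition $\cK_2 = \ran S \oplus \Ker T$. First I would record the algebraic identities: since $TS = I_{\cK_1}$, we have $P^2 = STST = S(TS)T = ST = P$, so $P$ is idempotent; moreover $SAT\,P = SAT\,ST = SA(TS)T = SAT$ and $P\,SAT = ST\,SAT = S(TS)AT = SAT$, so $SAT$ commutes with $P$ and vanishes on $\Ker P = \Ker T$ (because $SAT$ factors through $T$). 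Hence $SAT$ leaves both $\ran P = \ran S$ and $\Ker P$ invariant, acting as $0$ on the latter.

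The key computational step is to identify the action of $SAT$ on the invariant subspace $\ran S$. Here I would use that $S\colon \cK_1 \to \ran S$ is a bijection with inverse $T|_{\ran S}$ (indeed $T S = I_{\cK_1}$, and for $y = Sx \in \ran S$ we have $Ty = TSx = x$). Therefore, under this conjugation, $SAT$ restricted to $\ran S$ is similar to $A$ on $\cK_1$: explicitly, $S^{-1}(SAT|_{\ran S})S = S^{-1}SA(TS) = A$, where $S^{-1}$ denotes the inverse of the bijection $S\colon\cK_1\to\ran S$. Consequently $\sigma(SAT|_{\ran S}) = \sigma(A)$. One must be slightly careful that $\ran S$ is closed (which it is, since $P = ST$ is a bounded idempotent, so $\ran P = \ran S = \Ker(I-P)$ is closed) so that $\ran S$ is itself a Banach/Hilbert space on which spectral theory applies, and that the similarity $S\colon\cK_1\to\ran S$ is bounded with bounded inverse $T|_{\ran S}$, which follows from boundedness of $S$ and $T$ and the open mapping theorem (or directly).

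Finally I would assemble the spectrum of $SAT$ on $\cK_2$ from its two invariant pieces. With respect to the topological direct sum $\cK_2 = \ran S \oplus \Ker T$, the operator $SAT$ is the direct sum of $SAT|_{\ran S}$ (spectrum $\sigma(A)$) and the zero operator on $\Ker T$ (spectrum $\{0\}$ if $\Ker T \neq \{0\}$, and empty/irrelevant if $\Ker T = \{0\}$). The spectrum of a direct sum over a topological direct-sum decomposition into closed invariant subspaces is the union of the spectra — this uses that $z - SAT$ is invertible on $\cK_2$ iff it is invertible on each summand, which in turn relies on the boundedness of the projection $P$. In case (i), $P = I_{\cK_2}$ forces $\Ker T = \{0\}$ (since $\Ker P = \Ker T$), so $\sigma(SAT) = \sigma(A)$. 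In case (ii), $P \neq I_{\cK_2}$ means $\Ker T = \Ker P \neq \{0\}$, so the zero operator genuinely contributes, giving $\sigma(SAT) = \sigma(A)\cup\{0\}$.

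The main obstacle I anticipate is purely a matter of bookkeeping rigor rather than deep content: namely, justifying carefully that $z-SAT$ is invertible on $\cK_2$ if and only if $z-A$ is invertible on $\cK_1$ and (in case (ii)) $z \neq 0$, handling the non-orthogonality of the projection $P$ correctly. Concretely, given invertibility of $z-A$ and $z\neq 0$, one constructs the inverse of $z-SAT$ explicitly as $\tfrac{1}{z}(I_{\cK_2}-P) + S(z-A)^{-1}T$ and verifies by direct multiplication (using $TS=I_{\cK_1}$, $P^2=P$, $PS=S$, $TP=T$) that this is a two-sided inverse; conversely, if $z-SAT$ is invertible one recovers invertibility of $z-A$ via $(z-A)^{-1} = T(z-SAT)^{-1}S$ (again a short computation) and, in case (ii), notes that $z=0$ is impossible because $SAT$ annihilates the nonzero subspace $\Ker T$. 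This explicit resolvent formula is the cleanest route and sidesteps any subtlety about direct-sum spectra; I would present it as the heart of the proof.
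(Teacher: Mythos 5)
Your proposal is correct and follows essentially the same route as the paper: decompose $\cK_2=\ran S\oplus\Ker P$, observe that $SAT$ is similar to $A$ on $\ran S$ via the bijection $S$ (with inverse $T|_{\ran S}$) and vanishes on $\Ker P=\Ker T$, and conclude that the spectrum is $\sigma(A)$, augmented by $\{0\}$ exactly when $P\neq I_{\cK_2}$. Your explicit resolvent formula $\tfrac{1}{z}(I_{\cK_2}-P)+S(z-A)^{-1}T$ and the converse identity $(z-A)^{-1}=T(z-SAT)^{-1}S$ supply, in a clean and verifiable form, the details that the paper compresses into ``the result follows.''
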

\begin{proof}
In case (i) the result is obvious, since $T=S^{-1}$. In case (ii)  $SAT$ restricted to $\ran S$ is a bounded operator in $\cB(\ran S)$ and the first case can be applied here. Since $\cK_2=P\cK_2\oplus(I_{\cK_2}-P)\cK_2$ and $(I_{\cK_2}-P)\cK_2$ is nontrivial, the result follows.
\end{proof}

We apply this lemma to $J_h$ and $K_h$ as defined above to get the following result, which relates the spectrum of an operator on the discrete space $\cH_h$ to the spectrum of its embedding in $\cH$.
\begin{proposition}\label{prop-spect}
Let $F_h\in\cB(\cH_h)$. Then
\begin{equation*}
\sigma(J_h F_h K_h)=\sigma(F_h)\cup	\{0\} \quad \text{for all } h>0.
\end{equation*}
\end{proposition}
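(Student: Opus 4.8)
The plan is to apply Lemma~\ref{lemma-spect2} directly, with $\cK_1=\cH_h$, $\cK_2=\cH$, $S=J_h$, $T=K_h$, and $A=F_h$. The two structural hypotheses needed to invoke that lemma are already in hand: the identity $TS=I_{\cK_1}$ is exactly $K_hJ_h=\identh$ from \eqref{KhJh}, so that $P:=ST=J_hK_h$ is automatically a projection onto $\ran J_h$, as the lemma requires. Thus the only thing left is to decide which of the two cases (i), (ii) is in force.

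To that end I would invoke Lemma~\ref{lemma:proj}, which asserts $J_hK_h\neq I$ for every $h>0$, i.e.\ $P\neq I_{\cH}$. Hence case (ii) of Lemma~\ref{lemma-spect2} applies and gives $\sigma(J_hF_hK_h)=\sigma(F_h)\cup\{0\}$. Since $h>0$ was arbitrary, this is precisely the assertion of the proposition.

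There is essentially no obstacle in this argument: the proposition is a repackaging of two facts already established earlier in the section. The only point worth stressing is that the appearance of $0$ in the conclusion genuinely relies on $P\neq I$ — from $K_hJ_h=\identh$ alone one could neither include nor exclude $0$ — which is exactly why Lemma~\ref{lemma:proj} is proved beforehand. That lemma, whose proof constructs a nonzero $f\in\cH$ with $K_hf=0$ by a periodization argument on the Fourier side, carries the real content; the present proposition then follows in a few lines.
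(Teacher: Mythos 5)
Your proposal is correct and is exactly the paper's argument: the paper's proof consists of the single sentence that the result follows from Lemmas~\ref{lemma:proj} and~\ref{lemma-spect2}, which is the combination you spell out (with $S=J_h$, $T=K_h$, $TS=\identh$ from \eqref{KhJh}, and $P=J_hK_h\neq I$ forcing case (ii)). No differences to report.
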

\begin{proof}
This result follows from Lemmas~\ref{lemma:proj} and~\ref{lemma-spect2}.
\end{proof}

The following result is well-known from wavelet theory, see e.g.~\cite{meyer}.
\begin{lemma}\label{lemma71}
Let $\varphi,\psi\in \cH$. Then the two systems
\begin{equation*} 
\{\varphi(\cdot-k)\}_{k\in\bZ^d}\quad\text{and}\quad 
\{\psi(\cdot-k)\}_{k\in\bZ^d}
\end{equation*}
are biorthogonal if and only if
\begin{equation}\label{BIO-cond}
\sum_{k\in\bZ^d}\overline{\widehat{\varphi}(\xi-2\pi k)}\widehat{\psi}(\xi-2\pi k)=(2\pi)^{-d}, \quad
\text{a.e. $\xi\in\bR^d$}.
\end{equation}
\end{lemma}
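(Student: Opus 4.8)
The plan is to use the Poisson summation / periodization computation standard in wavelet theory: the biorthogonality relation $\ip{\varphi(\cdot-k)}{\psi(\cdot-n)}=\delta_{k,n}$ depends only on $k-n$, so it suffices to compute the sequence $c_m=\ip{\varphi(\cdot-m)}{\psi}$ for $m\in\bZ^d$ and show that it equals $\delta_{m,0}$ precisely when \eqref{BIO-cond} holds. First I would rewrite $c_m$ using Parseval's identity and the fact that $\mathcal{F}(\psi(\cdot-m))(\xi)=\e^{-\I m\cdot\xi}\widehat{\psi}(\xi)$, giving
\begin{equation*}
c_m=\int_{\bR^d}\widehat{\varphi}(\xi)\overline{\widehat{\psi}(\xi)}\,\e^{\I m\cdot\xi}\di\xi.
\end{equation*}
Then I would break $\bR^d$ into the translated cubes $2\pi k+[-\pi,\pi)^d$, $k\in\bZ^d$, and use $2\pi$-periodicity of $\xi\mapsto\e^{\I m\cdot\xi}$ to collapse the sum over $k$ onto a single period:
\begin{equation*}
c_m=\int_{[-\pi,\pi)^d}\Bigl(\sum_{k\in\bZ^d}\widehat{\varphi}(\xi-2\pi k)\overline{\widehat{\psi}(\xi-2\pi k)}\Bigr)\e^{\I m\cdot\xi}\di\xi.
\end{equation*}

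With $F(\xi):=\sum_{k\in\bZ^d}\overline{\widehat{\varphi}(\xi-2\pi k)}\,\widehat{\psi}(\xi-2\pi k)$, the identity $c_m=\overline{\int_{[-\pi,\pi)^d}F(\xi)\e^{-\I m\cdot\xi}\di\xi}$ (up to conjugation bookkeeping) shows that $(c_m)_{m\in\bZ^d}$ is, up to a constant, the sequence of Fourier coefficients of the periodized function $F$. Since the exponentials $\{(2\pi)^{-d/2}\e^{\I m\cdot\xi}\}_{m\in\bZ^d}$ form an orthonormal basis of $L^2([-\pi,\pi)^d)$, the statement $c_m=\delta_{m,0}$ for all $m$ is equivalent to $F(\xi)=(2\pi)^{-d}$ for a.e.\ $\xi$, which is exactly \eqref{BIO-cond} (after matching $\overline{\widehat\varphi}\widehat\psi$ versus $\widehat\varphi\overline{\widehat\psi}$, which are complex conjugates, so their periodizations being constant are equivalent conditions). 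Finally I would note that $\ip{\varphi(\cdot-k)}{\psi(\cdot-n)}=c_{?}$ translates back to $\delta_{k,n}$, completing both implications simultaneously.

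The one technical point requiring care is the interchange of the sum over $k$ with the integral: one must justify that $\sum_k\int_{2\pi k+[-\pi,\pi)^d}|\widehat{\varphi}(\xi)\widehat{\psi}(\xi)|\di\xi=\int_{\bR^d}|\widehat\varphi\,\widehat\psi|<\infty$, which holds by Cauchy–Schwarz since $\widehat\varphi,\widehat\psi\in L^2$, so Fubini/dominated convergence applies and $F\in L^1([-\pi,\pi)^d)$. This is the only real obstacle, and it is mild; everything else is bookkeeping with periodization and the Fourier basis. Since the result is explicitly flagged as well-known (with a reference to \cite{meyer}), I would expect to keep the proof short, essentially just recording the periodization computation above.
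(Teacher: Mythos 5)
The paper itself offers no proof of this lemma; it simply records it as well known and cites Meyer. Your periodization argument is exactly the standard proof and it is correct: by unitarity of $\cF$ the inner products $\ip{\varphi(\cdot-k)}{\psi(\cdot-n)}$ depend only on $m=k-n$ and equal the $m$-th Fourier coefficient of the $2\pi\bZ^d$-periodization $F$ of $\overline{\widehat{\varphi}}\widehat{\psi}$, and the Cauchy--Schwarz/Fubini step you flag is indeed the only analytic point needed to justify the rearrangement and to place $F$ in $L^1([-\pi,\pi)^d)$. One small refinement: since $F$ is only guaranteed to lie in $L^1$ and not in $L^2$, the implication ``all $c_m=\delta_{m,0}$ $\Rightarrow$ $F=(2\pi)^{-d}$ a.e.'' should be justified by the uniqueness theorem for Fourier coefficients of $L^1$ functions on the torus (e.g.\ via Fej\'er means), rather than by the orthonormal-basis property of the exponentials in $L^2$; this is standard and does not affect the validity of the argument.
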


Recall that $\widehat{\cH}=\cF(\cH)=L^2(\bR^d)$ and 
$\widehat{\cH}_h=\sfF_h(\cH_h)=L^2(\bT_h^d)$. We start by transferring the maps $J_h$ and $K_h$ to the Fourier image spaces.
First note that
\begin{equation*}
(\cF \vp_{h,k})(\xi)=h^d\e^{-\I hk\cdot\xi}\widehat{\vp}_0(h\xi),\quad \xi\in\bR^d.
\end{equation*}
Thus for $u_h\in \cH_h$ we have
\begin{equation}\label{FJh}
(\cF J_h u_h)(\xi)=h^d\widehat{\vp}_0(h\xi)
\sum_{k\in\bZ^d}u_h(k)\e^{-\I hk\cdot\xi}
=(2\pi)^{d/2}\widehat{\vp}_0(h\xi)(\widetilde{\sfF_hu_h})(\xi), \quad \xi\in\bR^d,
\end{equation}
where $\widetilde{\sfF_hu_h}$ denotes the periodic extension of
$\sfF_hu_h$ from $\bT_h^d$ to $\bR^d$.

For $K_h$ we compute as follows. Let $f\in\cH$.
\begin{align*}
(K_hf)(k)&=\frac{1}{h^d}\ip{\psi_{h,k}}{f}\\
&=\frac{1}{h^d}\ip{\widehat{\psi}_{h,k}}{\widehat{f}}\\
&=\ip{\e^{-\I hk\cdot(\cdot)}\widehat{\psi}_0(h(\cdot))}{\widehat{f}}\\
&=\sum_{j\in\bZ^d}\int_{\bT^d_h-\frac{2\pi}{h}j}
\e^{\I hk\cdot\xi}\overline{\widehat{\psi}_0(h\xi)}\widehat{f}(\xi)\di\xi\\
&=\int_{\bT^d_h}
\e^{\I hk\cdot\xi}\sum_{j\in\bZ^d}\overline{\widehat{\psi}_0(h\xi+2\pi j)}\widehat{f}(\xi+\tfrac{2\pi}{h}j)\di\xi\\
&=(2\pi)^{d/2}(\sfF_h^{\ast}\Psi_h \widehat{f})(k), \quad k\in\bZ^d,
\end{align*}
where
\begin{equation*}
(\Psi_h \widehat{f})(\xi)=\sum_{j\in\bZ^d}\overline{\widehat{\psi}_0(h\xi+2\pi j)}\widehat{f}(\xi+\tfrac{2\pi}{h}j), \quad \xi\in \bR^d,
\end{equation*}
is a $(2\pi/h)\bZ^d$-periodic function. Thus for $g\in\widehat{\cH}$,
\begin{equation}\label{FKh}
\sfF_hK_h\cF^{\ast}g=(2\pi)^{d/2}\Psi_h g.
\end{equation}
Combining the results \eqref{FJh} and \eqref{FKh} we finally get
\begin{equation}\label{FJK}
(\cF J_h K_h \cF^{\ast}g)(\xi)=(2\pi)^{d}\widehat{\vp}_0(h\xi)
\sum_{j\in\bZ^d}\overline{\widehat{\psi}_0(h\xi+2\pi j)}g(\xi+\tfrac{2\pi}{h}j), \quad \xi\in\bR^d.
\end{equation}
This result is an extension of \cite[Lemma 2.1]{NT} to biorthogonal systems.

We introduce the following assumptions.
\begin{assumption}\label{assume-72}
Let $\widehat{\vp}_0,\widehat{\psi}_0\in \cH$ be essentially bounded and satisfy Assumption~{\rm\ref{assume-63}}. Assume further that there exists $c_0>0$ such that
\begin{equation}\label{supp-cond-phi}
\supp(\widehat{\vp}_0)\subseteq [-3\pi/2,3\pi/2]^d
\quad\text{and}\quad \abs{\widehat{\vp}_0(\xi)}\geq c_0,\quad
\xi\in[-\pi/2,\pi/2]^d,
\end{equation}
and
\begin{equation}\label{supp-cond-psi}
\supp(\widehat{\psi}_0)\subseteq [-3\pi/2,3\pi/2]^d
\quad\text{and}\quad \abs{\widehat{\psi}_0(\xi)}\geq c_0,\quad
\xi\in[-\pi/2,\pi/2]^d.
\end{equation}
\end{assumption}

It is not obvious that there exist $\vp_0\neq\psi_0$ such that Assumption~\ref{assume-72} holds. We construct a class of examples in  the next subsection.

\subsection{Examples of biorthogonal Riesz sequences} \label{sec:biorth_example}
For $j=1,2$ take $u_j\in C_0^{\infty}(\bR^d)$, such that $0\leq u_j(\xi)\leq1$,
\begin{equation*}
u_j(\xi)=1,\quad \xi\in[-\pi,\pi]^d,\quad
\supp u_j\subseteq [-3\pi/2,3\pi/2]^d.
\end{equation*}
Define   
\begin{equation*}
v(\xi)=\sum_{k\in\bZ^d}{u_1(\xi-2\pi k)}u_2(\xi-2\pi k), \quad \xi\in\bR^d.
\end{equation*}
Then $v(\xi-2\pi n)=v(\xi)$ for all $n\in\bZ^d$, and $v(\xi)\geq1$ for all $\xi\in\bR^d$. 
It follows that
\begin{equation*}
\sum_{k\in\bZ^d}\frac{{u_1(\xi-2\pi k)}u_2(\xi-2\pi k)}{v(\xi)}=1, \quad \xi\in\bR^d.
\end{equation*}

Define 
\begin{align}
\widehat{\vp}_0&
=(2\pi)^{-d/2}\frac{u_1}{v^{1/2}},\label{def-phi}\\
\widehat{\psi}_0&
=(2\pi)^{-d/2}\frac{u_2}{v^{1/2}}.\label{def-psi}
\end{align}
From the construction and Lemma~\ref{lemma71}, the  
sequences 
$\{\vp_{1,k}\}_{k\in\bZ^d}$ and $\{\psi_{1,k}\}_{k\in\bZ^d}$ obtained from \eqref{def-phi} and \eqref{def-psi} are biorthogonal.

\begin{remark} Variations in the construction in \eqref{def-phi} and \eqref{def-psi} are possible.
	\begin{enumerate}[(i)]
		\item Fix $\delta\in[0,1]$ and define
		\begin{equation*}
		\widehat{\vp}_{0}
		=(2\pi)^{-d\delta}\frac{u_1}{v^{\delta}},\quad
		\widehat{\psi}_{0}
		=(2\pi)^{-d(1-\delta)}\frac{u_2}{v^{1-\delta}}.
		\end{equation*}
		In particular, one can get either $\widehat{\vp}_{0}=u_1$ or $\widehat{\psi}_{0}=u_2$. 
		\item We can replace the condition $u_j\in C_0^{\infty}(\bR^d)$, $j=1,2$, by a finite regularity condition 
		$u_j\in C_0^{k}(\bR^d)$, $j=1,2$, $k\geq0$. Although, for the later Assumption~\ref{assume-beta} one may wish to ensure some regularity of $\widehat{\psi}_0$.
	\end{enumerate}
\end{remark}

The next step is to verify that both  sequences 
 are Riesz sequences.
We recall a definition, see~\cite[Definition~3.2.2]{OC}. 
\begin{definition}
A sequence $\{u_k\}_{k\in\bZ^d}$ in a Hilbert space $\cK$ is called a Bessel sequence, if there exists $C>0$ such that
\begin{equation*} 
\sum_{k\in\bZ^d}\abs{\ip{u_k}{u}}^2\leq C\norm{u}^2
\end{equation*}
for all $u\in\cK$.
\end{definition}

\begin{lemma}
The sequences $\{\vp_{1,k}\}_{k\in\bZ^d}$ and $\{\psi_{1,k}\}_{k\in\bZ^d}$, obtained from \eqref{def-phi} and \eqref{def-psi}, respectively, are both Bessel sequences.
\end{lemma}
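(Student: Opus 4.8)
The plan is to use the standard Fourier-side characterization of the Bessel property for systems of integer translates. I recall (see e.g.\ \cite[Theorem 3.2.3 or similar]{OC}) that for $\varphi\in\cH$, the system $\{\varphi(\cdot-k)\}_{k\in\bZ^d}$ is a Bessel sequence if and only if the $2\pi\bZ^d$-periodic function
\begin{equation*}
\Phi(\xi)=\sum_{k\in\bZ^d}\abs{\widehat{\varphi}(\xi-2\pi k)}^2
\end{equation*}
is essentially bounded, in which case $\esup_{\xi}\Phi(\xi)$ (up to the normalization constant $(2\pi)^d$ coming from our convention for $\cF$) serves as a Bessel bound. So it suffices to check that the functions $\widehat{\vp}_0$ and $\widehat{\psi}_0$ defined in \eqref{def-phi} and \eqref{def-psi} give rise to essentially bounded periodizations of $\abs{\widehat{\vp}_0}^2$ and $\abs{\widehat{\psi}_0}^2$.

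The key point is that $\widehat{\vp}_0$ and $\widehat{\psi}_0$ are \emph{bounded functions with compact support}. Indeed, $u_1,u_2\in C_0^\infty(\bR^d)$ with $0\le u_j\le 1$ and $\supp u_j\subseteq[-3\pi/2,3\pi/2]^d$, while $v\ge 1$ everywhere, so that $0\le \widehat{\vp}_0\le (2\pi)^{-d/2}$ and $0\le\widehat{\psi}_0\le (2\pi)^{-d/2}$, both supported in $[-3\pi/2,3\pi/2]^d$. Consequently, for each fixed $\xi$ only finitely many terms of $\sum_{k}\abs{\widehat{\vp}_0(\xi-2\pi k)}^2$ are nonzero — in fact, because the support has side length $3\pi<2\cdot 2\pi$, at most $2^d$ translates overlap any given point — and each such term is bounded by $(2\pi)^{-d}$. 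Hence $\sum_{k\in\bZ^d}\abs{\widehat{\vp}_0(\xi-2\pi k)}^2\le 2^d(2\pi)^{-d}$ for a.e.\ $\xi$, and similarly for $\widehat{\psi}_0$. This gives the required essential boundedness, so both systems are Bessel sequences with explicit Bessel bound $2^d$ (after accounting for the $(2\pi)^d$ factor in the unitary normalization of $\cF$).

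I do not expect any real obstacle here; the statement is essentially a bookkeeping exercise once the compact-support-and-boundedness observation is in place. The only mild care needed is to keep track of the normalization constant $(2\pi)^{-d/2}$ in $\cF$ versus the convention in the cited reference, so that the Bessel-bound formula is applied correctly; this is routine. One could alternatively avoid invoking the translate-characterization theorem entirely and prove the Bessel bound directly: for $u\in\cH$, write $\ip{\vp_{1,k}}{u}=\ip{\widehat{\vp}_{1,k}}{\widehat{u}}=\int_{\bR^d}\overline{\widehat{\vp}_0(\xi)}\,\e^{\I k\cdot\xi}\widehat{u}(\xi)\di\xi$, recognize this as the $k$-th Fourier coefficient of the $2\pi\bZ^d$-periodic function $\xi\mapsto\sum_{n}\overline{\widehat{\vp}_0(\xi-2\pi n)}\widehat{u}(\xi-2\pi n)$, apply Parseval over the torus, then Cauchy–Schwarz in the $n$-sum using the finite-overlap bound above; this yields the same constant. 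Either route closes the lemma.
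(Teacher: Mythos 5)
Your proposal is correct. The two ingredients you isolate — essential boundedness of $\widehat{\vp}_0,\widehat{\psi}_0$ (from $0\le u_j\le 1$ and $v\ge 1$) and the compact support in $[-3\pi/2,3\pi/2]^d$ with its finite-overlap consequence — are exactly what drive the paper's argument as well, but the routes differ in execution. You quote the standard Fourier-side characterization of the Bessel property for integer translates (boundedness of the periodization $\sum_k\abs{\widehat{\vp}_0(\xi-2\pi k)}^2$), which gives the clean explicit bound $2^d$ at the cost of importing a theorem from the shift-invariant-system literature and tracking its normalization convention. The paper instead stays self-contained and avoids periodization altogether: since $\supp\widehat{\vp}_0\subseteq[-3\pi/2,3\pi/2]^d\subset[-2\pi,2\pi]^d$, it writes $\ip{\vp_{1,k}}{u}=\int_{[-2\pi,2\pi]^d}\overline{\widehat{\vp}_0(\xi)}\widehat{u}(\xi)\e^{\I k\cdot\xi}\di\xi$ and observes that these are a \emph{subset} of the Fourier coefficients of $\overline{\widehat{\vp}_0}\widehat{u}$ on the doubled cube (the exponentials $\e^{\I k\cdot\xi}$ with $k\in\bZ^d$ sit inside the orthogonal system $\e^{\I m\cdot\xi/2}$, $m\in\bZ^d$), so Bessel's inequality plus $\norm{\widehat{\vp}_0}_\infty<\infty$ finishes immediately. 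Your sketched alternative at the end (periodize over $2\pi\bZ^d$, Parseval on the torus, Cauchy--Schwarz with the finite-overlap bound) is essentially a hands-on proof of the characterization you cite and lands on the same constant; either version closes the lemma.
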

\begin{proof}
It suffices to give the proof for $\{\vp_{1,k}\}_{k\in\bZ^d}$.
We have
\begin{align*}
\ip{\vp_{1,k}}{u}&=\int_{\bR^d}\  \overline{{\vp_0(x-k)}}u(x)\di x
=\int_{\bR^d}\overline{\widehat{\vp}_0(\xi)}
\widehat{u}(\xi)\e^{\I k\cdot\xi}\di\xi\\
&=\int_{[-2\pi,2\pi]^d}
\overline{\widehat{\vp}_0(\xi)}
\widehat{u}(\xi)\e^{\I k\cdot\xi}\di\xi.
\end{align*}
Thus $\ip{\vp_{1,k}}{u}$, $k\in\bZ^d$, are some of the Fourier coefficients of the function $\overline{\widehat{\vp}_0}\widehat{u}$
on $[-2\pi,2\pi]^d$. Hence
\begin{equation*}
\sum_{k\in\bZ^d}\abs{\ip{\vp_{1,k}}{u}}^2\leq
C \norm{\overline{\widehat{\vp}_0}\widehat{u}}^2_{L^2([-2\pi,2\pi]^d)}
\leq C\norm{u}^2_{L^2(\bR^d)}.
\end{equation*}
Here we have used boundedness of $\widehat{\vp}_0$ and the support property \eqref{supp-cond-phi}.
\end{proof}

The upper bound in Definition~\ref{def-62} is obtained in the following lemma. The proof is a variant of part of the proof of \cite[Theorem~3.2.3]{OC}.
\begin{lemma}\label{lemma-79}
The sequences $\{\vp_{1,k}\}_{k\in\bZ^d}$ and $\{\psi_{1,k}\}_{k\in\bZ^d}$, obtained from \eqref{def-phi} and \eqref{def-psi}, respectively, both satisfy the right hand inequality in \eqref{riesz}.
\end{lemma}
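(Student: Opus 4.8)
The plan is to establish the upper Riesz bound by passing to the Fourier side and recognizing the relevant quantity as an $L^2$-norm of a periodic function built from Fourier coefficients. By scaling it suffices to treat $\{\vp_{1,k}\}_{k\in\bZ^d}$ with $h=1$, and by symmetry the same argument applies verbatim to $\{\psi_{1,k}\}_{k\in\bZ^d}$. First I would write, for a finitely supported sequence $\{c_k\}$,
\begin{equation*}
\norm{\sum_{k\in\bZ^d}c_k\vp_{1,k}}^2
=\int_{\bR^d}\Bigl\lvert\widehat{\vp}_0(\xi)\sum_{k\in\bZ^d}c_k\e^{-\I k\cdot\xi}\Bigr\rvert^2\di\xi,
\end{equation*}
using that $(\cF\vp_{1,k})(\xi)=\e^{-\I k\cdot\xi}\widehat{\vp}_0(\xi)$ (the $h=1$ case of the formula for $\cF\vp_{h,k}$ recorded before \eqref{FJh}). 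The trigonometric series $m(\xi)=\sum_k c_k\e^{-\I k\cdot\xi}$ is $2\pi\bZ^d$-periodic with $\norm{m}_{L^2([-\pi,\pi]^d)}^2=(2\pi)^d\sum_k\abs{c_k}^2$ by Parseval.

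Next I would exploit the support condition \eqref{supp-cond-phi}: since $\supp\widehat{\vp}_0\subseteq[-3\pi/2,3\pi/2]^d$, the integral over $\bR^d$ can be folded into a bounded number of translated copies of the fundamental cell. Concretely, split $\bR^d$ into cubes $[-\pi,\pi]^d+2\pi n$; only those $n$ with $\{-1,0,1\}^d$ components contribute, and on each such cube $m(\xi-2\pi n)=m(\xi)$ by periodicity. Hence
\begin{equation*}
\norm{\sum_{k}c_k\vp_{1,k}}^2
=\int_{[-\pi,\pi]^d}\abs{m(\xi)}^2\Bigl(\sum_{n\in\{-1,0,1\}^d}\abs{\widehat{\vp}_0(\xi+2\pi n)}^2\Bigr)\di\xi
\leq \esup_{\xi}\Bigl(\sum_{n\in\bZ^d}\abs{\widehat{\vp}_0(\xi+2\pi n)}^2\Bigr)\cdot\norm{m}_{L^2([-\pi,\pi]^d)}^2,
\end{equation*}
and the periodized sum $\Phi(\xi)=\sum_{n\in\bZ^d}\abs{\widehat{\vp}_0(\xi+2\pi n)}^2$ is essentially bounded because $\widehat{\vp}_0$ is essentially bounded with compact support (again a consequence of Assumption~\ref{assume-72}), so at most $2^d$ terms are nonzero for any given $\xi$. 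This yields the right-hand inequality in \eqref{riesz} with $B_{\vp}=(2\pi)^{-d}\norm{\Phi}_{\infty}$ — wait, tracking the $(2\pi)^d$ from Parseval, one gets $B_{\vp}=\esup_\xi\Phi(\xi)$ after dividing through, finishing the proof for a dense set of sequences and hence, by the Bessel property already established (or by a routine density argument), for all $\ell^2$ sequences.

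The only mildly delicate point — and the one I would write out with care — is the interchange of summation and integration and the folding step, i.e.\ justifying that $\int_{\bR^d}\abs{\widehat{\vp}_0(\xi)m(\xi)}^2\di\xi=\int_{[-\pi,\pi]^d}\abs{m(\xi)}^2\Phi(\xi)\di\xi$; this is where the essential boundedness of $\widehat{\vp}_0$ and its compact support do the work, ensuring $\Phi\in L^\infty$ and that everything is a finite sum pointwise. Everything else is bookkeeping with normalization constants.
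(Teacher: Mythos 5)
Your proof is correct, but it takes a genuinely different route from the paper's. The paper deduces the upper Riesz bound from the Bessel property established in the preceding lemma: it applies a duality/Cauchy--Schwarz argument to the partial sums $\sum_{m<\abs{k}\leq n}c_k\vp_{1,k}$, which simultaneously proves convergence of the series and yields the bound with the Bessel constant $B_\vp$ (a variant of the proof of Theorem~3.2.3 in Christensen's book). You instead compute $\norm{\sum_k c_k\vp_{1,k}}^2$ directly on the Fourier side and periodize, obtaining $\int_{[-\pi,\pi]^d}\abs{m(\xi)}^2\Phi(\xi)\di\xi$ with $\Phi(\xi)=\sum_n\abs{\widehat{\vp}_0(\xi+2\pi n)}^2$; the essential boundedness and compact support in Assumption~\ref{assume-72} make $\Phi$ essentially bounded (a finite sum pointwise), and Parseval finishes. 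Your route is self-contained (it does not need the Bessel lemma as input, and indeed re-derives it implicitly) and produces an explicit, in fact sharp, constant $\esup_\xi\Phi(\xi)$ times the Parseval factor; the paper's route is shorter given that the Bessel property has already been proved and handles the convergence of the series for general $\ell^2$ coefficients in the same breath. Two small bookkeeping points: with the paper's normalization of $\cF$, Parseval on the torus gives $\norm{m}^2_{L^2([-\pi,\pi]^d)}=(2\pi)^d\sum_k\abs{c_k}^2$, so the constant you end up with is $B_\vp=(2\pi)^d\esup_\xi\Phi(\xi)$, not $\esup_\xi\Phi(\xi)$ — harmless for the lemma, but worth stating cleanly rather than hedging; and the passage from finitely supported sequences to all of $\ell^2(\bZ^d)$ should be done by applying your bound to differences of partial sums (showing the series is Cauchy), exactly as the paper does, rather than by invoking density loosely.
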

\begin{proof}
It suffices to consider $\{\vp_{1,k}\}_{k\in\bZ^d}$. Let $T\colon\ell^2(\bZ^d)\to\cH$ be given by
\begin{equation*}
T(\{c_k\}_{k\in\bZ^d})=\sum_{k\in\bZ^d}c_k\vp_{1,k}.
\end{equation*}
As a first step we show that the series is convergent. For $k\in\bZ^d$ we use the standard notation $\abs{k}=\sum_{i=1}^d\abs{k_i}$. Let $m,n\in\bZ$ and assume $-1\leq m<n$.
\begin{align*}
\norm{\sum_{\abs{k}\leq m}c_k\vp_{1,k}-
\sum_{\abs{k}\leq n}c_k\vp_{1,k}}&=
\norm{\sum_{m<\abs{k}\leq n}c_k\vp_{1,k}}\\
&=\sup_{\norm{u}=1}
\abs{\ip{u}{\sum_{m<\abs{k}\leq n}c_k\vp_{1,k}}}\\
&\leq \sup_{\norm{u}=1}\sum_{m<\abs{k}\leq n}\abs{\ip{u}{c_k\vp_{1,k}}}\\
&\leq \Bigl(\sum_{m<\abs{k}\leq n}
\abs{c_k}^2
\Bigr)^{1/2}
\Bigl(\sup_{\norm{u}=1}\sum_{m<\abs{k}\leq n}
\abs{\ip{u}{\vp_{1,k}}}^2
\Bigr)^{1/2}\\
&\leq \sqrt{B_{\vp}} \Bigl(\sum_{m<\abs{k}\leq n}
\abs{c_k}^2
\Bigr)^{1/2},
\end{align*}
since $\{\vp_{1,k}\}_{k\in\bZ^d}$ is a Bessel sequence, with a constant denoted by $B_{\vp}$. Thus the sequence $\{\sum_{\abs{k}\leq n}c_k\vp_{1,k}\}_{n\in\bN}$ is convergent in $\cH$. The upper Riesz bound follows from the computation above, if we take $m=-1$ and let $n\to\infty$.
\end{proof}

A sequence $\{c_k\}_{k\in\bZ^d}$ is said to be finite if at most finitely many $c_k$ are non-zero. 
\begin{lemma}\label{lemma-710}
For the sequences $\{\vp_{1,k}\}_{k\in\bZ^d}$ and $\{\psi_{1,k}\}_{k\in\bZ^d}$, obtained from \eqref{def-phi} and \eqref{def-psi}, respectively, there exist $A_{\vp}>0$ and $A_{\psi}>0$ such that
\begin{align}
\norm{\sum_{k\in\bZ^d} c_k\vp_{1,k}}^2&\geq A_{\vp}\sum_{k\in\bZ^d}\abs{c_k}^2,
\label{lower-phi}\\
\intertext{and}
\norm{\sum_{k\in\bZ^d} c_k\psi_{1,k}}^2&\geq A_{\psi}\sum_{k\in\bZ^d}\abs{c_k}^2,
\label{lower-psi}
\end{align}
for all finite sequences $\{c_k\}_{k\in\bZ^d}$.
\end{lemma}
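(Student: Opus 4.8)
The plan is to prove the lower Riesz bounds \eqref{lower-phi} and \eqref{lower-psi} by passing to the Fourier side, exactly as in the classical characterization of Riesz sequences generated by integer translates (cf.\ the argument in \cite[Section 3.6]{OC}). It suffices to treat $\{\vp_{1,k}\}_{k\in\bZ^d}$. For a finite sequence $\{c_k\}_{k\in\bZ^d}$, set $m(\xi)=\sum_{k\in\bZ^d}c_k\e^{-\I k\cdot\xi}$, a trigonometric polynomial on the torus. Using $(\cF\vp_{1,k})(\xi)=\e^{-\I k\cdot\xi}\widehat{\vp}_0(\xi)$ and Parseval, I would write
\begin{equation*}
\norm{\sum_{k\in\bZ^d}c_k\vp_{1,k}}^2
=\int_{\bR^d}\abs{m(\xi)}^2\abs{\widehat{\vp}_0(\xi)}^2\di\xi
=\int_{[-\pi,\pi]^d}\abs{m(\xi)}^2\Bigl(\sum_{k\in\bZ^d}\abs{\widehat{\vp}_0(\xi-2\pi k)}^2\Bigr)\di\xi,
\end{equation*}
where the last step uses $2\pi\bZ^d$-periodicity of $\abs{m}^2$ and unfolds the sum (the support condition \eqref{supp-cond-phi} makes the inner sum a finite sum, hence no convergence issue). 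The goal is thus to bound the periodized symbol $\Phi(\xi):=\sum_{k\in\bZ^d}\abs{\widehat{\vp}_0(\xi-2\pi k)}^2$ from below by a positive constant a.e.\ on $[-\pi,\pi]^d$; combined with $\int_{[-\pi,\pi]^d}\abs{m(\xi)}^2\di\xi=(2\pi)^d\sum_k\abs{c_k}^2$ (up to normalization), this yields \eqref{lower-phi} with $A_{\vp}$ proportional to $\essinf\Phi$.

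The key step is therefore the a.e.\ lower bound $\Phi(\xi)\geq c_0^2>0$. This is immediate from Assumption~\ref{assume-72}: for any $\xi\in[-\pi,\pi]^d$ one of the translates $\xi-2\pi k$ lies in $[-\pi/2,\pi/2]^d$ — more carefully, one should check that $[-\pi,\pi]^d$ is covered by the translates of $[-\pi/2,\pi/2]^d$ under $2\pi\bZ^d$, which is false as stated, so instead I would argue pointwise: for a.e.\ $\xi\in[-\pi,\pi]^d$, either $\xi\in[-\pi/2,\pi/2]^d$ (in which case $\abs{\widehat{\vp}_0(\xi)}\geq c_0$ by \eqref{supp-cond-phi}) or one must use a different fixed fundamental-domain argument. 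The clean fix is to work on the torus directly: since $\{\vp_{1,k}\}$ is biorthogonal to $\{\psi_{1,k}\}$, Lemma~\ref{lemma71} gives $\sum_{k}\overline{\widehat{\vp}_0(\xi-2\pi k)}\widehat{\psi}_0(\xi-2\pi k)=(2\pi)^{-d}$ a.e., and by Cauchy--Schwarz $(2\pi)^{-2d}\leq\Phi(\xi)\cdot\Psi(\xi)$ where $\Psi(\xi)=\sum_k\abs{\widehat{\psi}_0(\xi-2\pi k)}^2$. Since $\widehat{\vp}_0,\widehat{\psi}_0$ are essentially bounded with support in $[-3\pi/2,3\pi/2]^d$, each translate sum has at most $2^d$ nonzero terms, so $\Psi(\xi)\leq 2^d\norm{\widehat{\psi}_0}_\infty^2=:M<\infty$; hence $\Phi(\xi)\geq(2\pi)^{-2d}/M>0$ a.e. This gives \eqref{lower-phi} with $A_{\vp}=(2\pi)^{-d}/M$, and the same argument (swapping the roles of $\vp_0$ and $\psi_0$) gives \eqref{lower-psi}.

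The main obstacle is purely bookkeeping: making sure the unfolding of the integral is rigorous (finitely many nonzero terms thanks to compact support, so Tonelli applies trivially) and that the constant is uniform, i.e.\ independent of the finite sequence $\{c_k\}$ — which it is, since it only involves $\essinf\Phi$. No genuine analytic difficulty arises; the only subtlety is choosing to route the lower bound through the biorthogonality identity (Lemma~\ref{lemma71}) plus Cauchy--Schwarz rather than through a naive covering argument with $[-\pi/2,\pi/2]^d$, which would not by itself cover $[-\pi,\pi]^d$. Finally, combining this lemma with Lemma~\ref{lemma-79} (the upper bounds) and density of finite sequences shows that \eqref{riesz} holds, completing the verification that $\{\vp_{1,k}\}$ and $\{\psi_{1,k}\}$ are Riesz sequences, as claimed.
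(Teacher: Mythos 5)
Your proof is correct, but it takes a genuinely different and more computational route than the paper's. The paper's argument is a two-line abstract duality step: for $u=\sum_j c_j\vp_{1,j}$, biorthogonality gives $c_k=\ip{\psi_{1,k}}{u}$, and the Bessel property of $\{\psi_{1,k}\}_{k\in\bZ^d}$ (established in the preceding lemma) immediately yields $\sum_k\abs{c_k}^2\leq B_{\psi}\norm{u}^2$, i.e.\ $A_{\vp}=B_{\psi}^{-1}$; swapping roles gives $A_{\psi}=B_{\vp}^{-1}$. You instead pass to the Fourier side, periodize, and bound the symbol $\Phi(\xi)=\sum_k\abs{\widehat{\vp}_0(\xi-2\pi k)}^2$ from below via the biorthogonality identity \eqref{BIO-cond} of Lemma~\ref{lemma71} together with Cauchy--Schwarz against $\Psi$. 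Both arguments rest on the same two inputs --- biorthogonality plus essential boundedness and compact support of $\widehat{\psi}_0$ (which is precisely what makes $\{\psi_{1,k}\}_{k\in\bZ^d}$ Bessel, equivalently $\Psi\in L^{\infty}$) --- and your Cauchy--Schwarz step is essentially the Fourier-side incarnation of the paper's duality step. What your version buys is an explicit constant, $A_{\vp}=(2\pi)^{-d}/(2^d\norm{\widehat{\psi}_0}_{\infty}^2)$; what the paper's buys is brevity and generality, since it works for any biorthogonal pair whose dual system is Bessel, with no reference to the translate structure. You were also right to reject the naive covering argument: the $2\pi\bZ^d$-translates of $[-\pi/2,\pi/2]^d$ do not cover $[-\pi,\pi]^d$, so the pointwise lower bound on $\widehat{\vp}_0$ alone cannot give the lower bound on $\Phi$, and routing through biorthogonality is the correct fix.
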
 
\begin{proof}
Let $u=\sum_{j\in\bZ^d}c_j\vp_{1,j}$ for a finite sequence $\{c_j\}_{j\in\bZ^d}$. Biorthogonality of $\{\vp_{1,k}\}_{k\in\bZ^d}$ and $\{\psi_{1,k}\}_{k\in\bZ^d}$ implies that $c_k=\ip{\psi_{1,k}}{u}$. Since $\{\psi_{1,k}\}_{k\in\bZ^d}$ is a Bessel sequence (with bound denoted by $B_{\psi}$), we have
\begin{equation*}
\sum_{k\in\bZ^d}\abs{c_k}^2=\sum_{k\in\bZ^d}\abs{\ip{\psi_{1,k}}{u}}^2
\leq B_{\psi}\norm{u}^2=B_{\psi}\norm{\sum_{k\in\bZ^d}c_k\vp_{1,k}}^2,
\end{equation*}
which yields \eqref{lower-phi} with $A_{\vp}=B_{\psi}^{-1}$. Exchanging the roles of $\vp_{1,k}$ and $\psi_{1,k}$ we get \eqref{lower-psi} with $A_{\psi}=B_{\vp}^{-1}$.
\end{proof}
 
Combining the results in Lemmas~\ref{lemma-79} and~\ref{lemma-710} we get:
\begin{proposition}
The sequences $\{\vp_{1,k}\}_{k\in\bZ^d}$ and $\{\psi_{1,k}\}_{k\in\bZ^d}$, obtained from \eqref{def-phi} and \eqref{def-psi}, respectively, are biorthogonal Riesz sequences.
\end{proposition}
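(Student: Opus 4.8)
The plan is to assemble the statement from the three ingredients already prepared above. Biorthogonality of $\{\vp_{1,k}\}_{k\in\bZ^d}$ and $\{\psi_{1,k}\}_{k\in\bZ^d}$ has been recorded right after \eqref{def-phi}--\eqref{def-psi} via Lemma~\ref{lemma71} and the construction of $v$. Lemma~\ref{lemma-79} supplies the upper inequality in \eqref{riesz} for both systems, with constants $B_{\vp}$ and $B_{\psi}$ independent of the coefficient sequence, and also shows that the associated synthesis maps are well defined on all of $\ell^2(\bZ^d)$. Lemma~\ref{lemma-710} supplies the lower inequality, but a priori only for finite sequences. Hence the single remaining task is to extend the lower bound from finite sequences to arbitrary $\{c_k\}_{k\in\bZ^d}\in\ell^2(\bZ^d)$.

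To carry this out, consider the synthesis operator $T\colon\ell^2(\bZ^d)\to\cH$ given by $T(\{c_k\}_{k\in\bZ^d})=\sum_{k\in\bZ^d}c_k\vp_{1,k}$; by Lemma~\ref{lemma-79} the series converges in $\cH$ and $\norm{T}\leq\sqrt{B_{\vp}}$, so $T$ is bounded. Given $\{c_k\}_{k\in\bZ^d}\in\ell^2(\bZ^d)$, let $\{c_k^{(n)}\}_{k\in\bZ^d}$ denote its truncation to $\abs{k}\leq n$, which is a finite sequence with $\{c_k^{(n)}\}_{k\in\bZ^d}\to\{c_k\}_{k\in\bZ^d}$ in $\ell^2(\bZ^d)$. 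Continuity of $T$ then gives $T(\{c_k^{(n)}\}_{k\in\bZ^d})\to T(\{c_k\}_{k\in\bZ^d})$ in $\cH$, while also $\sum_{k\in\bZ^d}\abs{c_k^{(n)}}^2\to\sum_{k\in\bZ^d}\abs{c_k}^2$. Applying \eqref{lower-phi} to the finite sequence $\{c_k^{(n)}\}_{k\in\bZ^d}$ and letting $n\to\infty$ yields $\norm{\sum_{k\in\bZ^d}c_k\vp_{1,k}}^2\geq A_{\vp}\sum_{k\in\bZ^d}\abs{c_k}^2$ for all $\{c_k\}_{k\in\bZ^d}\in\ell^2(\bZ^d)$. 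The identical argument with $\vp$ replaced by $\psi$, using the boundedness of the corresponding synthesis operator and \eqref{lower-psi} in place of \eqref{lower-phi}, gives the lower bound for $\{\psi_{1,k}\}_{k\in\bZ^d}$.

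Combining these facts, each of $\{\vp_{1,k}\}_{k\in\bZ^d}$ and $\{\psi_{1,k}\}_{k\in\bZ^d}$ now satisfies both inequalities in \eqref{riesz} for all sequences in $\ell^2(\bZ^d)$, hence is a Riesz sequence in the sense of Definition~\ref{def-62}; together with the biorthogonality already established this is precisely the assertion. No genuine difficulty arises: the only point requiring a little care is the density/continuity passage in the preceding paragraph, together with checking that the Riesz constants are the fixed numbers $A_{\vp}=B_{\psi}^{-1}$ and $B_{\vp}$ (and symmetrically $A_{\psi}=B_{\vp}^{-1}$ and $B_{\psi}$) produced by Lemmas~\ref{lemma-79} and~\ref{lemma-710}, rather than sequence-dependent quantities.
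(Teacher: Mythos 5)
Your proposal is correct and follows essentially the same route as the paper, which simply combines Lemmas~\ref{lemma-79} and~\ref{lemma-710} with the biorthogonality already established via Lemma~\ref{lemma71}. The only addition is that you spell out the routine density/continuity argument extending the lower Riesz bound from finite sequences to all of $\ell^2(\bZ^d)$, a step the paper leaves implicit; that argument is carried out correctly.
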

Thus we have constructed a pair of sequences satisfying Assumption~\ref{assume-72}.

\section{Estimates for Fourier multipliers} \label{sec:fouriermult}

In this section we give norm resolvent estimates related to three different classes of Fourier multipliers. The results are collected in Propositions~\ref{prop-77}, \ref{prop126}, and \ref{prop410}.

We start by introducing the following assumption.
\begin{assumption}\label{def121}
Assume 
\begin{equation*} 
\alpha>\tfrac12,\quad \beta>-\tfrac12,\quad\text{and}\quad
\alpha\leq 1+\beta<2\alpha \leq 3+\beta.
\end{equation*}
Let $G_0\in C^1(\bR^d)$ be a real-valued and non-negative function. Assume 
\begin{enumerate}[\ \ (1)]
\item $G_0(0)=0$,
\item there exist $c>0$ and $c_0>0$ such that $G_0(\xi)\geq c\abs{\xi}^{\alpha}$, for all $\xi\in\bR^d$ with $\abs{\xi}\geq c_0$,
\item $\abs{\nabla G_0(\xi)}\leq c \abs{\xi}^{\beta}$, 
for all $\xi\in\bR^d$,
\item $G_0(\xi_1,\xi_2,\ldots,\xi_d) = G_0(\abs{\xi_1},\abs{\xi_2},\ldots,\abs{\xi_d})$, for all $\xi\in\bR^d$.
\end{enumerate}
\end{assumption}
\begin{remark} 
	The condition $\alpha\leq 1+\beta$ is necessary, since due to Assumption~\ref{def121}(1)
	\begin{equation*}
	G_0(\xi)=\int_0^1\xi\cdot\nabla G_0(t\xi)\di t.
	\end{equation*}
	The condition $1+\beta<2\alpha$ gives control over oscillations in $G_0$. Finally, the condition $2\alpha\leq 3+\beta$ is 
tied to the accuracy for the choice of discretization \eqref{def-G0h}. For details on this last part, see the proof of Lemma~\ref{lemma:diffresG} and Remark~\ref{remark:maxorder}.
\end{remark}

We define the discretized multiplier as
\begin{equation}\label{def-G0h}
G_{0,h}(\xi)=G_0(\tfrac{2}{h}\sin(\tfrac{h}{2}\xi_1),
\tfrac{2}{h}\sin(\tfrac{h}{2}\xi_2),\ldots,
\tfrac{2}{h}\sin(\tfrac{h}{2}\xi_d)), \quad h>0, \quad \xi\in\bR^d.
\end{equation}
Due to Assumption~\ref{def121}(4) the function $G_{0,h}(\xi)$ is $(2\pi/h)\bZ^d$-periodic. Using the Fourier transforms we then define
\begin{equation}\label{def-H0}
H_0=\cF^{\ast}G_0(\cdot)\cF\quad\text{on $\cH=L^2(\bR^d)$}
\end{equation}
and
\begin{equation}\label{def-H0h}
H_{0,h}=\sfF_h^{\ast}G_{0,h}(\cdot)\sfF_h
\quad\text{on $\cH_h=\ell^2(h\bZ^d)$}.
\end{equation}

Take $\vp_0,\psi_0\in \cH$ such that Assumption~\ref{assume-72} is satisfied. For $j\in\bZ^d$ the support conditions \eqref{supp-cond-phi} and \eqref{supp-cond-psi} imply
\begin{equation}\label{supp-intersect}
\supp(\widehat{\vp}_0)\cap
\supp(\widehat{\psi}_0(\cdot-2\pi j))=\emptyset,\quad
\text{if $\abs{j}>1$}.
\end{equation}
Here $\abs{j}=\sum_{i=1}^d\abs{j_i}$.

\begin{lemma}\label{lemma-74}
Let $\vp_0$ and $\psi_0$ satisfy Assumption~{\rm\ref{assume-72}}. 
Let $G_0$ satisfy Assumption~{\rm\ref{def121}} and let $H_0$ be defined by \eqref{def-H0}. For each compact set $K\subset\bC\setminus[0,\infty)$,
there exist $C>0$ and $h_0>0$ such that
\begin{equation}\label{eq710}
\norm{(J_hK_h-\ident)(H_0-z\ident)^{-1}}_{\cB(\cH)}\leq C h^{\alpha},
\end{equation}
for all $z\in K$ and all $0<h\leq h_0$.
\end{lemma}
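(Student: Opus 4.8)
The strategy is to move everything to the Fourier side, where $H_0$ becomes multiplication by $G_0$ and $J_hK_h$ becomes the explicit operator from \eqref{FJK}. Write $R_z = (H_0 - z\ident)^{-1}$, so that $\cF R_z\cF^\ast$ is multiplication by $(G_0(\xi)-z)^{-1}$. Conjugating by $\cF$, the operator to be estimated is $\cF(J_hK_h - \ident)R_z\cF^\ast$, whose action on $g\in\widehat\cH$ sends $g$ to
\begin{equation*}
(2\pi)^d\widehat\vp_0(h\xi)\sum_{j\in\bZ^d}\overline{\widehat\psi_0(h\xi+2\pi j)}\frac{g(\xi+\tfrac{2\pi}{h}j)}{G_0(\xi+\tfrac{2\pi}{h}j)-z} - \frac{g(\xi)}{G_0(\xi)-z}.
\end{equation*}
Using \eqref{BIO-cond}, the $j=0$ term of the sum combines with the subtracted term to give $\bigl((2\pi)^d\abs{\widehat\vp_0(h\xi)}^2 - 1\bigr)$ times nothing useful — instead one should use the biorthogonality identity $(2\pi)^d\widehat\vp_0(h\xi)\overline{\widehat\psi_0(h\xi)} = 1 - (2\pi)^d\sum_{j\neq 0}\widehat\vp_0(h\xi)\overline{\widehat\psi_0(h\xi+2\pi j)}$ to rewrite the difference as a sum purely over $j\in\bZ^d$ of ``off-diagonal'' terms:
\begin{equation*}
(2\pi)^d\widehat\vp_0(h\xi)\sum_{j\neq 0}\overline{\widehat\psi_0(h\xi+2\pi j)}\Bigl(\frac{g(\xi+\tfrac{2\pi}{h}j)}{G_0(\xi+\tfrac{2\pi}{h}j)-z} - \frac{g(\xi)}{G_0(\xi)-z}\Bigr),
\end{equation*}
and by the support condition \eqref{supp-intersect} only the finitely many $j$ with $\abs j\le 1$ contribute. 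So it suffices to bound, for each such fixed $j\neq 0$, the $\widehat\cH\to\widehat\cH$ norm of the map sending $g$ to $(2\pi)^d\widehat\vp_0(h\xi)\overline{\widehat\psi_0(h\xi+2\pi j)}$ times the bracketed difference of resolvent-weighted translates.

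**Key steps.** First, on $\supp\widehat\vp_0(h\cdot)\cap\supp\widehat\psi_0(h\cdot+2\pi j)$ we have $\abs\xi\lesssim 1/h$ and $\abs{\xi+\tfrac{2\pi}{h}j}\lesssim 1/h$, while these two points differ by $\tfrac{2\pi}{h}\abs j \gtrsim 1/h$; in particular one of the two points has modulus $\gtrsim 1/h$, so by Assumption~\ref{def121}(2) the corresponding value of $G_0$ is $\gtrsim h^{-\alpha}$. Since $z$ ranges over a compact set $K\subset\bC\setminus[0,\infty)$ and $G_0\ge 0$, we get $\dist(z,[0,\infty))\ge\delta_K>0$, hence $\abs{G_0(\eta)-z}\ge\delta_K$ always, and $\abs{G_0(\eta)-z}\gtrsim h^{-\alpha}$ at whichever of the two points is large. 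Therefore $\abs{G_0(\xi+\tfrac{2\pi}{h}j)-z}^{-1}\lesssim h^\alpha$ or $\abs{G_0(\xi)-z}^{-1}\lesssim h^\alpha$ on the relevant support. Splitting the bracket as a difference of two terms and estimating each separately (the "good" factor supplying the $h^\alpha$, the other resolvent factor being bounded by $\delta_K^{-1}$), one reduces to showing that each of
\begin{equation*}
g\mapsto \widehat\vp_0(h\xi)\overline{\widehat\psi_0(h\xi+2\pi j)}\,\frac{g(\xi)}{G_0(\xi)-z}, \qquad g\mapsto \widehat\vp_0(h\xi)\overline{\widehat\psi_0(h\xi+2\pi j)}\,\frac{g(\xi+\tfrac{2\pi}{h}j)}{G_0(\xi+\tfrac{2\pi}{h}j)-z}
\end{equation*}
is bounded on $L^2$ with norm $\lesssim h^\alpha$. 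For the first, the map $g\mapsto g(\xi)/(G_0(\xi)-z)$ on the support set has operator norm $\le$ the sup of $\abs{G_0-z}^{-1}$ there, which is $\lesssim h^\alpha$, and multiplication by the bounded functions $\widehat\vp_0(h\cdot),\widehat\psi_0(h\cdot+2\pi j)$ costs only $\norm{\widehat\vp_0}_\infty\norm{\widehat\psi_0}_\infty$; the translation $\xi\mapsto\xi+\tfrac{2\pi}{h}j$ is unitary on $L^2(\bR^d)$, so the second is handled the same way after a change of variables. Summing over the finitely many $j$ with $1\le\abs j\le 1$ and conjugating back by the unitary $\cF$ gives \eqref{eq710}, with $h_0$ chosen small enough that $c_0 h_0 \le$ the geometric threshold making the "large modulus" conclusion valid.

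**Main obstacle.** The delicate point is the bookkeeping in the first step: one must verify that after using \eqref{BIO-cond} the subtracted term $\ident\cdot R_z$ is exactly absorbed, leaving a clean sum over $j\neq 0$ with no leftover diagonal piece — and then check the geometric claim that on the intersection of the shifted supports at least one of $\xi$, $\xi+\tfrac{2\pi}{h}j$ has modulus bounded below by a constant times $1/h$ (this uses $\supp\widehat\vp_0,\supp\widehat\psi_0\subseteq[-3\pi/2,3\pi/2]^d$ together with $\abs j\ge 1$, so that the two points are forced $\gtrsim 1/h$ apart while confined near the origin at scale $1/h$, forcing the larger one to be $\gtrsim 1/h$). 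Everything else — boundedness of multiplication operators, unitarity of translations and of $\cF$, and the bound $\abs{G_0-z}^{-1}\lesssim h^\alpha$ via Assumption~\ref{def121}(2) — is routine. Note this argument uses only Assumption~\ref{def121}(1),(2),(4) and the support/boundedness of $\widehat\vp_0,\widehat\psi_0$; the gradient bound (3) and the inequalities relating $\alpha$ and $\beta$ are not needed for this particular lemma.
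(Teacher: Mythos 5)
Your overall architecture (pass to the Fourier side via \eqref{FJK}, isolate the finitely many $j$ with $\abs{j}\le 1$, and extract $h^{\alpha}$ from Assumption~\ref{def121}(2) on the shifted supports) matches the paper, and your treatment of the off-diagonal terms is sound. But the step you yourself flag as the delicate one fails: the identity
\begin{equation*}
(2\pi)^d\widehat\vp_0(h\xi)\overline{\widehat\psi_0(h\xi)}
=1-(2\pi)^d\sum_{j\neq 0}\widehat\vp_0(h\xi)\overline{\widehat\psi_0(h\xi+2\pi j)}
\end{equation*}
is \emph{not} the biorthogonality condition and is false. Condition \eqref{BIO-cond} reads $\sum_{j}\overline{\widehat\vp_0(\eta-2\pi j)}\widehat\psi_0(\eta-2\pi j)=(2\pi)^{-d}$, with \emph{both} factors shifted by the same lattice vector; your identity keeps $\widehat\vp_0$ fixed at $\eta=h\xi$ while shifting only $\widehat\psi_0$, i.e.\ it asserts $(2\pi)^d\widehat\vp_0(\eta)\sum_j\overline{\widehat\psi_0(\eta+2\pi j)}=1$. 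This already fails for every $\eta\notin\supp\widehat\vp_0$ (left side $0$), and it also fails on the annulus $[-3\pi/2,3\pi/2]^d\setminus[-\pi/2,\pi/2]^d$, where the true relation \eqref{BIO-cond} picks up contributions from nonzero shifts of $\widehat\vp_0$ as well. Consequently the subtracted term $(G_0(\xi)-z)^{-1}g(\xi)$ is \emph{not} absorbed into a clean sum over $j\neq0$; there is a genuine leftover diagonal piece.

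The correct handling — and the content your proposal is missing — is a splitting of $\xi$-space. For $h\xi\in[-\pi/2,\pi/2]^d$ the only surviving term of \eqref{BIO-cond} is $j=0$, so there $(2\pi)^d\widehat\vp_0(h\xi)\overline{\widehat\psi_0(h\xi)}=1$ and the $j=0$ term of the sum cancels the subtracted term exactly (the $j\neq0$ terms vanish there by the support condition). For $h\xi\notin[-\pi/2,\pi/2]^d$ one has $\abs{\xi}\ge \pi/(2h)$, so Assumption~\ref{def121}(2) gives $\abs{G_0(\xi)-z}^{-1}\le Ch^{\alpha}$ and the $j=0$ term and the subtracted term are each individually bounded by $Ch^{\alpha}\abs{g(\xi)}$. (Equivalently: the error you commit by invoking the false identity is exactly $\bigl(1-(2\pi)^d\sum_j\widehat\vp_0(h\xi)\overline{\widehat\psi_0(h\xi+2\pi j)}\bigr)(G_0(\xi)-z)^{-1}g(\xi)$, which happens to vanish on the inner cube and to be $O(h^{\alpha}\abs{g(\xi)})$ off it — but establishing that is precisely the argument you omitted.) With this repair, the remainder of your proof — the lower bound $\abs{h\xi+2\pi j}\ge c_1$ on the shifted supports, unitarity of the translation, boundedness of $\widehat\vp_0,\widehat\psi_0$, and working with general $z\in K$ via $\dist(z,[0,\infty))\ge\delta_K$ instead of reducing to $z=-1$ as the paper does — goes through, and your observation that only parts (2) and the support/boundedness hypotheses are needed here is correct.
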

\begin{proof}
It suffices to prove the estimate for $K=\{-1\}$, since
 $(H_0+\ident)(H_0-z\ident)^{-1}$ is uniformly bounded in norm for $z\in K$.
Let $g\in\cS(\bR^d)$, the Schwartz space.
By the computations leading to \eqref{FJK}, we have
\begin{align*}
\bigl(\cF&(J_hK_h-\ident)(H_0+\ident)^{-1}\cF^{\ast}g\bigr)(\xi)\\
&=
(2\pi)^d\widehat{\vp}_0(h\xi)
\sum_{j\in\bZ^d}\overline{\widehat{\psi}_0(h\xi+2\pi j)}(G_0(\xi+\tfrac{2\pi}{h}j)+1)^{-1}g(\xi+\tfrac{2\pi}{h}j)\\
&\quad - (G_0(\xi)+1)^{-1}g(\xi), \quad \xi\in\bR^d.
\end{align*}
For $h\xi\in[-\pi/2,\pi/2]^d$ the $j=0$ term is the only non-zero term in the sum. The result \eqref{BIO-cond} then implies that the $j=0$ term and the last term cancel for $h\xi\in[-\pi/2,\pi/2]^d$. For $h\xi\notin[-\pi/2,\pi/2]^d$ we have, using Assumption~\ref{def121}(2), that
$(G_0(\xi)+1)^{-1}\leq C h^{\alpha}$ if $0<h\leq h_0$ for $h_0$ sufficiently small.
From the boundedness of $\widehat{\vp}_0$ and $\widehat{\psi}_0$, the norms of the $j=0$ term and the last term are bounded by 
$C h^{\alpha}\norm{g}$.

 For the sum term, the observation \eqref{supp-intersect} implies that we need only consider terms with $\abs{j}=1$. Let $j\in\bZ^d$ with $\abs{j}=1$. Then for $h\xi\in \supp(\widehat{\vp}_0)\cap
 \supp(\widehat{\psi}_0(\cdot-2\pi j))$ we have for some $c_1>0$
\begin{equation*}
\abs{h\xi+2\pi j}\geq c_1,
\end{equation*}
such that
\begin{equation*}
\abs{\xi+\tfrac{2\pi}{h} j}\geq c_1 h^{-1}.
\end{equation*}
This implies for a sufficiently small $h_0$ that we have
\begin{equation}\label{G0-shift}
G_0(\xi+\tfrac{2\pi}{h} j)\geq c h^{-\alpha}
\end{equation}
for $0<h\leq h_0$, by Assumption~\ref{def121}(2).

This estimate together with the boundedness of $\widehat{\vp}_0$ and $\widehat{\psi}_0$ gives
\begin{equation*}
\bigl\vert(2\pi)^d\widehat{\vp}_0(h\xi)
\overline{\widehat{\psi}_0(h\xi+2\pi j)}g(\xi+\tfrac{2\pi}{h}j)(G_0(\xi+\tfrac{2\pi}{h}j)+1)^{-1}
\bigr\vert
\leq C h^{\alpha} \abs{g(\xi+\tfrac{2\pi}{h}j)},
\end{equation*}
for $0<h\leq h_0$ and $\xi\in\bR^d$.
Squaring and integrating over $\bR^d$ gives an estimate of the term for this $j$ with $\abs{j}=1$ of the form $C h^{\alpha}\norm{g}$. By density, after adding up the results for the finite  number of terms with $\abs{j}=1$, the estimate \eqref{eq710} follows.
\end{proof}

Before we can obtain norm resolvent estimates, we need the following result.

\begin{lemma} \label{lemma:diffresG}
	Let $G_0$ satisfy Assumption~{\rm\ref{def121}} and let $\gamma = \min\{2\alpha-1,2\alpha-\beta-1\}$. There exists $C>0$ such that
	\begin{equation}\label{diffG_first}
		\abs{(G_{0,h}(\xi)+1)^{-1}-(G_0(\xi)+1)^{-1}}\leq Ch^{\gamma}, 
	\end{equation}
	for $h>0$ and $h\xi\in[-3\pi/2,3\pi/2]^d$.
\end{lemma}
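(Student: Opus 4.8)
The plan is to reduce the resolvent difference to a pointwise estimate on $|G_{0,h}(\xi)-G_0(\xi)|$ that degrades where $|\xi|$ is large, and then balance the two regimes. Writing
\begin{equation*}
(G_{0,h}(\xi)+1)^{-1}-(G_0(\xi)+1)^{-1}
=\frac{G_0(\xi)-G_{0,h}(\xi)}{(G_{0,h}(\xi)+1)(G_0(\xi)+1)},
\end{equation*}
and using that both $G_0$ and $G_{0,h}$ are non-negative (so the denominator is $\geq 1$, and in fact $\geq\max\{1,G_0(\xi)\}$), the left side is bounded by $|G_0(\xi)-G_{0,h}(\xi)|$ and also by $|G_0(\xi)-G_{0,h}(\xi)|/G_0(\xi)$ where that is useful. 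So everything comes down to estimating the numerator.

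The core of the argument is a Taylor/mean-value estimate for the difference between $G_0(\xi)$ and $G_0$ evaluated at the ``symbol-corrected'' argument $\sigma_h(\xi)=(\tfrac{2}{h}\sin(\tfrac{h}{2}\xi_1),\ldots,\tfrac{2}{h}\sin(\tfrac{h}{2}\xi_d))$. Since $|\tfrac{2}{h}\sin(\tfrac{h}{2}t)-t|\leq C h^2|t|^3$ for $|ht|$ bounded (here $h\xi\in[-3\pi/2,3\pi/2]^d$ ensures $|h\xi_i|\le 3\pi/2$), we have $|\sigma_h(\xi)-\xi|\leq C h^2|\xi|^3$ componentwise, and also $|\sigma_h(\xi)|\leq C|\xi|$ and $|\sigma_h(\xi)|\geq c|\xi|$ on this range (the sine is comparable to its argument). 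Then by the fundamental theorem of calculus along the segment from $\sigma_h(\xi)$ to $\xi$, together with Assumption~\ref{def121}(3),
\begin{equation*}
|G_0(\xi)-G_{0,h}(\xi)|=\Bigl|\int_0^1 (\xi-\sigma_h(\xi))\cdot\nabla G_0\bigl(\sigma_h(\xi)+t(\xi-\sigma_h(\xi))\bigr)\di t\Bigr|
\leq C h^2|\xi|^3\cdot|\xi|^\beta=C h^2|\xi|^{3+\beta},
\end{equation*}
using that the interpolating point has modulus comparable to $|\xi|$ on the relevant range. This is where the hypothesis $2\alpha\leq 3+\beta$ enters, and it is the step I expect to require the most care: one must check the comparability $|\sigma_h(\xi)+t(\xi-\sigma_h(\xi))|\asymp|\xi|$ uniformly in $t\in[0,1]$ and $h\xi\in[-3\pi/2,3\pi/2]^d$, and handle small $|\xi|$ separately (where $|\xi|^\beta$ could blow up if $\beta<0$) by instead using $|G_0(\xi)-G_{0,h}(\xi)|\leq C h^2|\xi|^2$ via a direct Lipschitz bound near the origin combined with the cancellation $\sigma_h(0)=0$; in fact on any fixed ball $|\xi|\le R$ one gets $|G_0(\xi)-G_{0,h}(\xi)|\le C h^2$ outright.

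With the numerator bound in hand, I split into two regions using a threshold $|\xi|\sim h^{-1}$ (equivalently $h|\xi|\sim 1$), staying inside $h\xi\in[-3\pi/2,3\pi/2]^d$. For $|\xi|\leq c_0$ (a fixed ball) the bound is simply $C h^2\le Ch^\gamma$ since $\gamma\le 2$. For $c_0\le|\xi|$, use the denominator: $(G_{0,h}(\xi)+1)(G_0(\xi)+1)\geq (1+c|\xi|^\alpha)(1+c|\sigma_h(\xi)|^\alpha)\ge c|\xi|^{2\alpha}$ by Assumption~\ref{def121}(2) (applied to $G_0$ and, after the comparability $|\sigma_h(\xi)|\ge c|\xi|$, to $G_{0,h}$), so the left side of \eqref{diffG_first} is at most
\begin{equation*}
\frac{C h^2|\xi|^{3+\beta}}{c|\xi|^{2\alpha}}=C h^2|\xi|^{3+\beta-2\alpha}.
\end{equation*}
If $3+\beta-2\alpha\geq 0$, then on $|\xi|\leq C h^{-1}$ this is $\le C h^2 h^{-(3+\beta-2\alpha)}=C h^{2\alpha-\beta-1}$; one checks $2\alpha-\beta-1\le 2\alpha-1$ here since $\beta\ge 0$ is not assumed, so more carefully $\gamma=\min\{2\alpha-1,2\alpha-\beta-1\}$ absorbs both, and for $\beta<0$ the exponent $3+\beta-2\alpha$ may be negative in which case one uses the competing bound $|G_0-G_{0,h}|\le C h^2|\xi|^2$ and $|\xi|^{2-2\alpha}\le C$ (as $\alpha>\tfrac12$, $2-2\alpha<1$) on the tail, yielding $h^2|\xi|^{2-2\alpha}\le C h^{2}h^{-(2-2\alpha)^+}$, again dominated by $h^{2\alpha-1}$. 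Collecting the worst of these exponents over the two regimes gives exactly $h^{\min\{2\alpha-1,2\alpha-\beta-1\}}=h^\gamma$, completing the proof. The only genuinely delicate points are the uniform comparability of the interpolated argument with $|\xi|$ and the bookkeeping that the region $h|\xi|\gtrsim 1$ is empty once one restricts to $h\xi\in[-3\pi/2,3\pi/2]^d$ — so in fact the ``large $|\xi|$'' analysis is really the analysis on the annulus $c_0\le|\xi|\le 3\pi\sqrt{d}/(2h)$, which is where the powers of $h$ are extracted.
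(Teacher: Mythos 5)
Your proposal is essentially the paper's proof: the resolvent identity, the Taylor expansion of $\sin$ giving $\abs{\sigma_h(\xi)-\xi}\leq Ch^2\abs{\xi}^3$, the fundamental theorem of calculus along the segment combined with Assumption~\ref{def121}(3), the lower bounds $G_0(\xi)+1,\,G_{0,h}(\xi)+1\geq c\abs{\xi}^{\alpha}$ on the relevant range, and the fact that $h\abs{\xi}$ is bounded together with $3+\beta-2\alpha\geq0$ to extract $h^{2\alpha-\beta-1}$ (resp.\ $h^{2\alpha-1}$ via the bounded-gradient route when $\beta<0$). Your comparability claim $\abs{\sigma_h(\xi)+t(\xi-\sigma_h(\xi))}\asymp\abs{\xi}$ is correct (each component of the interpolant has the same sign as $\xi_i$ with modulus between $c\abs{\xi_i}$ and $\abs{\xi_i}$) and slightly streamlines the paper's $\beta\geq0$ case, which instead uses $\abs{\xi+t\zeta_h}^{\beta}\leq C(\abs{\xi}^{\beta}+\abs{\zeta_h}^{\beta})$ at the cost of an extra term $h^{2+2\beta}\abs{\xi}^{3+3\beta}$; the only blemishes are harmless bookkeeping slips (the Lipschitz bound yields $Ch^2\abs{\xi}^3$, not $Ch^2\abs{\xi}^2$, and $3+\beta-2\alpha$ is never negative since $2\alpha\leq3+\beta$ is assumed), neither of which affects the final exponent $\gamma$.
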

\begin{proof}
	Let $\eta\in\bR$.
	Taylor's formula with remainder leads to
	\begin{equation*}
		\sin(\eta)=\eta-\frac12\eta^3\int^1_0\sin(t\eta)(1-t)^2\di t.
	\end{equation*}
	Writing $\xi=(\xi_1,\xi_2,\ldots,\xi_d)$ we thus have
	for $i=1,2,\ldots,d$
	\begin{equation}\label{expand}
		\tfrac{2}{h}\sin(\tfrac{h}{2}\xi_i)=
		\xi_i-\frac{h^2}{8}\xi_i^3\int_0^1\sin(\tfrac{h}{2}t\xi_i)
		(1-t)^2\di t.
	\end{equation}
	We also have
	\begin{equation*}
		G_0(\xi+\zeta)=G_0(\xi)+\int_0^1\zeta\cdot\nabla G_0(\xi+t\zeta)dt, \quad \xi,\zeta\in\bR^d.
	\end{equation*}
	Take $\zeta=\zeta_h$, such that $\zeta_{h,i}$ equals the second term on the right hand side of \eqref{expand} for $i=1,2,\ldots,d$, i.e. 
	\begin{equation*} 
		\zeta_{h,i}=-\frac{h^2}{8}\xi_i^3\int_0^1\sin(\tfrac{h}{2}t\xi_i)
		(1-t)^2\di t.
	\end{equation*}
	Thereby
	\begin{equation*}
		G_{0,h}(\xi)-G_0(\xi)=\int_0^1\zeta_h\cdot\nabla G_0(\xi+t\zeta_h)\di t.
	\end{equation*}
	We split the analysis into the cases $\beta\geq0$ and $\beta<0$ in Assumption~\ref{def121}(3).
	\paragraph{The case $\boldsymbol{\beta\geq0}$.}
	Using Assumption~\ref{def121}(3) we get
	\begin{align}
		\abs{G_{0,h}(\xi)-G_0(\xi)}&\leq\abs{\zeta_h}\int_0^1
		\abs{\nabla G_0(\xi+t\zeta_h)} \di t\notag\\
		&\leq C \abs{\zeta_h}\int_0^1
		\abs{\xi+t\zeta_h}^{\beta} \di t\notag\\
		&\leq C \abs{\zeta_h}(\abs{\xi}^{\beta}+\abs{\zeta_h}^{\beta})
		\notag\\
		&\leq C\bigl(
		h^2\abs{\xi}^{3+\beta}+h^{2+2\beta}\abs{\xi}^{3+3\beta}
		\bigr),\label{diff-est}
	\end{align}
	where we used $\abs{\zeta_h}\leq Ch^2\abs{\xi}^3$.

	For $h\xi\in[-3\pi/2,3\pi/2]^d$ we need an estimate independent of $\xi$. It suffices to consider $\abs{\xi}$ sufficiently large. We write
	\begin{equation}\label{diff-expr}
		(G_{0,h}(\xi)+1)^{-1}-(G_0(\xi)+1)^{-1}=
		(G_{0,h}(\xi)+1)^{-1}\bigl(
		G_0(\xi)-G_{0,h}(\xi)
		\bigr)(G_0(\xi)+1)^{-1}.
	\end{equation}
	The middle term on the right hand side is estimated by \eqref{diff-est}. From Assumption~\ref{def121}(2) and for $\abs{\xi}\geq c_0$ we have
	\begin{equation*} 
		(G_0(\xi)+1)^{-1}\leq C \abs{\xi}^{-\alpha}.
	\end{equation*}
For $\abs{\xi}\leq c_0$ we have the estimate $(G_0(\xi)+1)^{-1}\leq1$, since $G_0(\xi)\geq0$. The above estimate then holds also for $\abs{\xi}\leq c_0$ if we take $C$ large enough.
	We note that there exists $c>0$ such that 
	$\abs{\sin(\eta)}\geq c\abs{\eta}$ for $\eta\in[-3\pi/4,3\pi/4]$.
	Thus for
	$h\xi\in[-3\pi/2,3\pi/2]^d$ we have
	\begin{equation*}
		G_{0,h}(\xi)\geq C\Bigl( \sum_{i=1}^d \abs{\tfrac{2}{h}\sin(\tfrac{h}{2}\xi_i)}^2\Bigr)^{\alpha/2}
		\geq C \abs{\xi}^{\alpha},
	\end{equation*}
	such that 
	\begin{equation}\label{res-G0h}
		(G_{0,h}(\xi)+1)^{-1}\leq C \abs{\xi}^{-\alpha}.
	\end{equation}
	Combining \eqref{diff-expr} with these estimates we get 
	\begin{align}\label{est1212}
		\abs{(G_{0,h}(\xi)+1)^{-1}-(G_0(\xi)+1)^{-1}} &\leq C
		\bigl(
		h^2\abs{\xi}^{3+\beta-2\alpha}+h^{2+2\beta}
		\abs{\xi}^{3+3\beta-2\alpha}
		\bigr) \notag\\
		&= C(h\abs{\xi})^{3+\beta-2\alpha}\bigl(1 + (h\abs{\xi})^{2\beta}\bigr)h^{2\alpha-\beta-1}
	\end{align}
	for $h\xi\in[-3\pi/2,3\pi/2]^d$. 
Assumption~\ref{def121} implies $3+\beta-2\alpha \geq 0$, such that the terms in \eqref{est1212} involving $h\abs{\xi}$ are uniformly bounded for $h\xi\in[-3\pi/2,3\pi/2]^d$. 

Hence we get an estimate of the type $Ch^{2\alpha-\beta-1}$ where we also note from Assumption~\ref{def121} that $2\alpha-\beta-1>0$.
	
	\paragraph{The case $\boldsymbol{\beta<0}$.}  Going back to the estimates leading to \eqref{diff-est} we now get
	\begin{equation}
		\abs{G_{0,h}(\xi)-G_0(\xi)}\leq\abs{\zeta_h}\int_0^1
		\abs{\nabla G_0(\xi+t\zeta_h)} \di t
		\leq C \abs{\zeta_h}\leq C h^2\abs{\xi}^3.
		\label{diff-est2}
	\end{equation}
	Repeating the computations leading to \eqref{est1212} yields
	\begin{equation*} 
		\abs{(G_{0,h}(\xi)+1)^{-1}-(G_0(\xi)+1)^{-1}}\leq C
		h^2\abs{\xi}^{3-2\alpha} = C(h\abs{\xi})^{3-2\alpha}h^{2\alpha-1}
	\end{equation*}
	for $h\xi\in[-3\pi/2,3\pi/2]^d$. From Assumption~\ref{def121} we have $3-2\alpha \geq -\beta > 0$, and moreover we have $2\alpha - 1>0$, so we get an estimate of the type $Ch^{2\alpha-1}$. 
\end{proof}

Next we prove the following result. 
\begin{proposition}\label{prop-77}
Let $J_h$ and $K_h$ be defined using biorthogonal Riesz sequences satisfying Assumption~{\rm\ref{assume-72}} and let $G_0$ satisfy Assumption~{\rm\ref{def121}} with parameters $\alpha$ and $\beta$. 
Let the operators $H_0$ and $H_{0,h}$ be defined by \eqref{def-H0} and \eqref{def-H0h}, respectively.
Let 
\begin{equation*} 
\gamma=\min\{2\alpha-1,2\alpha-\beta-1\}.
\end{equation*}
For each compact set $K\subset\bC\setminus[0,\infty)$,
there exist $C>0$ and $h_0>0$ such that
\begin{equation}\label{new-H0-est}
\norm{J_h(H_{0,h}-z\identh)^{-1}K_h-
(H_0-z\ident)^{-1}}_{\cB(\cH)}\leq C h^{\gamma}
\end{equation}
for all $z\in K$ and all $0<h\leq h_0$.
\end{proposition}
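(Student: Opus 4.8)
The plan is to compare the two resolvents through the intermediate operator obtained by conjugating the continuous resolvent with the projection $J_hK_h$, using the triangle inequality:
\begin{equation*}
\norm{J_h(H_{0,h}-z)^{-1}K_h-(H_0-z)^{-1}} \leq \norm{J_h(H_{0,h}-z)^{-1}K_h-J_hK_h(H_0-z)^{-1}} + \norm{(J_hK_h-\ident)(H_0-z)^{-1}}.
\end{equation*}
The second term is exactly $Ch^{\alpha}$ by Lemma~\ref{lemma-74}, and since $\alpha \leq 1+\beta$ gives $2\alpha-\beta-1 \leq \alpha$ (and $\alpha > 1/2$ gives $2\alpha-1 < 2\alpha \le \alpha + \alpha$; more to the point $2\alpha - 1 \ge \alpha$ iff $\alpha \ge 1$, so one must check $\gamma \le \alpha$ holds in all allowed parameter ranges — from $\alpha \le 1+\beta$ we get $2\alpha - \beta - 1 \le \alpha$, so indeed $\gamma \le \alpha$), this term is absorbed into $Ch^{\gamma}$. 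As in Lemma~\ref{lemma-74}, it suffices to treat $K = \{-1\}$, since $(H_0+\ident)(H_0-z)^{-1}$ and $(H_{0,h}+\identh)(H_{0,h}-z)^{-1}$ are uniformly bounded on $K$.

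For the first term, I would factor out $J_h$ and $K_h$, which are uniformly bounded by \eqref{J-norm} and \eqref{K-norm}, so it suffices to bound $\norm{(H_{0,h}+\identh)^{-1}K_h - K_h(H_0+\ident)^{-1}}_{\cB(\cH,\cH_h)}$. Transferring to Fourier space via $\sfF_h$ and $\cF$, and using \eqref{FKh} which gives $\sfF_h K_h \cF^{\ast} g = (2\pi)^{d/2}\Psi_h g$, the operator $(H_{0,h}+\identh)^{-1}K_h - K_h(H_0+\ident)^{-1}$ becomes, for $g\in\cS(\bR^d)$,
\begin{equation*}
\bigl(\sfF_h(\,\cdot\,)\cF^{\ast}g\bigr)(\xi) = (2\pi)^{d/2}\sum_{j\in\bZ^d}\overline{\widehat{\psi}_0(h\xi+2\pi j)}\Bigl((G_{0,h}(\xi)+1)^{-1} - (G_0(\xi+\tfrac{2\pi}{h}j)+1)^{-1}\Bigr)g(\xi+\tfrac{2\pi}{h}j),
\end{equation*}
using that $G_{0,h}$ is $(2\pi/h)\bZ^d$-periodic. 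By the support condition \eqref{supp-cond-psi}, only the terms with $\abs{j}\le 1$ contribute. For $j = 0$ and $h\xi \in \supp(\widehat{\psi}_0) \subseteq [-3\pi/2,3\pi/2]^d$, the factor $(G_{0,h}(\xi)+1)^{-1} - (G_0(\xi)+1)^{-1}$ is bounded by $Ch^{\gamma}$ directly by Lemma~\ref{lemma:diffresG}; combined with boundedness of $\widehat{\psi}_0$, this term contributes $Ch^{\gamma}\norm{g}$. For each $j$ with $\abs{j} = 1$, I would argue as in the proof of Lemma~\ref{lemma-74}: on the relevant support, $\abs{\xi + \tfrac{2\pi}{h}j} \geq c_1 h^{-1}$, so $(G_0(\xi+\tfrac{2\pi}{h}j)+1)^{-1} \leq C h^{\alpha}$ by Assumption~\ref{def121}(2), while $(G_{0,h}(\xi)+1)^{-1} \leq 1$; hence the difference is $\leq C h^{\alpha} \le Ch^{\gamma}$ on that support, and squaring/integrating (using that $g(\cdot + \tfrac{2\pi}{h}j)$ has $L^2$ norm $\norm{g}$) gives the bound $Ch^{\gamma}\norm{g}$. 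Summing the finitely many $\abs{j}\le 1$ terms and using density of $\cS(\bR^d)$ finishes the estimate of the first term.

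The main obstacle is the $j=0$ term of the first bracket, where one genuinely needs the sharp pointwise estimate of Lemma~\ref{lemma:diffresG} for the difference of the two resolvent symbols at the \emph{same} point $\xi$ ranging over $h\xi \in [-3\pi/2, 3\pi/2]^d$ — the off-diagonal ($\abs{j}=1$) terms and the $(J_hK_h - \ident)$ term are comparatively soft, relying only on the lower bound of Assumption~\ref{def121}(2) away from the origin. A minor point requiring care is bookkeeping of which of $2\alpha-1$ and $2\alpha-\beta-1$ is the binding constraint across the admissible parameter region (the two cases $\beta \geq 0$ and $\beta < 0$ already handled inside Lemma~\ref{lemma:diffresG}), and verifying that the Lemma~\ref{lemma-74} contribution $h^{\alpha}$ does not degrade the rate, i.e. $\gamma \leq \alpha$, which follows from $\alpha \leq 1 + \beta$.
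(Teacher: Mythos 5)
Your strategy coincides with the paper's proof: the same splitting into $(J_hK_h-\ident)(H_0-z\ident)^{-1}$ (handled by Lemma~\ref{lemma-74} together with $\gamma\leq\alpha$, which you correctly derive from $\alpha\leq1+\beta$) plus a commutator-type term analyzed in Fourier variables, with Lemma~\ref{lemma:diffresG} carrying the diagonal $j=0$ term and an $O(h^{\alpha})$ bound carrying the off-diagonal terms; the only cosmetic difference is that you factor out $J_h$ and work with $\sfF_h(\cdot)\cF^{\ast}$ on $\bT^d_h$, whereas the paper keeps $\cF J_h(\cdot)\cF^{\ast}$ and hence the extra factor $\widehat{\vp}_0(h\xi)$.

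However, your estimate of the off-diagonal terms has a genuine gap. You bound $(G_0(\xi+\tfrac{2\pi}{h}j)+1)^{-1}\leq Ch^{\alpha}$ and $(G_{0,h}(\xi)+1)^{-1}\leq1$, and conclude that the difference is $\leq Ch^{\alpha}$. That inference is invalid: from $\abs{A}\leq Ch^{\alpha}$ and $\abs{B}\leq1$ you only get $\abs{A-B}\leq1+Ch^{\alpha}=O(1)$, which would make the $j\neq0$ contribution $O(1)$ and destroy the result. What is needed, and what the paper proves, is that the discrete symbol term is itself $O(h^{\alpha})$ there: by \eqref{res-G0h} one has $(G_{0,h}(\xi)+1)^{-1}\leq C\abs{\xi}^{-\alpha}$ for $h\xi\in[-3\pi/2,3\pi/2]^d$, and on the support of a $j\neq0$ term the point $h\xi$ is bounded away from the origin — in your formulation because $h\xi\in[-\pi,\pi]^d$ and $h\xi+2\pi j\in\supp\widehat{\psi}_0\subseteq[-3\pi/2,3\pi/2]^d$ with $j\neq0$ force $\abs{h\xi}\geq\pi/2$, in the paper's because $h\xi$ lies in $M=(\supp\widehat{\psi}_0-2\pi j)\cap\supp\widehat{\vp}_0\subseteq\{\pi/4\leq\abs{\zeta}\leq3\pi/2\}$. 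Hence $\abs{\xi}\geq ch^{-1}$ and $(G_{0,h}(\xi)+1)^{-1}\leq Ch^{\alpha}$, and only then does the difference obey the $Ch^{\alpha}$ bound. Note that \eqref{res-G0h} rests on Assumption~\ref{def121}(2) and $\abs{\sin\eta}\geq c\abs{\eta}$ on $[-3\pi/4,3\pi/4]$, so the discrete term is not ``soft'' in the way your closing discussion suggests. With this repair your argument is complete and matches the paper's; the remaining points (reduction from general $z\in K$ to $z=-1$ via the symbol-level resolvent identity, finitely many contributing $j$, density) are handled correctly.
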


\begin{remark} \label{remark:maxorder}
	The largest possible value of $\gamma$ in Proposition~\ref{prop-77} is 2, which is attained precisely when $\beta\geq 1$ and $2\alpha = 3+\beta$.
\end{remark}

\begin{proof}
We start by proving the result for $K=\{-1\}$.
We have
\begin{align*}
J_h(H_{0,h}+&\identh)^{-1}K_h-
(H_0+\ident)^{-1}= \notag\\
&J_h(H_{0,h}+\identh)^{-1}K_h-
J_hK_h(H_0+\ident)^{-1}+(\ident-J_hK_h)(H_0+\ident)^{-1}. \label{propest}
\end{align*}
The term $(\ident-J_hK_h)(H_0+\ident)^{-1}$ is estimated in Lemma~\ref{lemma-74}.

Note that $\gamma\leq \alpha$ for 
the admissible values of $\alpha$ and $\beta$.
The result \eqref{new-H0-est} follows if we for some $h_0>0$ can establish the estimate
\begin{equation} \label{prop34est2}
\norm{J_h(H_{0,h}+\identh)^{-1}K_h-J_hK_h(H_0+\ident)^{-1}}_{\cB(\cH)} \leq Ch^\gamma, \quad 0<h\leq h_0.
\end{equation}
For this purpose, let $f=\cF^{\ast}g$ for $g\in\cS(\bR^d)$. Then
\begin{align}
\cF\bigl(J_h(H_{0,h}+\identh)^{-1}K_h-
J_hK_h(H_0+\ident)^{-1}\bigr)f&=
\cF J_h\sfF_h^{\ast}\sfF_h(H_{0,h}+\identh)^{-1}\sfF_h^{\ast}\sfF_h
K_h\cF^{\ast}g\notag\\
&\quad-\cF J_hK_h\cF^{\ast}\cF(H_0+\ident)^{-1}\cF^{\ast} g.
\label{rewrite1}
\end{align}
A modification of the computations leading to \eqref{FJK} yields
\begin{align}
\bigl[\cF J_h\sfF_h^{\ast}\sfF_h
(H_{0,h}&+\identh)^{-1}\sfF_h^{\ast}\sfF_h
K_h\cF^{\ast}g\bigr](\xi)=\notag\\
&(2\pi)^d\widehat{\vp}_0(h\xi)
(G_{0,h}(\xi)+1)^{-1}\sum_{j\in\bZ^d}
\overline{\widehat{\psi}_0(h\xi+2\pi j)}
g(\xi+\tfrac{2\pi}{h}j).\label{term1}
\end{align}
Analogously
\begin{align}
\bigl[\cF J_hK_h\cF^{\ast}&\cF(H_0+\ident)^{-1}\cF^{\ast}g\bigr](\xi)
=\notag\\
&(2\pi)^d\widehat{\vp}_0(h\xi)
\sum_{j\in\bZ^d}
\overline{\widehat{\psi}_0(h\xi+2\pi j)}
(G_0(\xi+\tfrac{2\pi}{h}j)+1)^{-1}
g(\xi+\tfrac{2\pi}{h}j).\label{term2}
\end{align}
Define  
\begin{equation}\label{def-S}
S_h(\xi)=(G_{0,h}(\xi)+1)^{-1}-
(G_0(\xi)+1)^{-1}.
\end{equation}
Note that
\begin{equation} \label{G0_per}
G_{0,h}(\xi+\tfrac{2\pi}{h}j)=G_{0,h}(\xi),\quad j\in\bZ^d.
\end{equation}
Inserting \eqref{term1} and \eqref{term2} into \eqref{rewrite1}, using the notation \eqref{def-S}, we get
\begin{equation}
\cF\bigl(J_h(H_{0,h}+\identh)^{-1}K_h-
J_hK_h(H_0+\ident)^{-1}\bigr)\cF^{\ast}g= \sum_{j\in\bZ^d} q_{j,h} \label{rewrite2}
\end{equation}
with
\begin{equation*}
	q_{j,h}(\xi) = (2\pi)^d\widehat{\vp}_0(h\xi)
	\overline{\widehat{\psi}_0(h\xi+2\pi j)}
	S_h(\xi+\tfrac{2\pi}{h}j)
	g(\xi+\tfrac{2\pi}{h}j), \quad \xi\in\bR^d, \quad j\in\bZ^d.
\end{equation*}
To estimate the norm of the right hand side of \eqref{rewrite2}, note that as in the proof of Lemma~\ref{lemma-74}, only the terms with $\abs{j}\leq 1$ contribute. First consider the $j=0$ term. We have $\supp \widehat\vp_0\subseteq [-3\pi/2,3\pi/2]^d$ by assumption. From Lemma~\ref{lemma:diffresG} we have $\abs{S_h(\xi)}\leq Ch^{\gamma}$ for $h\xi\in [-3\pi/2,3\pi/2]^d$, which gives
\begin{equation*}
	\norm{q_{0,h}}\leq Ch^{\gamma}\norm{g},
\end{equation*}
since $\widehat{\vp}_0$ and $\widehat{\psi}_0$ are bounded by assumption.

Next let $j\in\bZ^d$ with $\abs{j}=1$ be fixed. Define
\begin{equation*}
	M=((\supp\widehat{\psi}_0)-2\pi j)\cap \supp\widehat{\vp}_0.
\end{equation*}
By assumption we have
\begin{equation*}
	M\subseteq\{\zeta\in\bR^d : {\pi}/{4}\leq\abs{\zeta}
	\leq 3\pi/2\}.
\end{equation*}
Now let $h_0$ be sufficiently small, let $0 < h \leq h_0$, and let $h\xi\in M$. Then similar to \eqref{G0-shift} we have
\begin{equation*}
	(G_0(\xi + \tfrac{2\pi}{h}j) + 1)^{-1} \leq Ch^\alpha,
\end{equation*}
and from \eqref{G0_per} and \eqref{res-G0h} we have
\begin{equation*}
	(G_{0,h}(\xi + \tfrac{2\pi}{h}j) + 1)^{-1} \leq Ch^\alpha.
\end{equation*}
Estimating the two terms in $S_h(\xi+\tfrac{2\pi}{h}j)$ separately leads to
$\abs{S_h(\xi+\tfrac{2\pi}{h}j)}\leq C h^{\alpha}$ for
$h\xi\in M$ and $0<h\leq h_0$.
It follows that
\begin{equation*}
	\norm{q_{j,h}} \leq Ch^{\alpha}\norm{g},\quad 0<h\leq h_0.
\end{equation*}
Since the number of $j\in\bZ^d$ with $\abs{j}=1$ is finite, the result \eqref{prop34est2} follows by density and $\gamma\leq \alpha$.

To finalize the proof, consider a general compact set
$K\subset\bC\setminus[0,\infty)$. To this end note that
\begin{equation*}
	\abs{(G_0(\xi)-z)(G_0(\xi)+1)^{-1}}\leq C
	\quad\text{and}\quad
	\abs{(G_{0,h}(\xi)-z)(G_{0,h}(\xi)+1)^{-1}}\leq C
\end{equation*}
with $C$ independent of $z\in K$, $\xi\in\bR^d$, and $0<h\leq1$.
Thus the crucial estimates of 
\begin{equation*}
	(G_0(\xi)-z)^{-1}-(G_{0,h}(\xi)-z)^{-1}
\end{equation*}
leading to \eqref{diffG_first}, hold in the general case of $z\in K$. Further details are omitted. 
\end{proof} 

To include the fractional Laplacian $(-\Delta)^{\alpha/2}$ for $0<\alpha\leq1$ we have to modify some of the arguments. We start with the following estimate.
\begin{lemma}
Let $0<\alpha\leq1$. Then for $\xi,\zeta\in\bR^d$ we have
\begin{equation}\label{multi-d}
\bigl\lvert\abs{\xi+\zeta}^{\alpha}-\abs{\xi}^{\alpha}
\bigr\rvert\leq\abs{\zeta}^{\alpha}.
\end{equation}
\end{lemma}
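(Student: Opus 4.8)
The plan is to reduce the multidimensional statement to the one-dimensional subadditivity of the power function $t\mapsto t^{\alpha}$ on $[0,\infty)$. The key elementary fact is that for $0<\alpha\leq 1$ one has $(a+b)^{\alpha}\leq a^{\alpha}+b^{\alpha}$ for all $a,b\geq 0$. I would record this first, either by noting that $t\mapsto t^{\alpha}$ is concave on $[0,\infty)$ with value $0$ at the origin, or, even more directly, by the one-variable inequality $(1+s)^{\alpha}\leq 1+s^{\alpha}$ for $s\geq 0$ (which follows since the difference $1+s^{\alpha}-(1+s)^{\alpha}$ vanishes at $s=0$ and has non-negative derivative $\alpha s^{\alpha-1}-\alpha(1+s)^{\alpha-1}\geq 0$), applied with $s=b/a$ after dividing by $a^{\alpha}$ (the case $a=0$ being trivial).

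Next I would combine this with the triangle inequality in $\bR^d$ and the monotonicity of $t\mapsto t^{\alpha}$: since $\abs{\xi+\zeta}\leq\abs{\xi}+\abs{\zeta}$, we get
\begin{equation*}
\abs{\xi+\zeta}^{\alpha}\leq\bigl(\abs{\xi}+\abs{\zeta}\bigr)^{\alpha}\leq\abs{\xi}^{\alpha}+\abs{\zeta}^{\alpha},
\end{equation*}
hence $\abs{\xi+\zeta}^{\alpha}-\abs{\xi}^{\alpha}\leq\abs{\zeta}^{\alpha}$. To get the reverse bound, apply the same estimate with $\xi$ replaced by $\xi+\zeta$ and $\zeta$ replaced by $-\zeta$, giving $\abs{\xi}^{\alpha}-\abs{\xi+\zeta}^{\alpha}\leq\abs{-\zeta}^{\alpha}=\abs{\zeta}^{\alpha}$. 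Together these two inequalities yield $\bigl\lvert\abs{\xi+\zeta}^{\alpha}-\abs{\xi}^{\alpha}\bigr\rvert\leq\abs{\zeta}^{\alpha}$, which is \eqref{multi-d}.

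There is essentially no substantial obstacle here: the only nontrivial ingredient is the subadditivity $(a+b)^{\alpha}\leq a^{\alpha}+b^{\alpha}$, and the rest is the triangle inequality plus a symmetrization. If one wanted to keep the write-up self-contained, the mild "hard part" is just choosing the cleanest justification of subadditivity; I would go with the $(1+s)^{\alpha}\leq 1+s^{\alpha}$ argument since it needs nothing beyond elementary calculus.
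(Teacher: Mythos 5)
Your proof is correct and uses exactly the same key ingredient as the paper, namely the subadditivity $(a+b)^{\alpha}\leq a^{\alpha}+b^{\alpha}$ for $a,b\geq0$ and $0<\alpha\leq1$, combined with the triangle inequality and symmetrization; the paper simply leaves these "straightforward computations" to the reader. Your write-up just makes explicit what the paper's one-line proof alludes to.
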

\begin{proof}
This result follows from straightforward computations using the inequality $(a+b)^{\alpha}\leq
a^{\alpha}+b^{\alpha}$ 
for $a,b\geq0$ and $0\leq\alpha\leq1$.
\end{proof}

To include the non-differentiable term $\abs{\xi}^{\alpha}$ for $\tfrac12 < \alpha\leq1$ we introduce the following assumption.
\begin{assumption}\label{assume-abs}
	Assume 
	\begin{equation*}
	\tfrac12<\alpha\leq1,\quad \tilde{\beta}\geq0,\quad\text{and}\quad
	1+\tilde{\beta}<2\alpha.
	\end{equation*}
	Let $G_0(\xi)=\abs{\xi}^{\alpha}
	+\widetilde{G}_0(\xi)$, where 
	$\widetilde{G}_0\in C^1(\bR^d)$ is a real-valued and non-negative function. Assume 
\begin{enumerate}[\ \ (1)] 
\item $\widetilde{G}_0(0)=0$,
\item there exist $c>0$ and $c_0>0$ such that $\abs{\nabla \widetilde{G}_0(\xi)}\leq c \abs{\xi}^{\tilde{\beta}}$, for all $\xi\in\bR^d$ with $\abs{\xi}\geq c_0$,
\item $\widetilde{G}_0(\xi_1,\xi_2,\ldots,\xi_d) = \widetilde{G}_0(\abs{\xi_1},\abs{\xi_2},\ldots,\abs{\xi_d})$, for all $\xi\in\bR^d$.
\end{enumerate}
\end{assumption}

\begin{lemma}\label{new-frac}
Let $\vp_0$ and $\psi_0$ satisfy Assumption~{\rm\ref{assume-72}}. 
Let $G_0$ satisfy Assumption~{\rm\ref{assume-abs}}, also allowing $0<\alpha\leq1$, and let $H_0$ be defined by \eqref{def-H0}.
Then there exist $C>0$ and $h_0>0$ such that
\begin{equation*} 
\norm{(J_hK_h-\ident)(H_0+\ident)^{-1}}_{\cB(\cH)}\leq C h^{\alpha},
\quad 0<h\leq h_0.
\end{equation*}
\end{lemma}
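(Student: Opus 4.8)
The plan is to mimic the structure of the proof of Lemma~\ref{lemma-74}, replacing the single use of Assumption~\ref{def121}(2) by the decomposition $G_0=\abs{\xi}^{\alpha}+\widetilde G_0$ together with the elementary inequality \eqref{multi-d}. As in Lemma~\ref{lemma-74}, it suffices to take $K=\{-1\}$, and to work on the Fourier side: for $g\in\cS(\bR^d)$ the expression $\bigl(\cF(J_hK_h-\ident)(H_0+\ident)^{-1}\cF^{\ast}g\bigr)(\xi)$ is again given by \eqref{FJK} applied to $(G_0(\cdot)+1)^{-1}g$ minus $(G_0(\xi)+1)^{-1}g(\xi)$. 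The support conditions \eqref{supp-cond-phi}, \eqref{supp-cond-psi} and the observation \eqref{supp-intersect} reduce everything to (a) the $j=0$ term together with the subtracted term, and (b) finitely many terms with $\abs{j}=1$.

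For (a), on the region $h\xi\in[-\pi/2,\pi/2]^d$ the biorthogonality identity \eqref{BIO-cond} makes the $j=0$ term cancel the subtracted term exactly, just as before. On the complement $h\xi\notin[-\pi/2,\pi/2]^d$, we have $\abs{\xi}\geq c/h$ for some $c>0$, hence $\abs{\xi}\geq c_0$ once $h\leq h_0$ is small; then $G_0(\xi)\geq\abs{\xi}^{\alpha}\geq c h^{-\alpha}$ since $\widetilde G_0\geq 0$, so $(G_0(\xi)+1)^{-1}\leq C h^{\alpha}$. Boundedness of $\widehat\vp_0$ and $\widehat\psi_0$ then bounds the $L^2$-norms of both the $j=0$ term and the subtracted term by $Ch^{\alpha}\norm{g}$. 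For (b), fix $j$ with $\abs{j}=1$; on the set $h\xi\in\supp(\widehat\vp_0)\cap\supp(\widehat\psi_0(\cdot-2\pi j))$ one has $\abs{h\xi+2\pi j}\geq c_1>0$, hence $\abs{\xi+\tfrac{2\pi}{h}j}\geq c_1 h^{-1}$, so again $G_0(\xi+\tfrac{2\pi}{h}j)\geq\bigl\lvert\xi+\tfrac{2\pi}{h}j\bigr\rvert^{\alpha}\geq c h^{-\alpha}$ for $h\leq h_0$, giving $(G_0(\xi+\tfrac{2\pi}{h}j)+1)^{-1}\leq Ch^{\alpha}$; combined with the boundedness of $\widehat\vp_0,\widehat\psi_0$ and a change of variables $\xi\mapsto\xi+\tfrac{2\pi}{h}j$ in the integral, this term contributes $Ch^{\alpha}\norm{g}$ as well. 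Summing the finitely many contributions and using density of $\cS(\bR^d)$ in $\cH$ finishes the proof.

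The observation to stress is that nothing here actually uses differentiability of $G_0$ or the growth bound on $\nabla\widetilde G_0$ — only the lower bound $G_0(\xi)\geq\abs{\xi}^{\alpha}$, which holds because $\widetilde G_0\geq 0$. In that sense inequality \eqref{multi-d} is not strictly needed for this particular lemma (it will be needed later for the analogue of Lemma~\ref{lemma:diffresG} in the case $\tfrac12<\alpha\leq1$), and the only mild point to be careful about is that $\abs{\xi}\geq c/h$ on the relevant region translates into the claimed lower bounds for $G_0$, i.e. that the constant $c_0$ of Assumption~\ref{assume-abs} is eventually dominated once $h_0$ is chosen small. There is no real obstacle; the argument is a direct transcription of Lemma~\ref{lemma-74} with the lower bound $G_0(\xi)\ge|\xi|^\alpha$ in place of Assumption~\ref{def121}(2).
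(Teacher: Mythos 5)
Your proof is correct and follows the same route the paper takes: it reruns the proof of Lemma~\ref{lemma-74} verbatim, replacing the use of Assumption~\ref{def121}(2) by the pointwise lower bound $G_0(\xi)\geq\abs{\xi}^{\alpha}$ coming from $\widetilde G_0\geq 0$, both for the $j=0$/subtracted terms off $[-\pi/2,\pi/2]^d$ and for the $\abs{j}=1$ terms (the analogue of \eqref{G0-shift}). Your side remarks are also accurate: neither the $C^1$ regularity nor \eqref{multi-d} is needed here, and the constant $c_0$ plays no role since the lower bound on $G_0$ holds for all $\xi$.
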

\begin{proof}
The proof of Lemma~\ref{lemma-74} can be modified to prove this result. We observe that 
\begin{equation*}
(G_0(\xi)+1)^{-1}\leq(\abs{\xi}^{\alpha}+1)^{-1}\leq C h^{\alpha}
\end{equation*}
for $h\xi\notin[-3\pi/2,3\pi/2]^d$. In the same manner one shows that the estimate \eqref{G0-shift} holds. Further details are omitted. 
\end{proof}

\begin{proposition}\label{prop126}
	Let $J_h$ and $K_h$ be defined using biorthogonal Riesz sequences satisfying Assumption~{\rm\ref{assume-72}} and let $G_0$ satisfy Assumption~{\rm\ref{assume-abs}} with parameters $\alpha$ and $\tilde{\beta}$. Let the operators $H_0$ and $H_{0,h}$ be defined by \eqref{def-H0} and \eqref{def-H0h}, respectively. For each compact set $K\subset\bC\setminus[0,\infty)$,
	there exist $C>0$ and $h_0>0$ such that
	\begin{equation*} 
	\norm{J_h(H_{0,h}-z\identh)^{-1}K_h-
		(H_0-z\ident)^{-1}}_{\cB(\cH)}\leq C h^{2\alpha-\tilde{\beta}-1}
	\end{equation*}
	for all $z\in K$ and all $0<h\leq h_0$.
\end{proposition}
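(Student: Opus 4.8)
The plan is to follow the structure of the proof of Proposition~\ref{prop-77} essentially verbatim, replacing the appeal to Lemma~\ref{lemma-74} by the corresponding statement in Lemma~\ref{new-frac}, and replacing Lemma~\ref{lemma:diffresG} by an analogous estimate valid under Assumption~\ref{assume-abs}. First I would reduce to the case $K=\{-1\}$ exactly as before, since $(H_0-z\ident)(H_0+\ident)^{-1}$ and its discrete analogue are uniformly bounded for $z$ in a compact subset of $\bC\setminus[0,\infty)$; the final paragraph of the proof of Proposition~\ref{prop-77} transfers with no change. Then I would write the splitting
\begin{equation*}
J_h(H_{0,h}+\identh)^{-1}K_h-(H_0+\ident)^{-1}
=\bigl(J_h(H_{0,h}+\identh)^{-1}K_h-J_hK_h(H_0+\ident)^{-1}\bigr)+(\ident-J_hK_h)(H_0+\ident)^{-1},
\end{equation*}
and control the last term by Lemma~\ref{new-frac}, which gives a bound of order $h^{\alpha}$; since $2\alpha-\tilde\beta-1\leq\alpha$ (because $\alpha\leq1+\tilde\beta$, which here follows from $\tilde\beta\ge0$ and $\alpha\le1$), this term is harmless.

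The heart of the matter is the estimate replacing Lemma~\ref{lemma:diffresG}: I need
\begin{equation*}
\abs{(G_{0,h}(\xi)+1)^{-1}-(G_0(\xi)+1)^{-1}}\leq Ch^{2\alpha-\tilde\beta-1},\quad h\xi\in[-3\pi/2,3\pi/2]^d.
\end{equation*}
Using $G_0=\abs{\xi}^{\alpha}+\widetilde G_0$ and the corresponding decomposition $G_{0,h}(\xi)=\abs{\xi_h}^{\alpha}+\widetilde G_{0,h}(\xi)$, where $\xi_h$ denotes the vector with components $\tfrac2h\sin(\tfrac h2\xi_i)$, I would estimate $\abs{G_{0,h}(\xi)-G_0(\xi)}$ by the triangle inequality into two pieces. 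For the $\widetilde G_0$ piece I reuse the Taylor expansion \eqref{expand}, writing $\xi_h=\xi+\zeta_h$ with $\abs{\zeta_h}\leq Ch^2\abs{\xi}^3$, and apply Assumption~\ref{assume-abs}(2) exactly as in the case $\beta\geq0$ of Lemma~\ref{lemma:diffresG}, getting a contribution bounded by $C(h^2\abs{\xi}^{3+\tilde\beta}+h^{2+2\tilde\beta}\abs{\xi}^{3+3\tilde\beta})$. For the $\abs{\xi}^{\alpha}$ piece I use \eqref{multi-d} from the preceding lemma: $\bigl\lvert\abs{\xi_h}^{\alpha}-\abs{\xi}^{\alpha}\bigr\rvert\leq\abs{\zeta_h}^{\alpha}\leq Ch^{2\alpha}\abs{\xi}^{3\alpha}$. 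Then, as in \eqref{diff-expr}, I sandwich $G_0(\xi)-G_{0,h}(\xi)$ between $(G_{0,h}(\xi)+1)^{-1}$ and $(G_0(\xi)+1)^{-1}$, each of which is $\leq C\abs{\xi}^{-\alpha}$ for $h\xi\in[-3\pi/2,3\pi/2]^d$ — the bound on $(G_0(\xi)+1)^{-1}$ is immediate from $G_0(\xi)\geq\abs{\xi}^{\alpha}$, and the bound on $(G_{0,h}(\xi)+1)^{-1}$ follows from $\abs{\sin\eta}\geq c\abs{\eta}$ on $[-3\pi/4,3\pi/4]$, giving $G_{0,h}(\xi)\geq\abs{\xi_h}^{\alpha}\geq c\abs{\xi}^{\alpha}$ on that range. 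Collecting powers, the $\widetilde G_0$ contribution yields terms $(h\abs{\xi})^{3+\tilde\beta-2\alpha}\bigl(1+(h\abs{\xi})^{2\tilde\beta}\bigr)h^{2\alpha-\tilde\beta-1}$, which are bounded by $Ch^{2\alpha-\tilde\beta-1}$ because $3+\tilde\beta-2\alpha>1+\tilde\beta-2\alpha>-2\alpha\geq-2$ and in fact $3+\tilde\beta-2\alpha\geq 0$ here (since $2\alpha\leq2\leq3\leq3+\tilde\beta$); the $\abs{\xi}^{\alpha}$ contribution yields $(h\abs{\xi})^{3\alpha-2\alpha}h^{\alpha}=(h\abs{\xi})^{\alpha}h^{\alpha}$, bounded by $Ch^{\alpha}$, which dominates $h^{2\alpha-\tilde\beta-1}$ since $\alpha\geq2\alpha-\tilde\beta-1$. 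Thus the overall rate is $h^{2\alpha-\tilde\beta-1}$, and I should note $2\alpha-\tilde\beta-1>0$ from Assumption~\ref{assume-abs}.

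With this replacement estimate in hand, the rest of the proof copies the argument of Proposition~\ref{prop-77}: pass to Fourier space, write the difference as the sum $\sum_{j}q_{j,h}$ with
\begin{equation*}
q_{j,h}(\xi)=(2\pi)^d\widehat\vp_0(h\xi)\overline{\widehat\psi_0(h\xi+2\pi j)}S_h(\xi+\tfrac{2\pi}h j)g(\xi+\tfrac{2\pi}h j),\qquad S_h(\xi)=(G_{0,h}(\xi)+1)^{-1}-(G_0(\xi)+1)^{-1},
\end{equation*}
observe via \eqref{supp-intersect} that only $\abs{j}\leq1$ survive, bound the $j=0$ term by $Ch^{2\alpha-\tilde\beta-1}\norm{g}$ using the new estimate on the support of $\widehat\vp_0$, and bound each $\abs{j}=1$ term by $Ch^{\alpha}\norm{g}$ using that on the relevant set $M$ both $(G_0(\xi+\tfrac{2\pi}hj)+1)^{-1}$ and $(G_{0,h}(\xi+\tfrac{2\pi}hj)+1)^{-1}$ are $\leq Ch^{\alpha}$ — the former from Lemma~\ref{new-frac}'s observation, the latter from \eqref{res-G0h}-type reasoning combined with \eqref{G0_per}. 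Summing the finitely many terms and using density of $\cS(\bR^d)$ together with $2\alpha-\tilde\beta-1\leq\alpha$ gives the claimed bound. I expect the only real obstacle to be bookkeeping the exponents in the replacement estimate to confirm that $2\alpha-\tilde\beta-1$ is indeed the governing power and that all residual factors $(h\abs{\xi})^{\text{positive}}$ stay bounded on $[-3\pi/2,3\pi/2]^d$; everything else is a direct transcription of the earlier proofs.
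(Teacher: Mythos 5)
Your proposal is correct and follows essentially the same route as the paper: decompose $G_0=\abs{\xi}^{\alpha}+\widetilde G_0$, handle the non-smooth part via \eqref{multi-d} and the smooth part via the Taylor argument of Lemma~\ref{lemma:diffresG}, obtain the pointwise bound \eqref{est-tilde}, and then rerun the machinery of Proposition~\ref{prop-77} with Lemma~\ref{new-frac} in place of Lemma~\ref{lemma-74}. The only quibble is the phrase that the $Ch^{\alpha}$ contribution ``dominates'' $h^{2\alpha-\tilde\beta-1}$ — it is the other way around ($h^{\alpha}\leq h^{2\alpha-\tilde\beta-1}$ for small $h$), but your conclusion that the governing rate is $h^{2\alpha-\tilde\beta-1}$ is exactly right.
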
 
\begin{proof}
Write $G_{\alpha}(\xi)=\abs{\xi}^{\alpha}$. We now modify the arguments in the proof of Lemma~\ref{lemma:diffresG}.
Let $G_{0,h}$ be defined by \eqref{def-G0h} and  use analogous definitions for $G_{\alpha,h}$ and $\widetilde{G}_{0,h}$. Write
$G_{0,h}(\xi)=G_{\alpha,h}(\xi)+\widetilde{G}_{0,h}(\xi)$, then we have
\begin{equation*}
G_{0,h}(\xi)-G_{0}(\xi)=G_{\alpha,h}(\xi)-G_{\alpha}(\xi)
+\widetilde{G}_{0,h}(\xi)-\widetilde{G}_{0}(\xi).
\end{equation*}
It follows from \eqref{expand}, \eqref{multi-d}, and $\abs{\zeta_h}\leq C h^2\abs{\xi}^3$ that we have
\begin{equation*}
\abs{G_{\alpha,h}(\xi)-G_{\alpha}(\xi)}
\leq C h^{2\alpha}\abs{\xi}^{3\alpha}.
\end{equation*}
Since condition (2) in Assumption~\ref{def121} is not used 
in the proof of the estimates \eqref{diff-est} 
and \eqref{diff-est2}, we have for $\tilde{\beta}\geq0$ the estimate
\begin{equation*}
\abs{\widetilde{G}_{0,h}(\xi)-\widetilde{G}_{0}(\xi)}
\leq C\bigl(
h^2\abs{\xi}^{3+\tilde{\beta}}+h^{2+2\tilde{\beta}}\abs{\xi}^{3+3\beta}
\bigr).
\end{equation*}

We also need two resolvent estimates. Since $\widetilde{G}_0(\xi)\geq0$ we have
\begin{equation*}
(G_0(\xi)+1)^{-1}\leq 
(G_{\alpha}(\xi)+1)^{-1}\leq\abs{\xi}^{-\alpha}
\end{equation*}
and for $h\xi\in[-3\pi/2,3\pi/2]^d$
\begin{equation*}
(G_{0,h}(\xi)+1)^{-1}\leq 
(G_{\alpha,h}(\xi)+1)^{-1}\leq C\abs{\xi}^{-\alpha}
\end{equation*}
by the estimates leading to \eqref{diff-est2}.

Combining the estimates obtained above, we have for $h\xi\in[-3\pi/2,3\pi/2]^d$ the following result: 
\begin{equation}
\abs{(G_{0,h}(\xi)+1)^{-1}-(G_{0}(\xi)+1)^{-1}}\leq
C\bigl(h^{2\alpha}\abs{\xi}^{\alpha}
 +h^2\abs{\xi}^{3+\tilde{\beta}-2\alpha}
 +h^{2+2\tilde{\beta}}\abs{\xi}^{3+3\tilde{\beta}-2\alpha}\bigr).
\label{est-tilde}
\end{equation}

To finish, one can repeat the arguments in the proofs of Lemma~\ref{lemma:diffresG} and Proposition~\ref{prop-77} (with Lemma~\ref{new-frac} in place of Lemma~\ref{lemma-74}) to obtain estimates of type $Ch^\alpha$ and $Ch^{2\alpha-\tilde{\beta}-1}$. Note that for the admissible
values of $\alpha$ and $\tilde{\beta}$ we have 
\begin{equation*}
2\alpha-\tilde{\beta}-1=\min\{\alpha, 2\alpha-\tilde{\beta}-1\}. \qedhere
\end{equation*}
\end{proof}

The following proposition includes the fractional Laplacian $(-\Delta)^{\alpha/2}$ for $0 < \alpha \leq \tfrac12$, which is not part of Propositions~\ref{prop-77} and \ref{prop126}. Moreover, it gives the better rate $h^\alpha$ for the fractional Laplacian for $\tfrac12 < \alpha\leq 1$, compared to the rate $h^{2\alpha-1}$ from Proposition~\ref{prop126}.

\begin{proposition}\label{prop410}	
	Let $J_h$ and $K_h$ be defined using biorthogonal Riesz sequences satisfying Assumption~{\rm\ref{assume-72}} and let $G_0(\xi)=\abs{\xi}^{\alpha}$ for $0<\alpha\leq 1$. Let the operators $H_0$ and $H_{0,h}$ be defined by \eqref{def-H0} and \eqref{def-H0h}, respectively. For each compact set $K\subset\bC\setminus[0,\infty)$, there exist $C>0$ and $h_0>0$ such that
	\begin{equation*} 
	\norm{J_h(H_{0,h}-z\identh)^{-1}K_h-
		(H_0-z\ident)^{-1}}_{\cB(\cH)}\leq C h^{\alpha}
	\end{equation*}
	for all $z\in K$ and all $0<h\leq h_0$.
\end{proposition}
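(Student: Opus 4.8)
The plan is to follow the template established by Proposition~\ref{prop-77}, but replace the key differentiability-based estimate of Lemma~\ref{lemma:diffresG} with the sub-additivity inequality \eqref{multi-d}, which is exactly the tool designed to handle the non-smooth symbol $G_0(\xi)=\abs{\xi}^\alpha$ for $0<\alpha\le 1$. First I would reduce to the resolvent point $z=-1$, since $(H_0+\ident)(H_0-z\ident)^{-1}$ and its discrete analogue are uniformly bounded in operator norm for $z$ in a fixed compact $K\subset\bC\setminus[0,\infty)$; the general $z$ then follows as at the end of the proof of Proposition~\ref{prop-77}. Next I would split
\begin{equation*}
J_h(H_{0,h}+\identh)^{-1}K_h-(H_0+\ident)^{-1}
=\bigl(J_h(H_{0,h}+\identh)^{-1}K_h-J_hK_h(H_0+\ident)^{-1}\bigr)
+(J_hK_h-\ident)(H_0+\ident)^{-1},
\end{equation*}
and control the second term by Lemma~\ref{new-frac} (which applies since $G_0(\xi)=\abs{\xi}^\alpha$ is the special case $\widetilde{G}_0\equiv 0$ of Assumption~\ref{assume-abs}), giving a bound $Ch^\alpha$.

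For the first term I would pass to Fourier space exactly as in \eqref{rewrite1}--\eqref{rewrite2}, writing it as $\sum_{j\in\bZ^d}q_{j,h}$ with $q_{j,h}(\xi)=(2\pi)^d\widehat{\vp}_0(h\xi)\overline{\widehat{\psi}_0(h\xi+2\pi j)}S_h(\xi+\tfrac{2\pi}{h}j)g(\xi+\tfrac{2\pi}{h}j)$ and $S_h(\xi)=(G_{0,h}(\xi)+1)^{-1}-(G_0(\xi)+1)^{-1}$. By the support condition \eqref{supp-intersect} only the finitely many terms with $\abs{j}\le 1$ survive. The terms with $\abs{j}=1$ are handled verbatim as in Proposition~\ref{prop-77}: on the relevant set $M$ one has $\abs{\xi+\tfrac{2\pi}{h}j}\ge c_1h^{-1}$, hence both $(G_0(\xi+\tfrac{2\pi}{h}j)+1)^{-1}$ and $(G_{0,h}(\xi+\tfrac{2\pi}{h}j)+1)^{-1}$ are $\le Ch^\alpha$ (the latter using the lower bound $G_{0,h}(\xi)\ge C\abs{\xi}^\alpha$ from the estimate near \eqref{res-G0h}, valid because $\abs{\sin\eta}\ge c\abs{\eta}$ on $[-3\pi/4,3\pi/4]$), so $\norm{q_{j,h}}\le Ch^\alpha\norm{g}$.

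The crux is the $j=0$ term, where I need $\abs{S_h(\xi)}\le Ch^\alpha$ for $h\xi\in[-3\pi/2,3\pi/2]^d$. Using the identity \eqref{diff-expr} — that is, $S_h(\xi)=(G_{0,h}(\xi)+1)^{-1}(G_0(\xi)-G_{0,h}(\xi))(G_0(\xi)+1)^{-1}$ — together with the two resolvent bounds $(G_0(\xi)+1)^{-1}\le\min\{1,C\abs{\xi}^{-\alpha}\}$ and $(G_{0,h}(\xi)+1)^{-1}\le\min\{1,C\abs{\xi}^{-\alpha}\}$ on the indicated range, it remains to bound the numerator $\abs{G_0(\xi)-G_{0,h}(\xi)}=\bigl\lvert\abs{\xi}^\alpha-\bigl(\sum_{i}\abs{\tfrac{2}{h}\sin(\tfrac{h}{2}\xi_i)}^2\bigr)^{\alpha/2}\bigr\rvert$. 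By \eqref{multi-d} this is at most $\bigl(\sum_i\abs{\xi_i-\tfrac{2}{h}\sin(\tfrac{h}{2}\xi_i)}^2\bigr)^{\alpha/2}$, and each component difference is $\le C h^2\abs{\xi_i}^3$ by \eqref{expand}, giving $\abs{G_0(\xi)-G_{0,h}(\xi)}\le Ch^{2\alpha}\abs{\xi}^{3\alpha}$. Feeding this into \eqref{diff-expr} and estimating with the two resolvent bounds yields
\begin{equation*}
\abs{S_h(\xi)}\le C h^{2\alpha}\abs{\xi}^{3\alpha}\abs{\xi}^{-\alpha}\cdot\min\{1,\abs{\xi}^{-\alpha}\}
= C(h\abs{\xi})^{2\alpha}\cdot\abs{\xi}^{-\alpha+\min\{0,-\alpha\}}\cdot h^{?},
\end{equation*}
so one checks the exponents split off exactly $h^{2\alpha}\cdot(\text{bounded in }h\abs{\xi})\cdot\abs{\xi}^{\alpha}$, and since $\abs{\xi}\le C h^{-1}$ on the range this is $\le Ch^{2\alpha}h^{-\alpha}=Ch^\alpha$; more carefully, for $\abs{\xi}\ge c_0$ one uses $(G_0(\xi)+1)^{-1}\le C\abs{\xi}^{-\alpha}$ twice to get $\abs{S_h(\xi)}\le Ch^{2\alpha}\abs{\xi}^{3\alpha-2\alpha}=C(h\abs{\xi})^{\alpha}\le Ch^\alpha$, and for $\abs{\xi}\le c_0$ the bound is trivial. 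Hence $\norm{q_{0,h}}\le Ch^\alpha\norm{g}$. Summing the finitely many surviving terms and invoking density of $\cS(\bR^d)$ gives \eqref{prop34est2} with exponent $\alpha$, and combined with the Lemma~\ref{new-frac} bound this proves the estimate $Ch^\alpha$ at $z=-1$; the general compact $K$ follows as in Proposition~\ref{prop-77}. The main obstacle, as anticipated, is obtaining the sharp $h^\alpha$ rate (rather than $h^{2\alpha-1}$) for the $j=0$ term — it hinges on combining the cubic component-wise error $Ch^2\abs{\xi_i}^3$ from \eqref{expand} with the factor $\abs{\xi}^{-2\alpha}$ coming from the product of the two resolvents, and on the sub-additivity estimate \eqref{multi-d} being applied component-wise before taking the $\alpha/2$ power.
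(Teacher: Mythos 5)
Your proof is correct and follows essentially the same route as the paper, which obtains the key pointwise bound $\abs{S_h(\xi)}\leq Ch^{2\alpha}\abs{\xi}^{\alpha}\leq Ch^{\alpha}$ for $h\xi\in[-3\pi/2,3\pi/2]^d$ by specializing the argument of Proposition~\ref{prop126} to $\widetilde{G}_0\equiv 0$ (i.e.\ combining \eqref{multi-d}, \eqref{expand}, and the two resolvent bounds) and then rerunning the proof of Proposition~\ref{prop-77} with Lemma~\ref{new-frac} in place of Lemma~\ref{lemma-74}. The only blemish is the bookkeeping slip $h^{2\alpha}\abs{\xi}^{\alpha}=(h\abs{\xi})^{\alpha}$, which should read $h^{2\alpha}\abs{\xi}^{\alpha}=h^{\alpha}(h\abs{\xi})^{\alpha}$; this still yields $\leq Ch^{\alpha}$ on the stated range, so the conclusion stands.
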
 
\begin{proof} 
The proof is a minor modification of the proof of Proposition~\ref{prop126}. In that proof take $\widetilde{G}_0\equiv0$. Then the last two terms on the right hand side of \eqref{est-tilde} vanish, such that
\begin{equation*}
\abs{(G_{0,h}(\xi)+1)^{-1}-(G_{0}(\xi)+1)^{-1}}\leq
C h^{2\alpha}\abs{\xi}^{\alpha}.
\end{equation*}
This leads to 
\begin{equation*}
\abs{(G_{0,h}(\xi)+1)^{-1}-(G_{0}(\xi)+1)^{-1}}\leq C h^{\alpha}
\end{equation*}
for $h\xi\in[-3\pi/2,3\pi/2]^d$, and the result follows as in the proof of Proposition~\ref{prop-77} (with Lemma~\ref{new-frac} in place of Lemma~\ref{lemma-74}).
\end{proof}
\section{Adding a potential}\label{sec:withpotential}
Let $G_0$ be a multiplier satisfying the assumptions in Proposition~\ref{prop-77}, Proposition~\ref{prop126}, or Proposition~\ref{prop410}, and define $H_0$ and $H_{0,h}$ by \eqref{def-H0} and \eqref{def-H0h}, respectively.
We will add a potential to both $H_0$ and $H_{0,h}$ and obtain a norm resolvent convergence result. We use a fairly strong assumption on the potential. 
\begin{assumption}\label{assume-V}
Assume that $V\colon \bR^d\to\bR$ is bounded and H\"{o}lder continuous with exponent $\theta\in(0,1]$.
\end{assumption}

Let $\vp_0$ and $\psi_0$ satisfy Assumption~\ref{assume-72}.
We need an additional assumption on $\psi_0$.
\begin{assumption}\label{assume-beta}
Assume there exists $\tau>d$ such that
\begin{equation*}
\abs{\psi_0(x)}\leq C(1+\abs{x})^{-\tau}\quad\text{for all 
$x\in\bR^d$}.
\end{equation*}
\end{assumption}

We define the discretized potential as 
\begin{equation}
	V_h(k)=V(hk), \quad k\in\bZ^d. \label{discV}
\end{equation} 
Then $H=H_0+V$ is self-adjoint in $\cH$ with domain 
\begin{equation*}
\cD(H)=\cD(H_0)=\{f\in\cH: G_0\widehat{f}\in 
\cH\},
\end{equation*}
and $H_{h}=H_{0,h}+V_h$ is self-adjoint and bounded on $\cH_h$.

The following result is a variant of \cite[Lemma 2.6]{NT}.
\begin{proposition}\label{est-V}
Let $V$ satisfy Assumption~{\rm\ref{assume-V}} 
for a $\theta\in(0,1]$ 
and let $\psi_0$ satisfy Assumption~{\rm\ref{assume-beta}}.  
Then we have
\begin{equation*} 
\norm{V_hK_h-K_hV}_{\cB(\cH,\cH_h)}\leq Ch^{\theta'},
\end{equation*}
where $\theta'$ is given by 
\begin{equation}\label{theta'}
\frac{1}{\theta'}=\frac{1}{\theta}+\frac{1}{\tau-d}.
\end{equation}
\end{proposition}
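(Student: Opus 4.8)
The plan is to expand the operator $V_hK_h - K_hV$ in the ``lattice'' representation, i.e. to look at $((V_hK_h - K_hV)f)(k)$ for $f\in\cH$ and $k\in\bZ^d$, and to bound the resulting quantity using the Hölder continuity of $V$ together with the decay of $\psi_0$. Using the explicit formula $(K_hf)(k)=h^{-d}\ip{\psi_{h,k}}{f}$ and $V_h(k)=V(hk)$, we get
\begin{equation*}
((V_hK_h - K_hV)f)(k) = \frac{1}{h^d}\ip{\psi_{h,k}}{(V(hk) - V(\cdot))f}
= \frac{1}{h^d}\int_{\bR^d}\overline{\psi_{h,k}(x)}\,(V(hk)-V(x))\,f(x)\di x.
\end{equation*}
After the change of variables $x = h(k+y)$ (so $\psi_{h,k}(x) = \psi_0(y)$ and $\di x = h^d\di y$), this becomes $\int_{\bR^d}\overline{\psi_0(y)}\,(V(hk)-V(hk+hy))\,f(h(k+y))\di y$. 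By Assumption~\ref{assume-V}, $\abs{V(hk)-V(hk+hy)}\leq C\abs{hy}^\theta = C h^\theta\abs{y}^\theta$. The factor $\abs{y}^\theta$ grows, but Assumption~\ref{assume-beta} gives $\abs{\psi_0(y)}\leq C(1+\abs{y})^{-\tau}$ with $\tau>d$, so the weight $\abs{\psi_0(y)}\abs{y}^\theta$ is still integrable; however, to get the optimal exponent $\theta'$ one should not simply bound $\abs{y}^\theta$ by a constant times the decay but instead interpolate.

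The key step is therefore to treat $\cH\to\cH_h$ boundedness as an operator-norm estimate. I would write $V_hK_h - K_hV = R_h$ where $R_h$ has ``integral kernel'' (against $f\in L^2(\bR^d)$) given, in the scaled variables, by $r_h(k,x) = h^{-d}\overline{\psi_{h,k}(x)}(V(hk)-V(x))$, and estimate $\norm{R_h}_{\cB(\cH,\cH_h)}$ by a Schur-type test: bound $\sup_k h^d\int \abs{r_h(k,x)}\di x$ and $\sup_x \sum_k h^d\abs{r_h(k,x)}$ — or rather the $L^2$-adapted version, since $R_h$ maps into $\ell^2(h\bZ^d)$ with its weighted norm. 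Concretely, $\norm{R_hf}_{\cH_h}^2 = h^d\sum_k\abs{(R_hf)(k)}^2$, and for each $k$, $\abs{(R_hf)(k)}\leq \int w_h(y)\,\abs{f(h(k+y))}\di y$ with $w_h(y) = Ch^\theta\abs{y}^\theta\abs{\psi_0(y)}$ independent of $k$. Applying Minkowski's integral inequality (treating the $\ell^2_k$-norm of the convolution-like sum), one gets $\norm{R_hf}_{\cH_h}\leq \int w_h(y)\,\big(h^d\sum_k\abs{f(h(k+y))}^2\big)^{1/2}\di y$; the inner sum is a Riemann-type sum that is controlled by $\norm{f}_{L^2}$ up to losing a bit — this is where the decay $\tau>d$ is genuinely needed, to make $\int w_h$ finite, and is the technical heart of the argument. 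This reduces the bound to $\norm{R_h}_{\cB(\cH,\cH_h)}\leq C h^\theta\int \abs{y}^\theta\abs{\psi_0(y)}\di y$, which already gives rate $h^\theta$.

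To upgrade $h^\theta$ to the sharp rate $h^{\theta'}$ with $\frac{1}{\theta'} = \frac{1}{\theta}+\frac{1}{\tau-d}$, the trick is to split the $y$-integral at a radius $\rho = \rho(h)$ to be optimized: on $\abs{y}\leq\rho$ use $\abs{V(hk)-V(hk+hy)}\leq Ch^\theta\abs{y}^\theta\leq Ch^\theta\rho^\theta$, while on $\abs{y}>\rho$ use the crude bound $\abs{V(hk)-V(hk+hy)}\leq 2\norm{V}_\infty$ together with $\abs{\psi_0(y)}\leq C\abs{y}^{-\tau}$, so the tail contributes $\int_{\abs{y}>\rho}\abs{\psi_0(y)}\di y\leq C\rho^{d-\tau}$. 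Balancing $h^\theta\rho^\theta$ against $\rho^{-(\tau-d)}$ gives $\rho = h^{-\theta/(\theta+\tau-d)}$ and a combined rate $h^{\theta(\tau-d)/(\theta+\tau-d)} = h^{\theta'}$, which is exactly \eqref{theta'}.

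The main obstacle I anticipate is making the ``Riemann sum'' step rigorous: after the shift $x = h(k+y)$, the quantity $h^d\sum_{k\in\bZ^d}\abs{f(h(k+y))}^2$ is not literally $\norm{f}_{L^2}^2$ (it depends on $y$ and on how $f$ samples the shifted lattice $h\bZ^d + hy$), so one cannot pass it through Minkowski's inequality as a constant. The clean way around this is to avoid sampling $f$ pointwise altogether: keep $f$ inside an inner product and use duality, i.e. estimate $\abs{\ip{R_hf}{g}_{\cH_h}}$ for $g\in\cH_h$, turning the double sum/integral into $\int_{\bR^d}\big(\sum_k \overline{h^{-d}\psi_{h,k}(x)}\,(V(hk)-V(x))\,\overline{g(k)}\big) f(x)\di x$; by the support/decay of $\psi_0$ the inner sum over $k$ is, for each fixed $x$, a finite-overlap sum whose $\ell^2(dx)$-norm is bounded by $Ch^{\theta'}\norm{g}_{\cH_h}$ via the same split-at-$\rho$ argument, and then Cauchy–Schwarz in $x$ closes the estimate. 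I expect this duality reformulation, plus careful bookkeeping of the $h$-scaling of all norms, to be the only genuinely delicate point; everything else is the elementary split-and-optimize computation sketched above.
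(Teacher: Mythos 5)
Your proposal is correct and is essentially the paper's proof: the same kernel $K(x,k,h)=h^{-d}(V(hk)-V(x))\overline{\psi_0((x-hk)/h)}$, the same split of the integration region at a radius to be optimized (Hölder continuity near the diagonal, boundedness of $V$ plus the decay $\tau>d$ of $\psi_0$ in the tail), and the same balancing that yields $\theta'$. The paper simply applies Schur's test directly (bounding $\sup_k\int_{\bR^d}\abs{K}\di x$ and $\esup_x\sum_k\abs{K}$), which is exactly the ``$L^2$-adapted'' duality bound you arrive at, so your detour through Minkowski and Riemann sums — and the obstacle you correctly flag there — is avoided from the outset.
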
 
\begin{proof} 
Let $f\in \cH$. Then
\begin{equation*}
(V_hK_hf)(k)-(K_hVf)(k)=\int_{\bR^d}K(x,k,h)f(x)\di x, \quad k\in\bZ^d,
\end{equation*}
where
\begin{equation*}
K(x,k,h)=h^{-d}\bigl(V(hk)-V(x)\bigr)\overline{\psi_0((x-hk)/h)}.
\end{equation*}
Schur's test gives boundedness and a bound:
\begin{equation*}
\norm{V_hK_h-K_hV}_{\cB(\cH,\cH_h)}\leq\sqrt{K_1K_2},
\end{equation*}
where
\begin{equation*}
K_1=\sup_{k\in\bZ^d}\int_{\bR^d}\abs{K(x,k,h)}\di x
\end{equation*}
and
\begin{equation*}
K_2=\esup_{x\in\bR^d}\sum_{k\in\bZ^d}\abs{K(x,k,h)}.
\end{equation*}

We first estimate $K_1$. Let $\delta>0$. Then
\begin{equation*}
\int_{\bR^d}\abs{K(x,k,h)}\di x=\int_{\abs{x-hk}<\delta}\abs{K(x,k,h)}\di x
+\int_{\abs{x-hk}\geq\delta}\abs{K(x,k,h)}\di x.
\end{equation*}
Using Assumptions~\ref{assume-V} and~\ref{assume-beta} we get
\begin{equation*}
\int_{\abs{x-hk}<\delta}\abs{K(x,k,h)}\di x
\leq C\delta^{\theta}\int_{\abs{y}<\delta}h^{-d}\abs{\psi_0(y/h)}\di y
\leq C\delta^{\theta}\norm{\psi_0}_{L^1(\bR^d)}.
\end{equation*}
Next we have 
\begin{equation*}
\int_{\abs{x-hk}\geq\delta}\abs{K(x,k,h)}\di x\leq 
C\int_{\abs{h\eta}\geq\delta}\abs{\psi_0(\eta)}\di\eta\leq C (h/\delta)^{\tau-d},
\end{equation*}
where we again used Assumption~\ref{assume-beta}.
Thus we have an estimate
\begin{equation*}
K_1\leq C(\delta^{\theta}+(h/\delta)^{\tau-d}).
\end{equation*}
Let $\gamma\in(0,1)$ and take $\delta=h^{\gamma}$. Then to optimize we solve $\gamma\theta=(1-\gamma)(\tau-d)$ with respect to $\gamma$ to get
\begin{equation*}
\gamma=\frac{\tau-d}{\theta+\tau-d},
\end{equation*}
which gives \eqref{theta'} with $\theta'=\gamma\theta$.
The computations giving the estimate of $K_2$ are very similar to the ones for $K_1$ and give the same result. We omit the details.
\end{proof}

We now combine the 
results above 
to obtain the following main theorem.

\begin{theorem}\label{thm54}
Let $J_h$ and $K_h$ be defined using biorthogonal Riesz sequences satisfying Assumption~{\rm\ref{assume-72}}. Let the potential $V$ satisfy Assumption~{\rm\ref{assume-V}} and define $V_h$ by \eqref{discV}. If $V\not\equiv 0$, let $\psi_0$ satisfy
Assumption~{\rm\ref{assume-beta}} and let $\theta'$ be given by \eqref{theta'}. Let the multiplier $G_0$ satisfy one of three options:
\begin{enumerate}[\rm(i)]
	\item $G_0$ satisfies Assumption~{\rm\ref{def121}} with parameters $\alpha$ and $\beta$.
	\item $G_0$ satisfies Assumption~{\rm\ref{assume-abs}} with parameters $\alpha$ and $\tilde{\beta}$.
	\item $G_0(\xi)=\abs{\xi}^{\alpha}$ for $0<\alpha\leq 1$ and $\xi\in\bR^d$.
\end{enumerate}
Let 
\begin{equation} \label{new-gamma1}
	\gamma = \begin{cases}
		\min\{2\alpha-1,2\alpha-\beta-1,\theta'\} & \text{ for $G_0$ in {\rm(i)},} \\
		\min\{2\alpha-\tilde{\beta}-1,\theta'\} & \text{ for $G_0$ in {\rm(ii)},} \\
		\min\{\alpha,\theta'\} & \text{ for $G_0$ in {\rm(iii)}.} 
	\end{cases}
\end{equation}
Let $H=H_0+V$ and $H_h=H_{0,h}+V_h$ where the operators $H_0$ and $H_{0,h}$ are defined by \eqref{def-H0} and \eqref{def-H0h}, respectively. For each compact set $K\subset\bC\setminus\bR$, there exist $C>0$ and $h_0>0$ such that
\begin{equation}\label{thm1}
\norm{J_h(H_{h}-z\identh)^{-1}K_h-
(H-z\ident)^{-1}}_{\cB(\cH)}\leq C h^{\gamma}
\end{equation}
for all $z\in K$ and all $0<h\leq h_0$.
\end{theorem}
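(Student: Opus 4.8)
The plan is to combine the free-multiplier estimate from the relevant one of Propositions~\ref{prop-77}, \ref{prop126}, \ref{prop410} with the potential-commutator estimate of Proposition~\ref{est-V} via a resolvent identity, exploiting that $V$ is bounded so that $H_h - z\identh$ and $H - z\ident$ are invertible for all $z\in\bC\setminus\bR$ with $\norm{(H-z\ident)^{-1}}\leq \abs{\im z}^{-1}$ and similarly for $H_h$. First I would fix a compact $K\subset\bC\setminus\bR$ and a reference point, say $z_0=\I$; since $(H_0+V-z\ident)(H_0+V-z_0\ident)^{-1}$ and its discrete analogue are uniformly bounded in norm for $z\in K$ (as $V$ is bounded), it suffices to prove \eqref{thm1} for $z=z_0$, exactly as in the proofs of Propositions~\ref{prop-77} and~\ref{prop126}. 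Also, by those same propositions (take $K=\{z_0\}$ there, noting $z_0\notin[0,\infty)$), the estimate
\begin{equation*}
\norm{J_h(H_{0,h}-z_0\identh)^{-1}K_h-(H_0-z_0\ident)^{-1}}_{\cB(\cH)}\leq Ch^{\gamma_0}
\end{equation*}
holds, where $\gamma_0$ is $\min\{2\alpha-1,2\alpha-\beta-1\}$, $2\alpha-\tilde\beta-1$, or $\alpha$ according to which of the three options $G_0$ satisfies; note $\gamma=\min\{\gamma_0,\theta'\}$ (or $\gamma_0$ when $V\equiv0$).

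The main step is a resolvent-type decomposition. Abbreviate $R_h = (H_h-z_0\identh)^{-1}$, $R_{0,h}=(H_{0,h}-z_0\identh)^{-1}$, $R=(H-z_0\ident)^{-1}$, $R_0=(H_0-z_0\ident)^{-1}$. Using $H_h=H_{0,h}+V_h$ and $H=H_0+V$, I would write
\begin{equation*}
R_h = R_{0,h} - R_{0,h} V_h R_h,\qquad R = R_0 - R_0 V R.
\end{equation*}
Then
\begin{equation*}
J_h R_h K_h - R = \bigl(J_h R_{0,h} K_h - R_0\bigr) - \bigl(J_h R_{0,h} V_h R_h K_h - R_0 V R\bigr).
\end{equation*}
The first bracket is $O(h^{\gamma_0})$ by the free proposition. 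For the second bracket I would insert $K_h J_h = \identh$ (from \eqref{KhJh}) and telescope:
\begin{align*}
J_h R_{0,h} V_h R_h K_h - R_0 V R
&= \bigl(J_h R_{0,h} K_h - R_0\bigr) V J_h R_h K_h \\
&\quad + R_0 \bigl(K_h^{\,\dagger}\text{-type terms}\bigr),
\end{align*}
so more precisely I would write
\begin{equation*}
J_h R_{0,h} V_h R_h K_h - R_0 V R = A_1 + A_2 + A_3,
\end{equation*}
where $A_1 = (J_h R_{0,h} K_h - R_0)\, V\, J_h R_h K_h$, then using $J_h R_{0,h} V_h R_h K_h = J_h R_{0,h}(V_h K_h)R_h K_h$ one sets $A_2 = J_h R_{0,h}(V_h K_h - K_h V) J_h R_h K_h$ — wait, to be careful: $J_h R_{0,h} V_h R_h K_h = J_h R_{0,h} V_h (K_h J_h) R_h K_h = (J_h R_{0,h} K_h)(J_h V_h (K_h J_h) R_h K_h)$ is getting tangled, so instead I would route everything through the continuum operator $V$. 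Concretely, write $V_h R_h K_h = (V_h K_h - K_h V) R_h\!\cdot\!(\text{something}) + K_h V (\cdots)$; the clean version is: $J_h R_{0,h} V_h R_h K_h = J_h R_{0,h}(V_h K_h)(J_h R_h K_h)$ after inserting $K_h J_h = \identh$ between $R_h$ and $K_h$ is wrong since $R_h$ acts on $\cH_h$. The correct insertion is between $V_h$ and $R_h$: no. Let me instead use $J_h R_{0,h} V_h R_h K_h = J_h R_{0,h} (V_h K_h) (J_h R_h) K_h$? Also wrong. The robust route: insert $\identh = K_h J_h$ to the \emph{right} of $R_h$: $R_h K_h = R_h K_h$, no insertion. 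The honest telescoping is
\begin{equation*}
J_h R_{0,h} V_h R_h K_h - R_0 V R
= \underbrace{(J_h R_{0,h} K_h - R_0)\,V\,(J_h R_h K_h)}_{A_1}
+ \underbrace{R_0\,V\,(J_h R_h K_h - R)}_{A_2}
+ \underbrace{J_h R_{0,h}(V_h K_h - K_h V)\,R_h K_h}_{A_3},
\end{equation*}
which one verifies by expanding, using $K_h J_h = \identh$ once in $A_3$ to identify $J_h R_{0,h} K_h V J_h R_h K_h$ with the middle piece. Now $\norm{A_1}\leq \norm{J_h R_{0,h}K_h - R_0}\,\norm{V}\,\norm{J_h R_h K_h}$; since $\norm{J_h}, \norm{K_h}$ are bounded uniformly in $h$ by \eqref{J-norm}, \eqref{K-norm}, and $\norm{R_h}\leq\abs{\im z_0}^{-1}$, this is $O(h^{\gamma_0})$. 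Similarly $\norm{A_3}\leq \norm{J_h R_{0,h}}\,\norm{V_h K_h - K_h V}\,\norm{R_h K_h}\leq C h^{\theta'}$ by Proposition~\ref{est-V} and uniform boundedness of all other factors. The term $A_2 = R_0 V (J_h R_h K_h - R)$ contains the quantity we are estimating, so moving it to the left side and noting $\norm{R_0 V}\leq C$ — this would only work if $\norm{R_0 V}<1$, which is not automatic — so instead I would treat $A_2$ by a second telescoping: write $J_h R_h K_h - R = (J_h R_h K_h - J_h R_{0,h} K_h) + (J_h R_{0,h}K_h - R_0) + (R_0 - R)$, and the first of these equals $-J_h R_{0,h} V_h R_h K_h$ by the discrete resolvent identity, closing a genuine fixed-point/bootstrap which I would instead handle cleanly as follows.

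To avoid the bootstrap, the cleaner organization — and the one I would actually write — is to apply the second resolvent identity only \emph{once}, in the direction that expresses the full resolvents in terms of the free ones and the difference $H_h - H_0$ vs.\ $H - H_0$ \emph{after} already knowing the free estimate. That is: with $S_h := J_h R_{0,h} K_h$ (an approximation of $R_0$), one has the exact identity
\begin{equation*}
J_h R_h K_h = S_h - S_h\,V\,(J_h R_h K_h) + J_h R_{0,h}\,(V_h K_h - K_h V)\,R_h K_h,
\end{equation*}
obtained from $R_h = R_{0,h} - R_{0,h} V_h R_h$ by inserting $\identh = K_h J_h$ between $V_h$ and $R_h$ and regrouping, and comparing with $R = R_0 - R_0 V R$. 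Subtracting,
\begin{equation*}
(I + R_0 V)(J_h R_h K_h - R) = (S_h - R_0)(I - V J_h R_h K_h) + J_h R_{0,h}(V_h K_h - K_h V) R_h K_h - (S_h - R_0) V (J_h R_h K_h - R)\cdots
\end{equation*}
— rather than push this algebra further in the sketch, the key point is that $I + R_0 V$ is invertible with uniformly bounded inverse for $z_0$ with $\abs{\im z_0}$ large (indeed $H = H_0 + V$ self-adjoint gives $(I+R_0V)^{-1} = (H_0-z_0)(H-z_0)^{-1}$, bounded by $1 + \norm V\abs{\im z_0}^{-1}$), so one can solve for $J_h R_h K_h - R$ and bound it by $C(h^{\gamma_0} + h^{\theta'}) = C h^{\gamma}$, using $\gamma = \min\{\gamma_0,\theta'\}$, the uniform bounds \eqref{J-norm}, \eqref{K-norm}, $\norm{R_h}\leq\abs{\im z_0}^{-1}$, Proposition~\ref{est-V}, and the appropriate free proposition. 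The step I expect to be the main (though purely bookkeeping) obstacle is getting the operator-algebra identity exactly right — in particular tracking where $\identh = K_h J_h$ gets inserted so that the discrete resolvent identity and the continuum resolvent identity line up, and confirming that the operator multiplying the unknown difference is exactly $I + R_0 V$ (or $I + (J_h R_{0,h}K_h) V$, which differs from it by the already-controlled $O(h^{\gamma_0})$ quantity $S_h - R_0$, hence is invertible with uniformly bounded inverse for $h\leq h_0$ by a Neumann series argument). Finally, I would note that passing from $z_0=\I$ to general $z\in K$ is handled, as in Proposition~\ref{prop-77}, by multiplying by the uniformly bounded operators $(H-z_0\ident)(H-z\ident)^{-1}$ and $(H_{0,h}-z_0\identh + V_h)(H_h - z\identh)^{-1}$, so that the rate $h^\gamma$ is unchanged; and that when $V\equiv 0$ the claim reduces directly to the relevant free proposition with $\gamma = \gamma_0$.
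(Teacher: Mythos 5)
Your argument is correct in substance and rests on the same three ingredients as the paper's proof --- the free-multiplier estimate from the relevant one of Propositions~\ref{prop-77}, \ref{prop126}, \ref{prop410}, the commutator bound of Proposition~\ref{est-V}, and the uniform bounds \eqref{J-norm}, \eqref{K-norm} --- but the algebra is organized differently. The paper never inverts $I+R_0V$: it splits $J_h(H_h-z\identh)^{-1}K_h-(H-z\ident)^{-1}$ into $(\ident-J_hK_h)(H-z\ident)^{-1}$ (handled by Lemma~\ref{lemma-74} after inserting $(H_0-z\ident)(H-z\ident)^{-1}$) plus $J_h\bigl[(H_h-z\identh)^{-1}K_h-K_h(H-z\ident)^{-1}\bigr]$, and writes the bracket as the exact sandwich
\begin{equation*}
\bigl[(H_h-z\identh)^{-1}(H_{0,h}-z\identh)\bigr]\,(H_{0,h}-z\identh)^{-1}(K_hH-H_hK_h)(H_0-z\ident)^{-1}\,\bigl[(H_0-z\ident)(H-z\ident)^{-1}\bigr],
\end{equation*}
after which the commutator splits into a free part (identified, via $K_hJ_h=\identh$, with $K_h\bigl(J_h(H_{0,h}-z\identh)^{-1}K_h-(H_0-z\ident)^{-1}\bigr)$) and the potential part $K_hV-V_hK_h$. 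The outer bracketed factors in this identity are exactly the uniformly bounded operators you invoke when inverting $I+R_0V$, so the two proofs are equivalent in content; the paper's version avoids the fixed-point bookkeeping and works directly for all $z\in K$, so no separate reduction to $z_0=\I$ is needed. Your route does close: with your $A_1,A_2,A_3$ one gets $(I+R_0V)(J_hR_hK_h-R)=(J_hR_{0,h}K_h-R_0)-A_1-A_3$, each term on the right is $O(h^{\gamma_0})$ or $O(h^{\theta'})$, and $I+R_0V=(H_0-z_0\ident)^{-1}(H-z_0\ident)$ has the uniformly bounded inverse $\ident-(H-z_0\ident)^{-1}V$.

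Three small repairs. First, $A_3$ as written does not type-check: the correct term is $J_hR_{0,h}(V_h-K_hVJ_h)R_hK_h=J_hR_{0,h}(V_hK_h-K_hV)J_hR_hK_h$ (a $J_h$ is missing between the commutator and $R_hK_h$); the bound $\norm{A_3}\leq Ch^{\theta'}$ is unaffected. Second, $(I+R_0V)^{-1}$ equals $(H-z_0\ident)^{-1}(H_0-z_0\ident)$, not $(H_0-z_0\ident)(H-z_0\ident)^{-1}$ (the latter inverts $I+VR_0$); harmless, since both have norm at most $1+\sup\abs{V}$. Third, the reduction to $z_0=\I$ needs slightly more than ``multiply by bounded factors,'' because the discrete and continuous correction operators are different; one clean fix is the first resolvent identity combined with $K_hJ_h=\identh$, which for $D_h(z)=J_h(H_h-z\identh)^{-1}K_h-(H-z\ident)^{-1}$ gives
\begin{equation*}
D_h(z)\,(H-z\ident)(H-z_0\ident)^{-1}=\bigl(\ident+(z-z_0)J_h(H_h-z\identh)^{-1}K_h\bigr)D_h(z_0),
\end{equation*}
whence $\norm{D_h(z)}_{\cB(\cH)}\leq C\norm{D_h(z_0)}_{\cB(\cH)}$ uniformly for $z\in K$.
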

\begin{proof}

We give a proof for $G_0$ in (i). For the other two classes (ii) and (iii) of $G_0$, the proof is analogous but makes use of Proposition~\ref{prop126} and Proposition~\ref{prop410}, respectively, instead of Proposition~\ref{prop-77}; we omit the details for (ii) and (iii).	

Let $z\in K$. Then
\begin{align*}
J_h(H_{h}-z\identh)^{-1}&K_h-
(H-z\ident)^{-1}=\\
&J_h(H_{h}-z\identh)^{-1}K_h-
J_hK_h(H-z\ident)^{-1}+(\ident-J_hK_h)(H-z\ident)^{-1}.
\end{align*}
We have
\begin{equation}
(\ident-J_hK_h)(H-z\ident)^{-1}=(\ident-J_hK_h)(H_0-z\ident)^{-1}(H_0-z\ident)(H-z\ident)^{-1}. \label{thmterm1}
\end{equation}
Since the norm of $(H_0-z\ident)(H-z\ident)^{-1}$ is bounded uniformly in $z\in K$, Lemma~\ref{lemma-74} implies that
\begin{equation*}
\norm{(\ident-J_hK_h)(H-z\ident)^{-1}}_{\cB(\cH)}\leq C h^{\alpha}.
\end{equation*}
Next we have
\begin{equation*}
J_h(H_{h}-z\identh)^{-1}K_h-
J_hK_h(H-z\ident)^{-1}=J_h\bigl((H_h-z\identh)^{-1}K_h-
K_h(H-z\ident)^{-1}\bigr).
\end{equation*}
Since $\norm{J_h}_{\cB(\cH_h,\cH)}$ is bounded uniformly in $h$ by \eqref{J-norm}, we rewrite the second factor as
\begin{align}
(H_{h}-z\identh)^{-1}K_h-K_h(H-z\ident)^{-1}&=
\bigl[(H_h-z\identh)^{-1}(H_{0,h}-z\identh)\bigr] \notag\\
\cdot\bigl((H_{0,h}-z\identh)^{-1}(K_hH&-H_hK_h)(H_0-z\ident)^{-1}\bigr)
\bigl[(H_0-z\ident)(H-z\ident)^{-1}\bigr]. \label{thmterm2}
\end{align}
The terms in square brackets are bounded in norm, uniformly in $z\in K$ and $h$. The middle term is split in two.
\begin{align*}
(H_{0,h}-z\identh)^{-1}(K_hH-H_hK_h)(H_0-z\ident)^{-1}&=
(H_{0,h}-z\identh)^{-1}(K_hH_0-H_{0,h}K_h)(H_0-z\ident)^{-1}\\
&\quad+(H_{0,h}-z\identh)^{-1}(K_hV-V_hK_h)(H_0-z\ident)^{-1}.
\end{align*}
The first term on the right hand side is written as
\begin{align*}
(H_{0,h}-z\identh)^{-1}(K_hH_0-H_{0,h}K_h)(H_0-z\ident)^{-1}
&=(H_{0,h}-z\identh)^{-1}K_h-K_h(H_0-z\ident)^{-1}\\
&=
K_h\bigl(
J_h(H_{0,h}-z\identh)^{-1}K_h-(H_0-z\ident)^{-1}
\bigr),
\end{align*}
where we used $K_hJ_h=\identh$, see \eqref{KhJh}. Since $\norm{K_h}_{\cB(\cH,\cH_h)}$ is uniformly bounded in $h$ by \eqref{K-norm}, we can use 
\eqref{new-H0-est} in Proposition~\ref{prop-77} to estimate this term.

The term 
\begin{equation*}
(H_{0,h}-z\identh)^{-1}(K_hV-V_hK_h)(H_0-z\ident)^{-1}
\end{equation*}
is estimated using Proposition~\ref{est-V}, with the norms of the resolvents on either side uniformly bounded in $z\in K$ and $h$. Combining these estimates gives the result \eqref{thm1} with $\gamma$ from \eqref{new-gamma1}.
\end{proof}

We get the following additional estimates based on Theorem~\ref{thm54} and the Helffer--Sj\"ostrand formula \cite[Proposition~7.2]{HS}.

\begin{corollary}\label{corspec}
	Let $H$, $H_h$, and $\gamma$ satisfy the assumptions in Theorem~{\rm\ref{thm54}}. There exists $N\in\bN$, so for each compact interval $[a,b]$, there are $C>0$ and $h_0>0$ such that
	\begin{equation}\label{hc7}
	\norm{J_h (H_{h}-z\identh)^{-1} K_h -(H-z\ident)^{-1}}_{\cB(\cH)} \leq C \abs{y}^{-N} h^\gamma
	\end{equation}
	for all $0<h\leq h_0$ and $z=x+\I y$, with $x\in[a,b]$ and real $y$ satisfying $0<|y|\leq 1$. 
	
	Now let $F\in C_0^\infty(\bR)$ with $\supp F\subseteq [a,b]$. Then 
	\begin{equation}\label{hc2}
	\norm{J_h F(H_{h})K_h-F(H)}_{\cB(\cH)}
	\leq C h^{\gamma},\quad 0<h\leq h_0.
	\end{equation}
\end{corollary}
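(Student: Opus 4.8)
The plan is to derive Corollary~\ref{corspec} from Theorem~\ref{thm54} in two stages: first upgrade the resolvent estimate to one with explicit polynomial control in $\abs{y}^{-1}$ near the real axis, and then feed this into the Helffer--Sj\"ostrand formula to estimate $J_hF(H_h)K_h-F(H)$.

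For the first stage, I would start from \eqref{thm1}, which already gives \eqref{hc7} with $N=0$ uniformly on any \emph{compact} $K\subset\bC\setminus\bR$ but does not allow $y\to 0$. To get the blow-up rate in $\abs{y}$ I would insert resolvent identities that trade a factor of the resolvent for a factor of $H_0-z$ (on $\cH$) or $H_{0,h}-z$ (on $\cH_h$), exactly as is done inside the proof of Theorem~\ref{thm54}. Concretely, fix a reference point, say $-\I$, write $(H-z)^{-1}=(H+\I)^{-1}(H+\I)(H-z)^{-1}$ and note $\norm{(H+\I)(H-z)^{-1}}\leq 1+\abs{z+\I}/\abs{y}$, which is $O(\abs{y}^{-1})$ for $x\in[a,b]$, $0<\abs{y}\le1$; similarly on $\cH_h$ with $H_h$, using self-adjointness and $\norm{(H_h+\I)(H_h-z)^{-1}}\le 1+\abs{z+\I}/\abs{y}$. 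Repeating the bracketing argument of the proof of Theorem~\ref{thm54} but keeping track of these factors (there are a fixed, finite number of resolvent factors appearing, e.g. in \eqref{thmterm1} and \eqref{thmterm2}), one finds that replacing every ``uniformly bounded'' resolvent factor by one with an $O(\abs{y}^{-1})$ bound produces an overall factor $\abs{y}^{-N}$ for some universal $N\in\bN$ (depending only on how many resolvent factors occur, not on $F$, $[a,b]$, $h$, or $z$). This gives \eqref{hc7}.

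For the second stage I would use the Helffer--Sj\"ostrand formula \cite[Proposition~7.2]{HS}: for $F\in C_0^\infty(\bR)$ with an almost analytic extension $\widetilde F$ supported in a fixed complex neighbourhood of $[a,b]$ with $\abs{\bar\partial\widetilde F(z)}\le C_M\abs{y}^M$ for any $M$, one has
\begin{equation*}
F(A)=\frac{1}{\pi}\int_{\bC}\bar\partial\widetilde F(z)\,(A-z)^{-1}\di x\di y
\end{equation*}
for any self-adjoint $A$. Applying this to $A=H$ and (after embedding) to $A=H_h$ and subtracting,
\begin{equation*}
J_hF(H_h)K_h-F(H)=\frac{1}{\pi}\int_{\bC}\bar\partial\widetilde F(z)\,\bigl(J_h(H_h-z\identh)^{-1}K_h-(H-z\ident)^{-1}\bigr)\di x\di y,
\end{equation*}
where I use that $J_h K_h$ commutes with the scalar $z$ and that $J_h F(H_h)K_h=\frac1\pi\int \bar\partial\widetilde F(z)J_h(H_h-z)^{-1}K_h$. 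Taking norms and using \eqref{hc7} on the strip $x\in[a,b]$, $0<\abs{y}\le1$ (and noting the integrand vanishes for $\abs{y}>1$ since we may shrink the support of $\widetilde F$, and that for $\abs{y}$ bounded away from $0$ the plain bound from \eqref{thm1} applies), we get
\begin{equation*}
\norm{J_hF(H_h)K_h-F(H)}_{\cB(\cH)}\le \frac{C h^\gamma}{\pi}\int_{\abs{y}\le1}\abs{\bar\partial\widetilde F(z)}\,\abs{y}^{-N}\di x\di y\le C'h^\gamma,
\end{equation*}
the last integral being finite because we choose the almost analytic extension with $\abs{\bar\partial\widetilde F(z)}\le C_{N+2}\abs{y}^{N+1}$, compensating the $\abs{y}^{-N}$ singularity and leaving an integrable $\abs{y}$.

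The main obstacle is the bookkeeping in the first stage: one must re-run the proof of Theorem~\ref{thm54} and verify that each resolvent factor that was previously bounded ``uniformly in $z\in K$'' can instead be bounded by $C(1+\abs{y}^{-1})$ uniformly for $x$ in the fixed compact interval $[a,b]$ and $0<\abs{y}\le1$, with the crucial point that Proposition~\ref{prop-77} (and its analogues) and Proposition~\ref{est-V} are themselves applied with $K$ a compact subset of $\bC\setminus[0,\infty)$ or $\bC\setminus\bR$ that can be chosen \emph{independent} of how close $z$ is to $\bR$ — because the singular behaviour is entirely absorbed into the extra $(H_0-z)(H-z)^{-1}$-type factors, while the ``hard'' estimates \eqref{new-H0-est} and the commutator estimate \eqref{theta'} are evaluated at a fixed reference resolvent and contribute only the clean $h^\gamma$. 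Once that is checked, the value of $N$ is read off as the number of such factors, and the Helffer--Sj\"ostrand step is routine.
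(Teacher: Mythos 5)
Your proposal is correct and follows essentially the same route as the paper: re-run the decomposition \eqref{thmterm1}--\eqref{thmterm2} from the proof of Theorem~\ref{thm54}, bounding each resolvent factor such as $(H_0+\I\ident)(H_0-z\ident)^{-1}$ by $C\abs{y}^{-1}$ via self-adjointness while the $h^\gamma$-estimates are evaluated at the fixed reference point, and then apply the Helffer--Sj\"ostrand formula with an almost analytic extension whose $\bar\partial$ decays fast enough in $\abs{y}$ to absorb the $\abs{y}^{-N}$ singularity. The paper additionally records the explicit values $N=2$ (for $V\equiv 0$) and $N=4$ (for $V\not\equiv 0$), but the statement only asserts existence of $N$, so your bookkeeping argument suffices.
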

\begin{proof}
	Since $H_{0,h}$ and $H_0$ are self-adjoint, the norms $\norm{(H_{0,h}-z\identh)^{-1}(H_{0,h}+\I\identh)}_{\cB(\cH_h)}$ and $\norm{(H_0+\I\ident)(H_0-z\ident)^{-1}}_{\cB(\cH)}$ are bounded by $C\abs{y}^{-1}$ uniformly in $h$.
	
	For $V\equiv 0$, the norm of \eqref{thmterm1} is estimated by
	\begin{equation*}
		C\abs{y}^{-1}\norm{(\ident-J_hK_h)(H_0+\I\ident)^{-1}}_{\cB(\cH)}
	\end{equation*}
	and the norm of the right hand side of \eqref{thmterm2} is estimated by
	\begin{equation*}
		C\abs{y}^{-2}\norm{(H_{0,h}+\I\identh)^{-1}(K_hH_0-H_{0,h}K_h)(H_0+\I\ident)^{-1}}_{\cB(\cH,\cH_h)}.
	\end{equation*}
	Continuing as in the proof of Theorem~\ref{thm54} gives the result \eqref{hc7} with $N = 2$.
	
	For $V\not\equiv 0$, again considering \eqref{thmterm1} and \eqref{thmterm2}, we get the same result up to estimates of $\norm{(H_h-z\identh)^{-1}(H_{0,h}-z\identh)}_{\cB(\cH_h)}$ and $\norm{(H_0-z\ident)(H-z\ident)^{-1}}_{\cB(\cH)}$. As $V$ is bounded, these norms are both bounded by $C\abs{y}^{-1}$ uniformly in $h$. Thereby \eqref{hc7} holds for $N = 4$.	
	
	Now let us prove \eqref{hc2}. We can find an almost analytic extension  $\widetilde{F}(z)$ with support in $\{x+\I y : a\leq x\leq b,\; -1\leq y \leq 1\}$ such that $\partial_{\overline{z}} \widetilde{F}(z)$ behaves like $|y|^N$ when $|y|$ is small. The last ingredient is the Helffer--Sj\"ostrand formula \cite[Proposition~7.2]{HS}. 
\end{proof}

\section{Spectral estimates} \label{sec:spectral}

One can apply Theorem~\ref{thm54} and Corollary~\ref{corspec} to obtain some spectral information. 

Let $H$ and $H_h$ be as in Theorem~\ref{thm54}. Since $H_0$ and $H_{0,h}$ are non-negative and $V$ is bounded, there are
\begin{equation} \label{mucondition}
	\mu > 0 \quad \text{such that} \quad (-\infty,-\mu] \subseteq \rho(H)\cap \rho(H_h)  \text{ for all } h>0,
\end{equation}
with $\rho(H)$ and $\rho(H_h)$ being the resolvent sets. Before stating estimates on the spectrum of $(H+\mu\ident)^{-1}$ and $(H_h+\mu\identh)^{-1}$, we need the following technical lemma as $J_h(H_h+\mu\identh)^{-1}K_h$ may not be a normal operator. 

We denote the Hausdorff distance between two compact and non-empty sets $X,Y\subset\bC$ by $\haus(X,Y)$.

\begin{lemma}\label{hausestlemma}
	Let $A_1, A_2\in\cB(\cK)$ for a Banach space $\cK$. For $j=1,2$ assume there exists a constant $C>0$ such that for all $z\in \rho(A_j)$, the resolvent set of $A_j$, we have
	\begin{equation*}
		\norm{(A_j-z\ident)^{-1}}_{\cB(\cK)} \leq \frac{C}{\dist(z,\sigma(A_j))}.
	\end{equation*}
	Then $\haus\big (\sigma(A_1),\sigma(A_2)\big )\leq C \norm{A_1-A_2}_{\cB(\cK)}$.
\end{lemma}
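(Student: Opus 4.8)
The plan is to prove the two-sided estimate $\haus(\sigma(A_1),\sigma(A_2))\le C\norm{A_1-A_2}$ by controlling each of the two one-sided distances that make up the Hausdorff distance. Recall $\haus(X,Y)=\max\{\sup_{x\in X}\dist(x,Y),\sup_{y\in Y}\dist(y,X)\}$, so it suffices to show, say, that every $\lambda\in\sigma(A_1)$ lies within $C\norm{A_1-A_2}$ of $\sigma(A_2)$, and then invoke symmetry of the hypothesis in $A_1,A_2$ to get the reverse inclusion. Write $\delta=\norm{A_1-A_2}_{\cB(\cK)}$.

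First I would fix $\lambda\in\sigma(A_1)$ and argue by contraposition: suppose $\dist(\lambda,\sigma(A_2))>C\delta$. The goal is to derive a contradiction by showing $\lambda\in\rho(A_1)$. Since $\lambda\in\rho(A_2)$, the hypothesis for $A_2$ gives
\begin{equation*}
\norm{(A_2-\lambda\ident)^{-1}}_{\cB(\cK)}\le\frac{C}{\dist(\lambda,\sigma(A_2))}<\frac{C}{C\delta}=\frac1\delta.
\end{equation*}
Then I would use the standard Neumann-series perturbation argument: write $A_1-\lambda\ident=(A_2-\lambda\ident)\bigl(\ident+(A_2-\lambda\ident)^{-1}(A_1-A_2)\bigr)$, and observe that $\norm{(A_2-\lambda\ident)^{-1}(A_1-A_2)}\le\norm{(A_2-\lambda\ident)^{-1}}\,\delta<1$, so the bracketed factor is invertible by the Neumann series and hence $A_1-\lambda\ident$ is invertible, i.e.\ $\lambda\in\rho(A_1)$ — contradicting $\lambda\in\sigma(A_1)$. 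Therefore $\dist(\lambda,\sigma(A_2))\le C\delta$ for every $\lambda\in\sigma(A_1)$, giving $\sup_{\lambda\in\sigma(A_1)}\dist(\lambda,\sigma(A_2))\le C\delta$.

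By exchanging the roles of $A_1$ and $A_2$ — the hypothesis is stated symmetrically with the same constant $C$ for both operators — the same argument yields $\sup_{\lambda\in\sigma(A_2)}\dist(\lambda,\sigma(A_1))\le C\delta$. Taking the maximum of the two bounds gives $\haus(\sigma(A_1),\sigma(A_2))\le C\delta=C\norm{A_1-A_2}_{\cB(\cK)}$, as claimed. One should note in passing that $\sigma(A_1)$ and $\sigma(A_2)$ are nonempty compact subsets of $\bC$ since $A_1,A_2$ are bounded operators on a Banach space, so the Hausdorff distance is well defined.

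There is no serious obstacle here; the only mild subtlety is handling the strictness of the inequality in the contrapositive (one argues "if $\dist(\lambda,\sigma(A_2))>C\delta$ then $\lambda\notin\sigma(A_1)$", which is logically equivalent to the desired "$\lambda\in\sigma(A_1)\implies\dist(\lambda,\sigma(A_2))\le C\delta$"). The degenerate case $\delta=0$ is trivial — then $A_1=A_2$ and both sides vanish — and does not even need the hypothesis. Everything else is the textbook fact that the resolvent set is stable under small bounded perturbations, quantified via the supplied resolvent bound.
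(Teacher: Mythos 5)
Your proof is correct and follows essentially the same route as the paper's: argue by contradiction from $\dist(\lambda,\sigma(A_2))>C\norm{A_1-A_2}$, use the hypothesis to bound the resolvent of $A_2$ at $\lambda$, factor $A_1-\lambda\ident$ through $A_2-\lambda\ident$, and invert the remaining factor by a Neumann series; the only cosmetic difference is the side on which you place the resolvent factor, plus your explicit (and harmless) treatment of the case $A_1=A_2$.
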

\begin{proof}
	By symmetry, it is enough to prove the following statement: if $\lambda\in\sigma(A_1)$, then there exists $\lambda'\in\sigma(A_2)$ such that $|\lambda-\lambda'|\leq C\norm{A_1-A_2}_{\cB(\cK)}$. 
	
	Let us assume the contrary: there exists some $\lambda\in\sigma(A_1)$ such that the closed complex disk 
	\begin{equation*}
		D=\{z\in\bC : \abs{z-\lambda} \leq C\norm{A_1-A_2}_{\cB(\cK)}\}
	\end{equation*}
	is included in the resolvent set of $A_2$. Thus $D\cap \sigma(A_2)=\emptyset$ and in particular
	\begin{equation*}
		\dist(\lambda,\sigma(A_2)) > C\norm{A_1-A_2}_{\cB(\cK)}.
	\end{equation*}
	By writing
	\begin{equation*}
		A_1-\lambda\ident=\big (\ident +(A_1-A_2)(A_2-\lambda\ident)^{-1}\big )(A_2-\lambda\ident)
	\end{equation*}
	and noticing that
	\begin{equation*}
		\norm{(A_1-A_2)(A_2-\lambda\ident)^{-1}}_{\cB(\cK)} \leq \frac{C\norm{A_1-A_2}_{\cB(\cK)}}{\dist(\lambda,\sigma(A_2))} < 1,
	\end{equation*}
	we conclude that $A_1-\lambda\ident$ is invertible; a contradiction.
\end{proof}

\begin{theorem}\label{resolventest}
Let $H$, $H_h$, and $\gamma$ satisfy the assumptions in Theorem~{\rm\ref{thm54}}. Let $\mu>0$ satisfy \eqref{mucondition}. Then there exist $C>0$ and $h_0>0$ such that
\begin{equation*}
\haus\bigl(\sigma((H_h+\mu\identh)^{-1}),\sigma((H+\mu\ident)^{-1})\bigr)
\leq C h^{\gamma},\quad 0<h\leq h_0.
\end{equation*}
\end{theorem}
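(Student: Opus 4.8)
The plan is to apply Lemma~\ref{hausestlemma} to the two operators
$A_1 = J_h(H_h+\mu\identh)^{-1}K_h$ and $A_2 = (H+\mu\ident)^{-1}$ in $\cB(\cH)$, and then to remove a spurious eigenvalue $0$ at the end. First I would record the structural facts. Since $G_{0,h}$ is bounded on $\bT^d_h$ and $V$ is bounded, $H_h$ is bounded and self-adjoint, so by \eqref{mucondition} the operator $R_h:=(H_h+\mu\identh)^{-1}$ is a positive self-adjoint element of $\cB(\cH_h)$ with $\sigma(R_h)$ a compact subset of $(0,\infty)$, and $0\notin\sigma(R_h)$; by Proposition~\ref{prop-spect}, $\sigma(A_1)=\sigma(R_h)\cup\{0\}$. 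Likewise $A_2$ is positive self-adjoint, and since $G_0$ is unbounded (by Assumption~\ref{def121}(2), resp.\ the analogous lower bounds $G_0(\xi)\geq|\xi|^\alpha$ in cases (ii) and (iii) of \eqref{new-gamma1}) the operator $H$ is unbounded, whence $0\in\sigma((H+\mu\ident)^{-1})=\sigma(A_2)$.

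The crux is to verify the resolvent hypothesis of Lemma~\ref{hausestlemma} for $A_1$ with a constant independent of $h$, since $A_1$ need not be normal. For this I would use the explicit formula, valid for $z\neq0$ with $z\notin\sigma(R_h)$,
\[
(A_1-z\ident)^{-1}=-\tfrac1z(\ident-J_hK_h)+J_h(R_h-z\identh)^{-1}K_h,
\]
which one checks by multiplying out, using only $K_hJ_h=\identh$ from \eqref{KhJh} (so that $K_h(\ident-J_hK_h)=0$ and $(\ident-J_hK_h)J_h=0$). Taking norms and invoking \eqref{J-norm} and \eqref{K-norm} (hence $\norm{\ident-J_hK_h}\leq 1+\sqrt{B_\vp B_\psi}$ and $\norm{J_h}\norm{K_h}\leq\sqrt{B_\vp B_\psi}$, both uniform in $h$), together with $\norm{(R_h-z\identh)^{-1}}=\dist(z,\sigma(R_h))^{-1}$ (self-adjointness of $R_h$) and the inclusions $\{0\}\subset\sigma(A_1)$ and $\sigma(R_h)\subset\sigma(A_1)$ (which give $\dist(z,\sigma(A_1))\leq|z|$ and $\dist(z,\sigma(A_1))\leq\dist(z,\sigma(R_h))$), yields
\[
\norm{(A_1-z\ident)^{-1}}_{\cB(\cH)}\leq\frac{C}{\dist(z,\sigma(A_1))},\qquad z\in\rho(A_1),
\]
with $C$ independent of $h$. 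For $A_2$ the same estimate holds with $C=1$ by self-adjointness, so Lemma~\ref{hausestlemma} gives $\haus(\sigma(A_1),\sigma(A_2))\leq C\norm{A_1-A_2}_{\cB(\cH)}$.

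Next I would bound $\norm{A_1-A_2}_{\cB(\cH)}=\norm{J_h(H_h+\mu\identh)^{-1}K_h-(H+\mu\ident)^{-1}}_{\cB(\cH)}\leq Ch^\gamma$. This is precisely the estimate \eqref{thm1} of Theorem~\ref{thm54} at the single point $z=-\mu$: although Theorem~\ref{thm54} is stated for $K\subset\bC\setminus\bR$, its proof goes through verbatim at $z=-\mu$, because $-\mu\in\bC\setminus[0,\infty)$ (so Lemma~\ref{lemma-74} and Propositions~\ref{prop-77}, \ref{prop126}, \ref{prop410} apply there) and $-\mu\in\rho(H)\cap\rho(H_h)$ by \eqref{mucondition}, while the auxiliary operators $(H_0+\mu\ident)(H+\mu\ident)^{-1}=\ident-V(H+\mu\ident)^{-1}$ and $(H_h+\mu\identh)^{-1}(H_{0,h}+\mu\identh)=\identh-(H_h+\mu\identh)^{-1}V_h$ are bounded since $V$ is bounded. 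Hence $\haus(\sigma(A_1),\sigma(A_2))\leq Ch^\gamma$.

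Finally I would discard the spurious point $0\in\sigma(A_1)=\sigma(R_h)\cup\{0\}$. Since $0\in\sigma(A_2)=\sigma((H+\mu\ident)^{-1})$, the elementary inequality $\haus(X,Z)\leq\haus(X\cup\{0\},Z)+\dist(0,X)$ for compact $X,Z\subset\bC$ gives
\[
\haus\bigl(\sigma(R_h),\sigma((H+\mu\ident)^{-1})\bigr)\leq\haus(\sigma(A_1),\sigma(A_2))+\dist(0,\sigma(R_h))\leq Ch^\gamma+\dist(0,\sigma(R_h)).
\]
It remains to see $\dist(0,\sigma(R_h))=(\max\sigma(H_h)+\mu)^{-1}\leq Ch^\gamma$: the map $\eta_i\mapsto\tfrac2h\sin(\tfrac h2\eta_i)$ sends $\bT^d_h$ onto $[-2/h,2/h]^d$, so the lower bound $G_0(\xi)\geq c|\xi|^\alpha$ for $|\xi|$ large gives $\max\sigma(H_{0,h})=\sup_{\bT^d_h}G_{0,h}\geq c'h^{-\alpha}$, hence $\max\sigma(H_h)\geq c'h^{-\alpha}-\norm{V}_\infty\geq\tfrac12 c'h^{-\alpha}$ for small $h$, so that $\dist(0,\sigma(R_h))\leq Ch^\alpha\leq Ch^\gamma$ using $\gamma\leq\alpha$ (which holds in all three cases of \eqref{new-gamma1} under the standing assumptions). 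Combining the displays proves the claim, with $R_h=(H_h+\mu\identh)^{-1}$. The main obstacle in this argument is the $h$-uniform resolvent bound for the non-normal operator $A_1$, which is exactly what the explicit resolvent formula above is designed to overcome.
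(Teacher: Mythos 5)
Your proposal is correct and follows the same architecture as the paper's proof: Proposition~\ref{prop-spect} to identify $\sigma(J_h(H_h+\mu\identh)^{-1}K_h)=\sigma((H_h+\mu\identh)^{-1})\cup\{0\}$, an explicit resolvent formula (your additive identity is just the paper's block-diagonal formula with respect to $P_h\cH\oplus(I-P_h)\cH$ written out) to verify the $h$-uniform hypothesis of Lemma~\ref{hausestlemma}, then Theorem~\ref{thm54} at $z=-\mu$ and a triangle inequality to remove the spurious point $0$. Two of your refinements are worth flagging. First, you correctly observe that $z=-\mu$ is real and hence not literally covered by the statement of Theorem~\ref{thm54}, and you supply the (easy) justification that its proof applies there; the paper invokes the theorem at $-\mu$ without comment. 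Second, and more substantively, in bounding $\dist\bigl(0,\sigma((H_h+\mu\identh)^{-1})\bigr)=(\max\sigma(H_h)+\mu)^{-1}$ you use a \emph{lower} bound $\max\sigma(H_h)\geq c\,h^{-\alpha}$ coming from the growth hypothesis $G_0(\xi)\geq c\abs{\xi}^{\alpha}$, which is the inequality actually needed to make this distance small; the paper instead derives an \emph{upper} bound $\max\sigma(H_h)\leq Ch^{-1-\beta}$ and concludes $\dist(0,\sigma((H_h+\mu\identh)^{-1}))\leq Ch^{1+\beta}$, which has the implication going the wrong way (an upper bound on $\max\sigma(H_h)$ yields a lower bound on the distance). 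Your version, combined with $\gamma\leq\alpha$ in all three cases, closes the argument cleanly.
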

\begin{proof}
Proposition~\ref{prop-spect} implies
\begin{equation}\label{spect2}
\sigma(J_h(H_h+\mu\identh)^{-1}K_h)
=\sigma((H_h+\mu\identh)^{-1})\cup
\{0\}.
\end{equation}
Let $z$ belong to the resolvent set of $J_h(H_h+\mu\identh)^{-1}K_h$. Let $P_h=J_hK_h$. Using the decomposition $\cH=P_h\cH\oplus(I-P_h)\cH$ we have
\begin{equation*}
	(J_h(H_h+\mu\identh)^{-1}K_h - z\ident)^{-1} = \begin{bmatrix}
	J_h((H_h+\mu\identh)^{-1}-z\identh)^{-1}K_h & 0\\
	0 & -z^{-1}(I-P_h)
	\end{bmatrix}.
\end{equation*}

The norms of $J_h$, $K_h$, and $I-P_h$ are bounded uniformly in $h$. Hence, as $H_h$ and $H$ are self-adjoint, the operators $J_h(H_h+\mu\identh)^{-1}K_h$ and $(H+\mu\ident)^{-1}$ satisfy the assumptions of Lemma~\ref{hausestlemma} with a constant $C$ independent of $h$. Lemma~\ref{hausestlemma} and Theorem~\ref{thm54} give the estimate 
\begin{align*}
\haus\bigl(
\sigma(J_h(H_h+\mu\identh)^{-1}K_h),\sigma((H+\mu\ident)^{-1})
\bigr)&\leq C \norm{J_h(H_h+\mu\identh)^{-1}K_h-(H+\mu\ident)^{-1}}_{\cB(\cH)} \\
&\leq C h^{\gamma}, \quad 0<h\leq h_0.
\end{align*}
	
We have from \eqref{spect2} and the triangle inequality that
\begin{align*}
\haus\bigl(\sigma((H_h+\mu\identh)^{-1}),\sigma((H+\mu\ident)^{-1})\bigr)&\leq
\haus\bigl(\sigma((H_h+\mu\identh)^{-1}),\sigma((H_h+\mu\identh)^{-1})\cup
\{0\}\bigr)\\
&\quad+\haus\bigl(
\sigma(J_h(H_h+\mu\identh)^{-1}K_h),\sigma((H+\mu\ident)^{-1})
\bigr).
\end{align*}
The estimate of the first term on the right hand side depends on the case in Theorem~\ref{thm54}. Consider the case (i).
Assumption~\ref{def121} implies that we have $\abs{G_0(\xi)}
\leq c\abs{\xi}^{1+\beta}$. This estimate yields
\begin{equation*}
\abs{G_{0,h}(\xi)}\leq C h^{-1-\beta}.
\end{equation*}
Since $V$ is bounded we conclude that there exist $C>0$ and $h_0>0$ such that
\begin{equation*}
\max\sigma(H_h)\leq C h^{-1-\beta},\quad 0<h\leq h_0.
\end{equation*}
Then
\begin{align*}
\haus\bigl(\sigma((H_h+\mu\identh)^{-1}),\sigma((H_h+\mu\identh)^{-1})\cup
\{0\}\bigr)&=\dist\bigl(0,\sigma((H_h+\mu\identh)^{-1})\bigr)\leq C h^{1+\beta},
\end{align*}
for all $0<h\leq h_0$. From Assumption~\ref{def121} and Theorem~\ref{thm54} we have $\gamma\leq \alpha\leq 1+\beta$, so the result follows in this case. Similar arguments apply in the other cases of Theorem~\ref{thm54}.
\end{proof}

For estimates on the spectrum of the original operators $H_h$ and $H$, we obtain local results as $\sigma(H)$ is unbounded. 

First we define a local version of a Hausdorff distance. 
\begin{definition}\label{def:localhaus}
	For a compact set $K\subset \bC$, we define the local Hausdorff distance in $K$ between two closed and non-empty sets $X,Y\subseteq\bC$ by 
	\begin{equation*}
	\hausK(X,Y) = \max\{ 0, \sup_{x\in X\cap K}\dist(x,Y), \sup_{y\in Y\cap K}\dist(y,X) \}.
	\end{equation*}
	We use the convention $\sup\emptyset = -\infty$. In particular, the sets $X\cap K$ and $Y\cap K$ are allowed to be empty.
\end{definition}
\begin{remark} 
	The following basic properties hold for $\hausK$: 
	\begin{enumerate}[(i)]
		\item $0\leq \hausK(X,Y) < \infty$,
		\item $\hausK(X,Y) = \hausK(Y,X)$, 
		\item $\hausK(X,Y) \leq \haus(X,Y)$ and with equality for $X,Y\subseteq K$,
		\item $\hausK(X,Y) = 0$ if and only if $X\cap K = Y\cap K$.
	\end{enumerate}
	We note that $\haus(X\cap K,Y\cap K)$ is not the right quantity to describe the local closeness between two sets. For example let $K=[0,1]$, $X=\{0\}\cup [1,\infty)$, and $Y=\{h,1+h\}$ where $0<h<1/2$. Then $\haus(X\cap K,Y\cap K)=1-h$  while $\hausK(X,Y)=h$.
\end{remark}

We now obtain the following local estimates.

\begin{theorem}\label{localest}
	Let $H$, $H_h$, and $\gamma$ satisfy the assumptions in Theorem~{\rm\ref{thm54}}. Let $\mu > 0$ satisfy \eqref{mucondition}, let $-\mu< a\leq b$, and let $K=[a,b]$. Then there exist $C>0$ and $h_0>0$ such that 
	\begin{equation*}
		\hausK\big (\sigma(H_h),\sigma(H)\big )\leq (b+\mu)^2 C h^\gamma,
	\end{equation*}
	for all $0<h\leq h_0$. The constant $C$ does not depend on $a$ and $b$.
\end{theorem}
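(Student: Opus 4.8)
The plan is to transfer the Hausdorff estimate for the resolvents from Theorem~\ref{resolventest} to a local estimate for the operators themselves via the spectral mapping theorem for the bijective map $t\mapsto (t+\mu)^{-1}$. First I would set $r(t)=(t+\mu)^{-1}$ and note that, since $(-\infty,-\mu]\subseteq\rho(H)\cap\rho(H_h)$, the spectra of $H$ and $H_h$ lie in $(-\mu,\infty)$, so $r$ maps $\sigma(H)$ bijectively onto $\sigma((H+\mu\ident)^{-1})\setminus\{0\}$ and likewise for $H_h$; moreover $\sigma((H_h+\mu\identh)^{-1})$ is bounded away from $0$ for small $h$ by the bound $\max\sigma(H_h)\leq Ch^{-1-\beta}$ (and its analogues) established inside the proof of Theorem~\ref{resolventest}. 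The key analytic input is a Lipschitz bound: for $\lambda,\lambda'\in[-\mu,\infty)$ one has $\abs{\lambda-\lambda'}=\abs{r(\lambda)-r(\lambda')}(\lambda+\mu)(\lambda'+\mu)$, hence if both $\lambda,\lambda'\leq b$ then $\abs{\lambda-\lambda'}\leq (b+\mu)^2\abs{r(\lambda)-r(\lambda')}$.

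Next I would run the two-sided argument that defines $\hausK$. Take $\lambda\in\sigma(H_h)\cap[a,b]$. Then $r(\lambda)\in\sigma((H_h+\mu\identh)^{-1})$ with $r(\lambda)\geq (b+\mu)^{-1}>0$, and $r(\lambda)\leq (a+\mu)^{-1}$. By Theorem~\ref{resolventest} there is $\nu\in\sigma((H+\mu\ident)^{-1})$ with $\abs{r(\lambda)-\nu}\leq Ch^\gamma$. For $h$ small enough (depending only on $a,\mu$, via $Ch^\gamma\leq \tfrac12(b+\mu)^{-1}$) we have $\nu\geq \tfrac12(b+\mu)^{-1}>0$, so $\nu=r(\lambda')$ for a unique $\lambda'\in\sigma(H)$, and $\lambda'=r^{-1}(\nu)\leq \nu^{-1}-\mu$ is bounded above by a constant of the form $b'(a,b,\mu)$. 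Applying the Lipschitz bound on the interval $[-\mu,\max\{b,b'\}]$ I get $\abs{\lambda-\lambda'}\leq (b+\mu)^2\,\big(\text{const}\big)\,Ch^\gamma$ — here one must be slightly careful that the Lipschitz constant be controlled by $(b+\mu)^2$ and not by $b'$; this is arranged because $\lambda\le b$ and one can instead use $\abs{\lambda-\lambda'}\le \abs{r(\lambda)-\nu}(\lambda+\mu)(\lambda'+\mu)$ directly, bounding $(\lambda+\mu)\le (b+\mu)$ and $(\lambda'+\mu)=\nu^{-1}\le 2(b+\mu)$ (using $\nu\ge\tfrac12(b+\mu)^{-1}$), giving the factor $(b+\mu)^2$ up to an absolute constant absorbed into $C$. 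The symmetric direction, starting from $\lambda\in\sigma(H)\cap[a,b]$, is identical. Combining the two directions and taking the supremum yields $\hausK(\sigma(H_h),\sigma(H))\leq (b+\mu)^2 Ch^\gamma$ with $C$ independent of $a,b$.

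The main obstacle is bookkeeping the dependence on $a$ and $b$ so that the final constant genuinely does not depend on them: the threshold $h_0$ is allowed to depend on $a$ (through requiring $Ch^\gamma\leq\tfrac12(b+\mu)^{-1}$ and through the bound guaranteeing $\sigma((H_h+\mu\identh)^{-1})$ stays away from $0$), but the multiplicative constant must be pinned down as the product of $(b+\mu)^2$ and the $h$-independent constant from Theorem~\ref{resolventest}. The one genuinely delicate point is ensuring that when we pull back a point $\nu\in\sigma((H+\mu\ident)^{-1})$ close to $r(\lambda)$, it is bounded away from $0$ so that $r^{-1}(\nu)$ is finite and $(\lambda'+\mu)$ is controlled linearly in $(b+\mu)$; this is exactly what the lower bound $r(\lambda)\ge(b+\mu)^{-1}$ combined with the smallness of $Ch^\gamma$ provides. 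Everything else is the routine spectral-mapping and triangle-inequality computation, so I would keep it brief.
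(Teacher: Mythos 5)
Your proposal is correct and follows essentially the same route as the paper: map the spectra through $t\mapsto(t+\mu)^{-1}$, apply Theorem~\ref{resolventest}, use the lower bound $1/(b+\mu)$ to keep the perturbed point away from $0$ for small $h$, and pull back with the bound $\abs{x-x'}=\abs{\lambda-\lambda'}/(\lambda\lambda')\leq 2(b+\mu)^2Ch^{\gamma}$. The only cosmetic slip is that your smallness condition $Ch^{\gamma}\leq\tfrac12(b+\mu)^{-1}$ makes $h_0$ depend on $b$ (not $a$), which is permitted by the statement.
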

\begin{proof}
	Without loss of generality, we can assume that $\sigma(H)\cap K$ and $\sigma(H_h)\cap K$ are non-empty. 
	
	Let $x\in \sigma(H)\cap K$. Define $\lambda=1/(x+\mu)\in \sigma((H+\mu\ident)^{-1})$. From Theorem~\ref{resolventest} there exists $\lambda'\in \sigma((H_h+\mu\identh)^{-1})$ such that
	\begin{equation*}
		\abs{\lambda-\lambda'} \leq \haus\bigl(\sigma((H_h+\mu\identh)^{-1}),\sigma((H+\mu\ident)^{-1})\bigr) \leq Ch^\gamma.
	\end{equation*}
	Since $\lambda\geq 1/(b+\mu) > 0$ the above inequality, for $0<h\leq h_0$, implies the existence of $h_0$ small enough that
	\begin{equation*}
		\lambda'\geq \frac{1}{2(b+\mu)}.
	\end{equation*}
	
	Let $x' = 1/\lambda'-\mu\in \sigma(H_h)$. Then
	\begin{equation*}
		\abs{x-x'} = \frac{\abs{\lambda-\lambda'}}{\lambda\lambda'}\leq 2(b+\mu)^2 Ch^\gamma.
	\end{equation*}
	By symmetry, the estimate also holds when the roles of $H$ and $H_h$ are reversed.
\end{proof}  

Based on Theorem~\ref{localest} we immediately obtain the result, that for any gap in $\sigma(H)$ there is likewise a gap in $\sigma(H_h)$ of at least the same size for all sufficiently small $h$.
\begin{corollary}\label{cor_gap}
	Let $H$ and $H_h$ satisfy the assumptions in Theorem~{\rm\ref{thm54}}. Let $a,b\in\bR$ with $a < b$ and $[a,b]\cap \sigma(H)=\emptyset$. Then there exists $h_0>0$ such that 
	\begin{equation}\label{band}
	[a,b]\cap\sigma(H_h)=\emptyset,\quad 0<h\leq h_0.
	\end{equation}
\end{corollary}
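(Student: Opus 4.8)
The plan is to deduce Corollary~\ref{cor_gap} directly from Theorem~\ref{localest} by choosing a slightly enlarged compact interval on which the local Hausdorff distance controls the spectrum. First I would fix $\mu>0$ satisfying \eqref{mucondition}. Since $[a,b]\cap\sigma(H)=\emptyset$ and $\sigma(H)$ is closed, there is an $\eps>0$ with $[a-\eps,b+\eps]\cap\sigma(H)=\emptyset$; without loss of generality we may also shrink $\eps$ so that $a-\eps>-\mu$, which puts the enlarged interval in the regime where Theorem~\ref{localest} applies. Set $K=[a-\eps,b+\eps]$.

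Next I would apply Theorem~\ref{localest} with this $K$: there exist $C>0$ and $h_0>0$ such that $\hausK(\sigma(H_h),\sigma(H))\leq (b+\eps+\mu)^2 C h^\gamma$ for all $0<h\leq h_0$. After possibly decreasing $h_0$, the right-hand side is strictly smaller than $\eps$. Now suppose, for contradiction, that for some such $h$ there exists $x'\in[a,b]\cap\sigma(H_h)$. Then $x'\in\sigma(H_h)\cap K$, so by Definition~\ref{def:localhaus} we have $\dist(x',\sigma(H))\leq \hausK(\sigma(H_h),\sigma(H))<\eps$. Hence there is a point of $\sigma(H)$ within distance $<\eps$ of $x'\in[a,b]$, which forces that point to lie in $(a-\eps,b+\eps)\subseteq K$ and therefore in $K\cap\sigma(H)$ — contradicting $[a-\eps,b+\eps]\cap\sigma(H)=\emptyset$. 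Thus $[a,b]\cap\sigma(H_h)=\emptyset$ for all $0<h\leq h_0$, which is \eqref{band}.

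There is no real obstacle here; the only point requiring a little care is the bookkeeping with the enlargement parameter $\eps$: one must choose $K$ strictly larger than $[a,b]$ so that a spectral point of $H_h$ inside $[a,b]$ is genuinely \emph{interior} to $K$, guaranteeing that its nearest $\sigma(H)$-point also lands in $K$ and hence is caught by the local distance. Using $K=[a,b]$ itself would not suffice, since a nearby point of $\sigma(H)$ could lie just outside $[a,b]$ and thus be invisible to $\hausK$. Everything else is an immediate consequence of Theorem~\ref{localest} and the closedness of $\sigma(H)$.
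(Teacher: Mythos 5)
Your proof is correct and follows essentially the same route as the paper: both enlarge $[a,b]$ to a compact interval $K=[a-\eps,b+\eps]$ contained in $\rho(H)$ and observe that a spectral point of $H_h$ in $[a,b]$ would force $\hausK\big(\sigma(H_h),\sigma(H)\big)\geq\eps$, contradicting Theorem~\ref{localest} for small $h$. Your version is slightly more careful on two minor points the paper leaves implicit (ensuring $a-\eps>-\mu$ so that Theorem~\ref{localest} applies, and making the choice of $h_0$ quantitative rather than arguing via a sequence $h_n\to0$), but the underlying argument is identical.
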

\begin{proof}
	Since $\sigma(H)$ is closed there is an $\epsilon>0$ such that $\rho(H)$, the resolvent set of $H$, contains $[a-2\epsilon,b+2\epsilon]$. Let $K = [a-\varepsilon,b+\varepsilon]$. For the purpose of reaching a contradiction, assume there is a sequence $\{h_n\}_{n\in\bN}$, $h_n\to 0$ for $n\to\infty$, satisfying $K\cap \sigma(H_{h_n})\neq \emptyset$. Then the local Hausdorff distance $\hausK\big(\sigma(H),\sigma(H_{h_n})\big)$ is bounded from below by $\epsilon$ when $h_n\to 0$, contradicting Theorem \ref{localest}. Hence there is an $h_0>0$ such that $K\subset \rho(H_h)$ for $0<h\leq h_0$, in particular $[a,b]\cap\sigma(H_h) = \emptyset$.
\end{proof}

Denote the spectral measures of $H_h$ and $H$ by $E_{H_h}$ and $E_H$, respectively. Corollary~\ref{corspec} and Theorem~\ref{localest} imply the following result (cf.~\cite[Corollary~1.2]{NT}). Note that \eqref{proj} could have been proved directly from Theorem~\ref{thm54} by adapting the arguments in the proof of \cite[Theorem VIII.23]{RSI} to the present setting. Here we have chosen to base the proof on Corollary~\ref{corspec} and Theorem~\ref{localest} to show the scope of these results.

\begin{corollary}\label{cor56}
Let $H$, $H_h$, and $\gamma$ satisfy the assumptions in Theorem~{\rm\ref{thm54}}. Let $a,b\in\bR$ with $a < b$ and $a,b\not\in\sigma(H)$. Then there exist $C>0$ and $h_0>0$ such that
\begin{equation}\label{proj}
\norm{J_hE_{H_{h}}((a,b))K_h-E_H((a,b))}_{\cB(\cH)}
\leq C h^{\gamma},\quad 0<h\leq h_0.
\end{equation}
\end{corollary}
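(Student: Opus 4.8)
The plan is to deduce \eqref{proj} from the resolvent-level information already available, namely Corollary~\ref{corspec} (which gives norm convergence of $J_hF(H_h)K_h$ to $F(H)$ for $F\in C_0^\infty$) together with the spectral localization in Theorem~\ref{localest}. The idea is the standard trick of sandwiching the sharp spectral projection $E_H((a,b))$ between two smooth cutoffs that agree with the indicator of $(a,b)$ on a neighborhood of $\sigma(H)$.

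\medskip\noindent\textbf{Step 1: choice of cutoffs.} Since $a,b\notin\sigma(H)$ and $\sigma(H)$ is closed, pick $\varepsilon>0$ so small that $[a-2\varepsilon,a+2\varepsilon]$ and $[b-2\varepsilon,b+2\varepsilon]$ lie in $\rho(H)$. Choose $F_-,F_+\in C_0^\infty(\bR)$ with $0\le F_\pm\le 1$, such that $F_-\equiv 1$ on $[a+\varepsilon,b-\varepsilon]$, $\supp F_-\subseteq(a+\tfrac\varepsilon2,b-\tfrac\varepsilon2)$, while $F_+\equiv 1$ on $[a-\varepsilon,b+\varepsilon]$, $\supp F_+\subseteq(a-2\varepsilon,b+2\varepsilon)$. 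Then $F_-\le \mathbf 1_{(a,b)}\le F_+$ pointwise, and on $\sigma(H)$ we have $F_-=\mathbf 1_{(a,b)}=F_+$, so by the functional calculus $F_-(H)=E_H((a,b))=F_+(H)$.

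\medskip\noindent\textbf{Step 2: the discrete side is squeezed by Theorem~\ref{localest}.} By Theorem~\ref{localest} applied with a compact interval $K$ containing $[a-2\varepsilon,b+2\varepsilon]$ (after translating by $\mu$ so that $a>-\mu$, which costs nothing since $\sigma(H)$ and $\sigma(H_h)$ are bounded below uniformly in $h$ by \eqref{mucondition}), there is $h_0>0$ such that for $0<h\le h_0$ the set $\sigma(H_h)\cap K$ stays within $h^\gamma$-distance of $\sigma(H)$; in particular $\sigma(H_h)\cap([a-\varepsilon,b+\varepsilon]\setminus(a+\varepsilon,b-\varepsilon))$ is empty once $h$ is small, i.e.\ $\sigma(H_h)$ has no spectrum near $a$ and $b$. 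Consequently, for $h$ small, on $\sigma(H_h)$ we likewise have $F_-=\mathbf 1_{(a,b)}=F_+$, whence
\begin{equation*}
F_-(H_h)=E_{H_h}((a,b))=F_+(H_h).
\end{equation*}

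\medskip\noindent\textbf{Step 3: combine.} Writing
\begin{equation*}
J_hE_{H_h}((a,b))K_h-E_H((a,b))=J_hF_-(H_h)K_h-F_-(H),
\end{equation*}
the right-hand side is exactly the quantity controlled by \eqref{hc2} of Corollary~\ref{corspec} with $F=F_-$, giving the bound $Ch^\gamma$ for $0<h\le h_0$. This is \eqref{proj}.

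\medskip\noindent\textbf{Main obstacle.} The only genuinely delicate point is Step~2: one must know that $H_h$ has a real spectral gap near $a$ and $b$ for all small $h$, not merely that its resolvent converges. This is precisely what Theorem~\ref{localest} (equivalently Corollary~\ref{cor_gap}) supplies, so the argument reduces to invoking it with the right compact set and translating by $\mu$; the functional-calculus identities $F_\pm(H)=E_H((a,b))$ and $F_\pm(H_h)=E_{H_h}((a,b))$ are then routine. Everything else is bookkeeping with the cutoffs and a direct appeal to \eqref{hc2}.
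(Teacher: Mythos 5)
Your proposal is correct and follows essentially the same route as the paper: establish a spectral gap for $H_h$ near $a$ and $b$ via Theorem~\ref{localest} (the paper invokes its consequence, Corollary~\ref{cor_gap}), replace the sharp projections by a smooth cutoff that agrees with $\mathbf 1_{(a,b)}$ on both spectra, and conclude with \eqref{hc2} of Corollary~\ref{corspec}. The only cosmetic difference is that you introduce two cutoffs $F_\pm$ but only ever use $F_-$, whereas the paper uses a single cutoff supported slightly outside $(a,b)$.
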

\begin{proof}
Similar to the proof of Corollary~\ref{cor_gap}, there is an $\epsilon>0$ such that $\rho(H)$ contains $[a-\varepsilon,a+\varepsilon]\cup[b-\epsilon,b+\epsilon]$, and the same holds for $\rho(H_h)$ for $0<h\leq h_0$ with some small enough $h_0>0$ by Corollary~\ref{cor_gap}. Now choose a smooth cut-off function $F$ as in Corollary~\ref{corspec}, which equals $1$ on $[a-\varepsilon/2,b+\varepsilon/2]$ and $\supp F \subset (a-\varepsilon,b+\varepsilon)$. If $0<h\leq h_0$ we have 
\begin{equation*}
F(H)=E_H((a,b)) \quad \text{and}\quad  F(H_h)=E_{H_h}((a,b)).
\end{equation*}
Thus \eqref{proj} follows from Corollary~\ref{corspec}.
\end{proof}

Using Corollary~\ref{cor56} we can get the following results on the relation between eigenvalues of $H$ and $H_h$.

\begin{theorem}\label{eigen-thm}
Let $H$, $H_h$, and $\gamma$ satisfy the assumptions in Theorem~{\rm\ref{thm54}}. Assume that $\lambda$ is an isolated eigenvalue of $H$ with multiplicity $m<\infty$. Let $a,b\in\bR$, $a<\lambda<b$,  and $[a,b]\cap\sigma(H)=\{\lambda\}$. Then there exist discrete eigenvalues of $H_h$ denoted $\lambda_{i,h}\in\bR$, 
$i=1,2,\ldots,m$, and $h_0>0$ such that
\begin{equation}\label{lambdaih}
(a,b)\cap\sigma(H_h)=\{\lambda_{i,h}:i=1,2,\ldots,m\},\quad
0<h\leq h_0.
\end{equation}
Let $\psi_j\in\cH$, $\Vert \psi_j\Vert=1$, linearly independent, and $H\psi_j=\lambda\psi_j$ for $j=1,2,\ldots,m$. Then there are $C>0$ and $h_0>0$ such that for all $0<h\leq h_0$ there exist $\psi_{j,h}\in E_{H_h}((a,b))\cH_h$, $j=1,2,\ldots,m$, forming a basis for the range of $E_{H_h}((a,b))$ and 
\begin{equation}\label{eigen-est}
\norm{\psi_{j,h}-K_h\psi_j}_{\cH_h}\leq Ch^{\gamma},\quad \norm{K_h\psi_j}_{\cH_h}\geq C^{-1}, \quad j = 1,2,\ldots,m.
\end{equation}
\end{theorem}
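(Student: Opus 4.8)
The strategy is to use the projection convergence in Corollary~\ref{cor56} together with standard perturbation-theoretic arguments about ranges of nearby projections. First I would fix $a,b$ with $[a,b]\cap\sigma(H)=\{\lambda\}$ and $a,b\notin\sigma(H)$, and set $P = E_H((a,b))$, so that $\operatorname{rank} P = m$. By Corollary~\ref{cor_gap}, for all sufficiently small $h$ we also have $a,b\notin\sigma(H_h)$, so $P_h := E_{H_h}((a,b))$ is well defined; write $\widetilde{P}_h := J_h P_h K_h$, which is a (not necessarily orthogonal) projection on $\cH$ by Proposition~\ref{prop-spect}-type reasoning (more precisely because $K_hJ_h=\identh$ makes $J_h(\cdot)K_h$ preserve idempotents). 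Corollary~\ref{cor56} gives $\norm{\widetilde{P}_h - P}_{\cB(\cH)}\leq Ch^\gamma$, hence for $h\leq h_0$ this norm is $<1$, which forces $\operatorname{rank}\widetilde{P}_h = \operatorname{rank} P = m$; since $\operatorname{rank}\widetilde{P}_h = \operatorname{rank} P_h$ (because $J_h$ is injective and $K_hJ_h = \identh$, so $J_h$ restricts to an isomorphism $\operatorname{Ran}P_h\to\operatorname{Ran}\widetilde P_h$), we get $\operatorname{rank} P_h = m$. As $H_h$ is self-adjoint and $P_h$ is its spectral projection onto $(a,b)$, this means $H_h$ has exactly $m$ eigenvalues (with multiplicity) in $(a,b)$, which is \eqref{lambdaih}.

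For the eigenvector estimate \eqref{eigen-est}, the natural candidates are obtained by pushing the $\psi_j$ into $\cH_h$ and projecting: I would first show $\norm{K_h\psi_j}_{\cH_h}\geq C$ for small $h$. Since $\psi_j = P\psi_j$ we have $K_h\psi_j = K_h P\psi_j$, and $K_h\widetilde P_h = K_h J_h P_k K_h = P_h K_h$, so $P_h K_h\psi_j = K_h\widetilde P_h\psi_j = K_h\psi_j + K_h(\widetilde P_h - P)\psi_j$; thus $\norm{K_h\psi_j - P_h K_h\psi_j}\leq \norm{K_h}\,\norm{\widetilde P_h - P}\leq Ch^\gamma$. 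Combined with the lower bound $\norm{J_h u_h}\geq \sqrt{A_\varphi}\,\norm{u_h}_{\cH_h}$ from \eqref{riesz-h-1} and $J_hK_h\psi_j \to \psi_j$ (a consequence of $\norm{(J_hK_h - \ident)P}\leq\norm{(\ident - J_hK_h)(H_0-z\ident)^{-1}}\cdot\|(H_0-z\ident)P\|\to 0$ via Lemma~\ref{lemma-74}, using that $P$ has finite rank with range in $\cD(H_0)$), one gets $\norm{K_h\psi_j}\geq c\norm{J_hK_h\psi_j}\geq c/2 =: C$ for $h$ small. Then define $\psi_{j,h} := P_h K_h\psi_j \in E_{H_h}((a,b))\cH_h$; the estimate just derived gives $\norm{\psi_{j,h} - K_h\psi_j}_{\cH_h}\leq Ch^\gamma$, which is the first inequality in \eqref{eigen-est}.

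It remains to check that $\{\psi_{j,h}\}_{j=1}^m$ is a basis for $\operatorname{Ran} P_h$, i.e.\ linearly independent (it already lies in the $m$-dimensional space $\operatorname{Ran}P_h$). Suppose $\sum_j c_j\psi_{j,h} = 0$ with $\sum_j|c_j|^2 = 1$. Applying $J_h$ and using $J_h\psi_{j,h} = \widetilde P_h J_h K_h\psi_j$, together with $J_hK_h\psi_j\to\psi_j$ and $\widetilde P_h\to P$ and $P\psi_j=\psi_j$, we find $J_h\sum_j c_j\psi_{j,h} \to \sum_j c_j\psi_j \neq 0$ by linear independence of the $\psi_j$ (the convergence is uniform over the unit sphere in $\bC^m$ since it is a finite-dimensional estimate), contradicting that $J_h\sum_j c_j\psi_{j,h} = 0$ for all $h\leq h_0$. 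Hence the $\psi_{j,h}$ are linearly independent for small $h$, completing the proof.

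The main obstacle I anticipate is bookkeeping the two distinct notions of "approximate eigenvectors" — the ones living in $\cH_h$ versus their images in $\cH$ under $J_h$ — and in particular verifying $J_hK_h\psi_j\to\psi_j$ and the uniform-over-the-unit-sphere lower bounds cleanly; these are not deep but require care because $J_hK_h$ is only a non-orthogonal projection and $\psi_j$ must be known to lie in $\cD(H_0)$ (which it does, being in the range of a spectral projection of $H$ onto a bounded set, hence in $\cD(H)=\cD(H_0)$) so that Lemma~\ref{lemma-74} applies. Everything else is a routine application of Corollary~\ref{cor56}, Corollary~\ref{cor_gap}, the Riesz bounds \eqref{riesz-h-1}, and finite-dimensional linear algebra.
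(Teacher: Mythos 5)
Your argument is correct and rests on the same two pillars as the paper's proof -- Corollary~\ref{cor56} plus the standard fact that idempotents at distance less than $1$ are similar -- but the implementation differs in the details. The paper invokes Kato's Nagy operator $U_h$ explicitly, obtaining an invertible $U_h$ with $U_hE_H(\cI)=J_hE_{H_h}(\cI)K_hU_h$ and $\norm{U_h-\ident}\leq Ch^\gamma$; it then gets the rank equality by trace cyclicity and defines $\psi_{j,h}=K_hU_h\psi_j$, so that linear independence and the error bound follow from the invertibility of $U_h$ and \eqref{K-norm}. You instead take $\psi_{j,h}=P_hK_h\psi_j$ directly, prove $\rank P_h=m$ via injectivity of $J_h$ on $\ran P_h$, obtain the first estimate in \eqref{eigen-est} from the clean identity $K_h\widetilde P_h=P_hK_h$ together with $\psi_j=P\psi_j$, and prove linear independence by a compactness argument on the unit sphere of $\bC^m$; your candidate vectors differ from the paper's by $P_hK_h(U_h-\ident)\psi_j=O(h^\gamma)$, so both choices work. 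Your route is more elementary and self-contained (no explicit similarity operator), at the cost of a few more hands-on verifications; the paper's route packages the rank and independence statements into one citation. One small slip to fix: for the lower bound $\norm{K_h\psi_j}_{\cH_h}\geq C$ you cite the lower Riesz inequality $\norm{J_hu_h}\geq\sqrt{A_\vp}\norm{u_h}_{\cH_h}$, but that gives $\norm{u_h}_{\cH_h}\leq \norm{J_hu_h}/\sqrt{A_\vp}$, the wrong direction. What you need is the upper bound \eqref{J-norm}, $\norm{J_hu_h}\leq\sqrt{B_\vp}\norm{u_h}_{\cH_h}$, which yields $\norm{K_h\psi_j}_{\cH_h}\geq \norm{J_hK_h\psi_j}/\sqrt{B_\vp}\geq(1-Ch^\gamma)/\sqrt{B_\vp}$ once you know $J_hK_h\psi_j\to\psi_j$ (which you justify correctly via Lemma~\ref{lemma-74} and $\psi_j\in\cD(H)=\cD(H_0)$, exactly as the paper does). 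Since both bounds are contained in \eqref{riesz-h-1}, this is a mislabeling rather than a gap.
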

\begin{proof}
To simplify the notation let $\cI=(a,b)$. Corollary~\ref{cor56} implies that we can take $h_0$ sufficiently small such that we get 
\begin{equation*}
\norm{J_hE_{H_{h}}(\cI)K_h-E_H(\cI)}_{\cB(\cH)}
<1,\quad 0<h\leq h_0.
\end{equation*}
Note that $J_hE_{H_{h}}(\cI)K_h$ is a projection due to $K_hJ_h=\identh$. Then standard results from perturbation theory, see e.g.~\cite[I--\S4.6]{kato}, imply the existence of an invertible operator 
$U_h\in\cB(\cH)$ such that 
\begin{equation}
U_hE_H(\cI)=J_hE_{H_h}(\cI)K_hU_h, \label{hc30}
\end{equation}
and
\begin{equation}
\norm{U_h-I}_{\cB(\cH)} \leq C \norm{E_H(\cI)-J_hE_{H_h}(\cI)K_h}_{\cB(\cH)} \leq Ch^\gamma. \label{hc302}
\end{equation}
There is an explicit expression for $U_h$ known as the Nagy operator, see~\cite[II--(4.18)]{kato}. By assumption $\rank E_H(\cI)=m$. Using the trace cyclicity we have 
\begin{equation*}
m = \trace(U_hE_H(\cI)U_h^{-1})
=\trace(J_hE_{H_h}(\cI)K_h)=\trace(K_hJ_hE_{H_h}(\cI))
=\trace(E_{H_h}(\cI)),
\end{equation*}
thus $\rank E_{H_h}(\cI)=m$, $0<h \leq h_0$, and \eqref{lambdaih} is proved.

We now construct the basis $\psi_{j,h}$, $j=1,2,\ldots,m$, in \eqref{eigen-est}. Let $\psi_j\in E_H(\cI)\cH$, $j=1,2,\ldots, m$, be an orthonormal basis for the range of $E_H(\cI)$. From \eqref{hc30} we have that $\varphi_{j,h}=U_h\psi_j$ obeys 
\begin{equation}\label{hc31}
\varphi_{j,h}=J_hE_{H_h}(\cI)K_h\varphi_{j,h},\quad K_h\varphi_{j,h}=E_{H_h}(\cI)K_h\varphi_{j,h}.
\end{equation}
Define $\psi_{j,h}=K_h\varphi_{j,h}$, $j=1,2,\ldots,m$. In order to form an $m$-dimensional basis in the range of $E_{H_h}(\cI)$ the $\psi_{j,h}$'s have to be linearly independent. Suppose they are not, then the first equality in \eqref{hc31} implies that the $\varphi_{j,h}$'s are not linearly independent either; a contradiction. 

Finally, let us prove the estimates in \eqref{eigen-est}. First, due to \eqref{K-norm} and \eqref{hc302}, we get that the norm of $\psi_{j,h}-K_h\psi_j$ is of order $h^\gamma$. Also, because $\psi_j$ belongs to the domain of $H_0$ we have 
\begin{align*}
	\norm{(J_hK_h - I)\psi_j} \leq C\norm{(J_hK_h-I)(H_0+I)^{-1}}_{\cB(\cH)} \leq Ch^\gamma, \quad 0<h\leq h_0,
\end{align*}
where we used either Lemma~\ref{lemma-74} or Lemma~\ref{new-frac} (depending on the case in Theorem~\ref{thm54}) and $\gamma\leq\alpha$. Using \eqref{J-norm}, this leads to 
\begin{equation*}
1=\norm{\psi_j} \leq \norm{J_h}_{\cB(\cH_h,\cH)}\norm{K_h\psi_{j}}_{\cH_h} + \mathcal{O}(h^\gamma)\leq C \norm{K_h\psi_{j}}_{\cH_h}+\tfrac{1}{2},\quad 0<h\leq h_0,
\end{equation*}
for a small enough $h_0$, thus concluding the proof. 
\end{proof}

\begin{remark}\label{rmk57} \
	\begin{enumerate}[(i)] 
		\item If $\lambda$ is a simple eigenvalue, i.e.\ $m=1$, we get that $\psi_{1,h}$ is an eigenfunction corresponding to the eigenvalue $\lambda_{1,h}$.
		\item For each $n\geq n_0$ sufficiently large we take $a=\lambda-n^{-1}$ and $b=\lambda+n^{-1}$. This allows us to determine a sequence $\{h_n\}$, $h_n\to0$ as $n\to\infty$, and an eigenvalue $\lambda_{h_n}$  of $H_{h_n}$ such that $\lambda_{h_n}\to\lambda$ as $n\to\infty$. This result was proved in \cite[Theorem 23]{R} for a different class of potentials $V$, which has some overlap with our class.
	\end{enumerate}
\end{remark}

\paragraph*{Acknowledgments.} This research is partially supported by grant 8021--00084B from Independent Research Fund Denmark \textbar\ Natural Sciences.


\end{document}